\newcommand{\highlight}[1]{\colorbox{yellow}{#1}}
\newcommand{\cut}[1]{}
\DeclareMathOperator{\Res}{Res}
\numberwithin{equation}{section}
\newtheorem{theorem}{Theorem}[section]
\newtheorem{lemma}[theorem]{Lemma}
\newtheorem{corollary}[theorem]{Corollary}
\newtheorem{question}[theorem]{Question}
\theoremstyle{definition}
\theoremstyle{remark}
\newtheorem{remark}[theorem]{Remark}
\newcommand{\CC}{\mathbb{C}}
\newcommand{\RR}{\mathbb{R}}
\newcommand{\QQ}{\mathbb{Q}}
\newcommand{\ZZ}{\mathbb{Z}}
\newcommand{\OO}{\mathcal{O}}
\newcommand{\la}{\langle}
\newcommand{\ra}{\rangle}
\newcommand{\End}{\mbox{\rm End}}
\newcommand{\cal}{\mathcal}
\begin{document}
\title{Parabolic degeneration of rational Cherednik algebras}
\thanks{SG and AG acknowledge the financial support of Fondecyt grant 1110072.  The work of ML was partially supported by the DFG-Schwerpunkt Grant 1388. DJ was supported  by ANR grant  n. ANR-13-BS01-0001-01. SG and ML thank Arun Ram and the University of Melbourne for providing the very pleasant working environment where part of this research was carried out. DJ and ML are grateful to RIMS for the hospitality, MPI for providing remote memory access to their calculators and Ulrich Thiel for helpful correspondence during the implementation of some examples. We thank Iain Gordon for explaining the proof of Lemma \ref{hwo}.}

\author{Stephen Griffeth}
\address{Instituto de Matem\'atica y F\'isica, Universidad de Talca, Talca, Chile}
\email{sgriffeth@inst-mat.utalca.cl}

\author{Armin Gusenbauer} 
\address{Instituto de Matem\'atica y F\'isica, Universidad de Talca, Talca, Chile}
\email{armingk@inst-mat.utalca.cl}

\author{Daniel Juteau}
\address{Laboratoire de Math\'ematiques Nicolas Oresme,
Universit\'e de Caen Basse-Normandie, CNRS UMR 6139,
14032 Caen,
France}
\email{daniel.juteau@unicaen.fr}

\author{Martina Lanini}
\address{FAU Erlangen-N\"urnberg, Cauerstr. 11, 91058 Erlangen, Germany}
\email{lanini@math.fau.de}

\begin{abstract}
We introduce parabolic degenerations of rational Cherednik algebras of complex reflection groups, and use them to give necessary conditions for finite-dimensionality of an irreducible lowest weight module for the rational Cherednik algebra of a complex reflection group, and for the existence of a non-zero map between two standard modules. The latter condition reproduces and enhances, in the case of the symmetric group, the combinatorics of cores and dominance order, and in general shows that the $c$-ordering on category $\OO_c$ may be replaced by a much coarser ordering. The former gives a new proof of the classification of finite dimensional irreducible modules for the Cherednik algebra of the symmetric group.
\end{abstract}

\maketitle

\section{Introduction}

\subsection{} The purpose of this paper is to introduce a new tool, which we call \emph{parabolic degeneration}, for the study of rational Cherednik algebras. We apply it to give two necessary conditions: first, for the existence of a non-zero map $\Delta_c(\lambda) \rightarrow \Delta_c(\mu)$ between two standard modules, and second, for the finite dimensionality of the irreducible modules $L_c(\lambda)$ in category $\OO_c$. Our explicit computation of corank one parabolic degenerations of standard modules implies our necessary conditions.

A weak version of our criterion for finite dimensionality implies that for the Cherednik algebra of the symmetric group $S_n$ acting on its reflection representation, and with $c>0$, the only possible finite dimensional module is $L_c(\mathrm{triv})$. This fact combined with an analysis of the polynomial representation of the rational Cherednik algebra may be used to give a new proof of the theorem of Berest-Etingof-Ginzburg \cite{BEG} to the effect that the only finite dimensional module is $L_c(\mathrm{triv})$, occurring exactly when $c$ is a positive rational number with denominator $n$ (for negative $c$ it is the sign representation that shows up instead). In fact, our proof of the weak form of our theorem requires so little machinery that we give it here in the introduction.

\subsection{Data} In order to state our necessary conditions we fix some notation; for precise definitions we refer to section \ref{basics}. Let $G$ be a finite subgroup of $\mathrm{GL}(V)$, where $V$ is a finite dimensional $\CC$-vector space. Let
$$R=\{r \in G \ | \ \mathrm{codim}(\mathrm{fix}_V(r))=1 \}$$ be the set of reflections in $G$. The group $G$ is a \emph{reflection group} if it is generated by $R$. For each element $r \in R$ let $c_r \in \CC$ be a complex number, and assume $c_r=c_{g r g^{-1}}$ for all $g \in G$ and $r \in R$; we will often write simply $c$ for the collection of all the numbers $c_r$. These data produce a rational Cherednik algebra $H_c=H_{c}(G,V)$ and a full subcategory $\OO_c$ of $H_c$-mod. Every finite dimensional $H_c(G,V)$-module belongs to $\OO_c$. The most interesting case is for $G$ a reflection group, which we will assume throughout this article unless stated otherwise.
% for most of the paper.

The sum $\sum_{r \in R} c_r (1-r)$ is central in $\CC G$ and, fixing an index set $\Lambda$ for the irreducible $\CC G$-modules, for $\lambda \in \Lambda$ we write $c_\lambda$ for the scalar by which it acts on the irreducible representation $S^\lambda$ of $G$. Define a partial order $<_c$ on $\Lambda$ by $\mu >_c \lambda$ if $c_\mu-c_\lambda \in \ZZ_{>0}$. The category $\OO_c$ is a highest weight category with this ordering and with standard $\Delta_c(\lambda)$ and irreducible $L_c(\lambda)$ objects indexed by $\lambda \in \Lambda$. The finite dimensional irreducible $H_c(G,V)$-modules are, up to isomorphism, contained in the set of $L_c(\lambda)$'s.

\subsection{Maps between standard modules} In this subsection we suppose $\mathrm{dim}(V)=n$ and $G$ acts on $V$ without non-trivial fixed points, $V^G=0$. Define an equivalence relation on the set of points of $V$ by $p \simeq q$ if the stabilizer groups are equal, $G_p=G_q$. The equivalence classes for this relation are called \emph{strata}. Fix a collection of strata $V_j^\circ$ such that the closures in $V$ of the strata $V_j=\overline{V_j^\circ}$ form a complete flag $$V_\bullet=\left( 0=V_0 \subseteq V_1 \subseteq V_2 \subseteq \cdots \subseteq V_n=V \right)$$ of subspaces of $V$. Write $G_j=G_{p}$ for the stabilizer of a point $p \in V_j^\circ$, so that $G_0=G$ and $G_n=\{ 1\}$. For each irreducible representation $S^\lambda$ of $G$ we define a set $\mathrm{SYT}_{V_\bullet}(\lambda)$ as follows: an element $T \in \mathrm{SYT}_{V_\bullet}(\lambda)$ is a sequence $T=(\lambda=\lambda^0,\lambda^1,\dots,\lambda^n)$ where $S^{\lambda^i}$ is an irreducible representation of $G_j$ and $S^{\lambda^0,\lambda^1,\dots,\lambda^n} \neq 0$. Here we recursively define $S^{\lambda^0,\lambda^1,\dots,\lambda^j}$ as the isotypic component,  of isotype $S^{\lambda^j}$, of the restriction to $G_j$ of $S^{\lambda^0,\lambda^1,\dots,\lambda^{j-1}}$. Let $R_j=R \cap G_j$ be the set of reflections in $G_j$ and given $T \in \mathrm{SYT}_{V_\bullet}(\lambda)$, let $c_{T,n-j}$ be the scalar by which $$\sum_{r \in R_{j}} c_r r-\sum_{r \in R_{j+1}} c_r r$$ acts on $S^{\lambda^0,\lambda^1,\dots,\lambda^{j+1}}$.  As an example, if $G=S_{n+1}$, we may choose the flag to correspond to the chain $G_j=S_{n+1-j}$ for $j=0,\dots,n$, and in this case the set $\mathrm{SYT}_{V_\bullet}(\lambda)$ may be put in bijection with standard Young tableaux of shape $\lambda$ (this case motivates our notation), in such a way that $c_{T,k}=\mathrm{ct}(T^{-1}(k+1)) c$ is $c$ times the content of the box containing $k+1$. 

\begin{theorem} \label{standard map1}
If there is a non-zero map $\Delta_c(\lambda) \rightarrow \Delta_c(\mu)$ then for each flag $V_\bullet$ as above there are $T \in \mathrm{SYT}_{V_\bullet}(\lambda)$ and $U \in \mathrm{SYT}_{V_\bullet}(\mu)$ with $c_{U,j}-c_{T,j} \in \ZZ_{\geq 0}$ for all $j=1,2,\dots,n$.
\end{theorem}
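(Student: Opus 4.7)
The plan is to apply parabolic degeneration iteratively along the corank-one steps of the flag $V_\bullet$, reading off one pair of tableau entries and one inequality at each step. Since the flag has $n$ corank-one steps, one for each inclusion $G_j\supseteq G_{j+1}$, this produces the full tableaux $T$ and $U$ together with all the required inequalities.

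The key ingredient is the corank-one computation of the parabolic degeneration of a standard module, which is the main explicit calculation promised in the introduction. Fix a corank-one step $G_j\supseteq G_{j+1}$ of the flag. I expect the computation to identify the parabolic degeneration of a standard module $\Delta_c(\nu)$ with a direct sum of standard modules for the smaller parabolic Cherednik algebra, indexed by the irreducible constituents $\nu'$ of $\Res^{G_j}_{G_{j+1}}S^\nu$, each placed in the graded degree equal to the scalar by which the central element $\sum_{r\in R_j}c_r r-\sum_{r\in R_{j+1}}c_r r$ acts on the $\nu'$-isotypic component of $S^\nu$. This is precisely one of the scalars $c_{T,j}$ appearing in the statement.

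Given this description, I would finish by induction. A nonzero morphism $f:\Delta_c(\lambda)\to\Delta_c(\mu)$ degenerates to a nonzero graded morphism between the two degenerations; since both sides are direct sums of shifted standards, there must exist a pair of summands connected by a nonzero graded morphism $\Delta_c(\lambda^1)[a]\to\Delta_c(\mu^1)[b]$ for some constituents $\lambda^1$ of $\Res S^\lambda$ and $\mu^1$ of $\Res S^\mu$. The grading offset $b-a$ must be a nonnegative integer, because the grading on any standard module is bounded below and a nonzero morphism between standards can only raise the internal grading; this offset is precisely $c_{U,n}-c_{T,n}$ for the first entries $\lambda^1,\mu^1$ of $T$ and $U$. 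Iterating the same argument with the nonzero morphism $\Delta_c(\lambda^1)\to\Delta_c(\mu^1)$ and the next corank-one step, and continuing through the flag, produces $T\in\mathrm{SYT}_{V_\bullet}(\lambda)$ and $U\in\mathrm{SYT}_{V_\bullet}(\mu)$ together with all inequalities $c_{U,j}-c_{T,j}\in\ZZ_{\ge 0}$.

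The main technical hurdle will be the corank-one computation itself: constructing a flat degeneration of $H_c(G,V)$ adapted to the direction normal to the stratum, checking that standard modules extend to a flat family over the degeneration parameter, and identifying the special fibre explicitly as the direct sum indexed by isotypes with grading shifts given by the partial Dunkl sum $\sum_{r\in R_j}c_r r-\sum_{r\in R_{j+1}}c_r r$. Flatness is what ensures that a nonzero morphism remains nonzero after degeneration, which is what allows the induction to proceed, and the explicit identification of the shifts is the delicate link between the algebraic construction and the combinatorial scalars $c_{T,j}$.
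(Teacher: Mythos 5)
Your approach is essentially the one the paper uses. The paper first computes the corank-one parabolic degeneration of a standard module explicitly (Lemma \ref{degen sum}), then proves the recursive Theorem \ref{standard recursion2} by taking the initial term of the morphism with respect to the $I(S)$-adic filtration for a one-dimensional stratum $S$, and finally obtains Theorem \ref{standard map1} by induction on $\dim V$. Your plan is a faithful sketch of this, with two points worth flagging. First, you decompose the degeneration by $G_{j+1}$-isotypes, whereas the paper decomposes by $N_S$-isotypes and also carries the monodromy ($T$-)action; for Theorem \ref{standard map1} (which forgets the normalizer) your coarser decomposition suffices, but it is the $N_S$-data that yields the sharper Theorem \ref{standard recursion}. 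Second, your justification that the grading offset is a nonnegative integer (``a nonzero morphism between standards can only raise the internal grading'') elides a small but genuine two-step argument: the paper shows that the $I(S)$-adic degree $m$ of the initial term $\mathrm{in}(f)$ equals $c_\chi - c_\nu$ (because $\mathrm{in}(f)$ is $H_c(G_S,S^\perp)$-equivariant and hence preserves the Euler grading of the smaller algebra), and separately that $m \le c_\lambda - c_\mu$ (because the $I(S)$-adic order of a homogeneous polynomial of degree $d$ with respect to the ideal of a linear subspace cannot exceed $d$); the desired inequality is the difference of these. Your ``flat family'' framing and the paper's ideal-adic associated-graded construction are two presentations of the same degeneration to the normal cone, so the technical hurdle you identify is exactly what Lemma \ref{degen sum} and Theorem \ref{monodromy eigenvalues} discharge.
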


This theorem is a corollary of a more precise (and more complicated to apply in practice) recursive version that we will state next. Given a one-dimensional stratum $S$ let $G_S$ be the corresponding maximal parabolic subgroup of $G$ with reflections $R_S=R \cap G_S$, and let $N_S$ be the normalizer of $G_S$ in $G$. The group $N_S$ splits as $N_S=G_S \rtimes \la h \ra$ for an element $h \in G$, and we write $n_S$ for the order of $h$. This splitting is known to hold for real reflection groups thanks to \cite[Lemma 2]{How}, and has been verified as well, case by case, for the irreducible complex reflection groups in the thesis \cite{Mur}. We denote by $L$ the one dimensional representation of $N_S$ factoring through
$N_S / G_S$ on which $h$ acts by $e^{2 \pi i/n_S}$.  Let $S^\perp$ be the $G_S$-stable complement to $\overline{S}$ in $V$. For an irreducible $N_S$-module $S^\chi$, we let $c_\chi$ be the scalar by which $\sum_{r \in R_S} c_r(1-r)$ acts on $S^\chi$. Moreover, given any $G_S$-module $M$ we define an induced module $\Delta_c(M)$ in the same way as for irreducible modules.  
\begin{theorem} \label{standard recursion}
If there is a non-zero map $\Delta_c(\lambda) \rightarrow \Delta_c(\mu)$ then for each one-dimensional stratum $S$ there are irreducible representations $S^\chi$ and $S^\nu$ of $N_S$ with
$(c_\lambda - c_\mu) - (c_\chi - c_\nu) \in \ZZ_{\geq 0}$ and a non-zero map of $H_c(G_S,S^\perp) \rtimes N_S$-modules
\[
\Delta_c(S^{\chi} \otimes_\CC L^{\otimes ((c_\lambda - c_\mu) - (c_\chi - c_\nu))})
\rightarrow \Delta_c(S^{\nu}).
\]
\end{theorem}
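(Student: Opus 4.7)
The plan is to construct a \emph{parabolic degeneration functor} that converts the standard $H_c(G,V)$-module $\Delta_c(\lambda)$ into a module for the parabolic rational Cherednik algebra $H_c(G_S, S^\perp) \rtimes N_S$, and that preserves non-vanishing of morphisms between standards. Geometrically, one picks a point $p$ in the open stratum contained in $\overline{S}$ and degenerates toward $p$, so that in the limit the reflections lying outside $G_S$ (those whose reflecting hyperplanes do not contain $\overline{S}$) decouple from the rest of the action.

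First, I would carry out this degeneration on the polynomial side and identify the outcome: as an $H_c(G_S,S^\perp)\rtimes N_S$-module, the degeneration $\Delta_c(\lambda)^{\mathrm{deg}}$ should coincide with the induced module $\Delta_c\bigl(\mathrm{Res}^G_{N_S}(S^\lambda)\bigr)$. Decomposing $\mathrm{Res}^G_{N_S}(S^\lambda)$ into $N_S$-isotypic components of the form $S^\chi \otimes L^{\otimes k}$ gives
\[
\Delta_c(\lambda)^{\mathrm{deg}} \ = \ \bigoplus_{\chi,k} \Delta_c\bigl(S^\chi \otimes L^{\otimes k}\bigr)^{\oplus m_{\chi,k}}.
\]
A non-zero map $\Delta_c(\lambda) \to \Delta_c(\mu)$ should degenerate functorially to a non-zero map $\Delta_c(\lambda)^{\mathrm{deg}} \to \Delta_c(\mu)^{\mathrm{deg}}$ respecting both the $N_S$-action and the Euler grading, so it lands non-trivially between some summand $\Delta_c(S^\chi \otimes L^{\otimes k})$ on the source and some $\Delta_c(S^\nu \otimes L^{\otimes \ell})$ on the target; twisting both sides by $L^{\otimes(-\ell)}$ produces a non-zero map of the form claimed, with integer shift $k-\ell$.

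To identify $k - \ell$ with $(c_\lambda - c_\mu) - (c_\chi - c_\nu)$, I would track the Euler element $\sum_{r \in R} c_r(1-r)$: it decomposes as $\sum_{r \in R_S} c_r(1-r)$ plus the remaining reflection sum, which on the special fiber acts through an operator that diagonalizes on $N_S$-isotypes with integer eigenvalues counted by the $L$-power. Matching yields $c_\lambda - c_\chi,\ c_\mu - c_\nu \in \ZZ_{\geq 0}$ with difference equal to $k - \ell$, producing the stated integrality. The main obstacle will be making the parabolic degeneration itself precise --- constructing a flat family whose generic fiber is $\Delta_c(\lambda)$ and whose special fiber is an $H_c(G_S,S^\perp)\rtimes N_S$-module of the indicated form, and verifying that the induced map on Hom-spaces does not annihilate a non-zero morphism; accurately accounting for the way $h$ acts on the degeneration parameter, which is what produces the $L$-twists, is the delicate combinatorial heart of the argument.
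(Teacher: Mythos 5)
Your strategy matches the paper's: degenerate along $S$, identify the degeneration of a standard module as a direct sum of parabolic standards twisted by powers of $L$, and read off the numerics from the shifts. The paper's Lemma \ref{degen sum} supplies exactly the decomposition $\mathrm{Deg}_{S,p}(\Delta_c(\lambda))=\bigoplus_\chi \Delta_c(S^{\lambda,\chi})z^{c_\chi - c_\lambda}$ with $T$-action $e^{2\pi i(c_\lambda - c_\chi)/n_S}h$ on the $\chi$-summand, which is the precise version of your ``$\Delta_c(\lambda)^{\mathrm{deg}}\cong\Delta_c(\mathrm{Res}^G_{N_S}S^\lambda)$ decomposed into $N_S$-isotypes with $L$-twists.''

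The one genuine gap is the step you yourself flag as ``the main obstacle'': a non-zero map need \emph{not} degenerate to a non-zero map. The degeneration $M\mapsto \mathrm{Deg}_{S,p}(M)$ is only right exact, so applying it functorially to $f$ can kill $f$ (indeed it does whenever $f(S^\lambda)\subseteq I(S)\Delta_c(\mu)$, since then the associated graded of $f$ in degree $0$ vanishes). The paper does \emph{not} apply the functor to $f$; instead it defines the \emph{initial term} $\mathrm{in}(f)$ of $f$ along $S$: take $m$ to be the largest integer with $f(S^\lambda)\subseteq I(S)^m\Delta_c(\mu)$, and let $\mathrm{in}(f)$ be the induced map of polynomial degree $m$ on the degenerations. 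By maximality of $m$ this leading term is non-zero \emph{by construction}. In particular, the degenerated map does not have the same degree as $f$ --- its degree is $m$, not $c_\lambda - c_\mu$ --- and it is precisely the identity $m = c_\chi - c_\nu$ (read off from the $\OO_c(G_S,S^\perp)$-gradings on both sides) together with $m\leq c_\lambda - c_\mu$ that yields the integrality $(c_\lambda - c_\mu)-(c_\chi - c_\nu)\in\ZZ_{\geq 0}$ and determines the power of $L$ needed to make the map $N_S$-equivariant. So your plan survives if you replace ``the degeneration of $f$'' with ``the lowest-order non-vanishing graded piece of $f$ along $S$''; without that replacement the argument as stated would fail whenever $f$ vanishes to positive order along $S$.
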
 

By noting that $(c_\lambda - c_\mu) - (c_\chi - c_\nu)=c_{U,n}-c_{T,n}$ for certain SYT's $U$ and $T$, Theorem \ref{standard map1} follows from this by induction on dimension, forgetting about the action of $h$. Even in the case where $G=S_{n+1}$ is the symmetric group and the stratification is chosen to correspond to the tower $S_{n+1} \supseteq S_n \supseteq S_{n-1} \supseteq \cdots$ these theorems give a stronger result than what we could deduce, for example, from the results of \cite{Gri}. For instance, they imply that when $G=S_4$ and $c=1/3$ the only non-trivial map from a standard module to the polynomial representation has domain $\Delta_{1/3}((2,2))$.  In certain cases the more precise recursive version can be made into a closed form; in this paper we do so, in Theorem \ref{gr1n}, only for the monomial groups $G(r,1,n)$ and the chain $G(r,1,n) \supseteq G(r,1,n-1) \supseteq G(r,1,n-2) \supseteq \cdots$.

As somewhat larger example, suppose $G=S_8$ is the symmetric group acting on its reflection representation, $c=1/4$, and $S^\mu=\mathrm{triv}$ is the trivial one-dimensional representation. For any flag, there is only one possible $U$ as in the conclusion of Theorem \ref{standard map1}, which for the chain $S_8 \supseteq S_7 \supseteq S_6 \supseteq \cdots$ may be identified with the classical standard Young tableau $$U=\young(12345678).$$ For this chain, the possible standard Young tableaux $T$ that can arise are 
$$\young(123,456,78), \ \young(1234567,8), \  \young(1238,456,7).$$ By considering the chain of parabolics beginning $S_8 \supseteq S_6 \times S_2 \supseteq S_6 \supseteq S_5 \supseteq \cdots$ a calculation eliminates the last shape. According to Dunkl's classification \cite{Dun1}, \cite{Dun2}, the remaining two shapes are correspond to precisely the maps with target $\Delta_{1/4}(\mathrm{triv})$. In the case of the symmetric group, our theorem is not always quite so precise, but we do not know of examples except when $c=1/2$ when it fails to identify exactly the homomorphisms between standard modules. We would be surprised if these do not exist, however.

We remark that as a formal consequence of the fact that $\OO_c(G,V)$ is a highest weight category with BGG reciprocity, either one of these theorems implies that we may replace the $c$-ordering on $\OO_c$ by a much coarser ordering preserving the highest weight structure (see Lemma \ref{hwo}). Therefore the transitive closure of the relations in our theorems give necessary conditions for two irreducible objects in $\OO_c$ to be in the same block. Since $\OO_c$ is a highest weight cover of the module category of the finite Hecke algebra, one obtains a necessary condition for two irreducible Hecke algebra modules to belong to the same block. It should be interesting to determine the block structure more precisely; we do not know if the necessary condition for two standard modules to belong to the same block implied by Theorem \ref{standard recursion} is also sufficient (except for classical types, where it is). At least for the real exceptional groups, one can in principle check this against the known classification of blocks for finite Hecke algebras. 

\subsection{Finite dimensionality} Let $v \in V$ be a non-zero vector and let $S=\CC^\times v$ be the set of non-zero multiples of $v$ (we do not assume that $S$ is a stratum). As above, write $G_S$ for the stabilizer of $v$ in $G$ and $N_S=G_S \rtimes \la h \ra$ for the setwise stabilizer of $S$ in $G$, with $h$ of order $n_S$, and let $L$ be the one-dimensional $N_S$ module on which $G_S$ acts trivially and $h$ acts by $e^{2 \pi  i /n_S}$. 

Each $H_c(G,V)$-module $M$ is in particular a $\CC[V]$-module, and we define its \emph{support} $\mathrm{supp}(M)$ to be the zero set of its annihilator in $\CC[V]$. For example, $M$ is finite dimensional if and only if $\mathrm{supp}(M) = \{0\}$. 

\begin{theorem}\label{main}
Suppose $S=\CC^\times v$ for some non-zero vector $v \in V$. If $S \nsubseteq \mathrm{supp}(L_c(\lambda))$, then the restriction $\Res^G_{N_S} S^\lambda$ belongs to the Serre subcategory of $\CC N_S$-mod generated by the modules $L^{\otimes (c_\mu - c_\lambda)} \otimes \Res^G_{N_S} S^\mu$, for those $\mu$ such that $c_\mu - c_\lambda \in \ZZ_{> 0}$.
\end{theorem}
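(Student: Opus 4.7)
The strategy is to apply the corank-one parabolic degeneration functor at the ray $S = \CC^\times v$ — the same tool used to deduce Theorems \ref{standard map1} and \ref{standard recursion} — to $\Delta_c(\lambda)$ and compare with its composition series. The functor in question sends an $H_c(G,V)$-module to a module over a smaller algebra roughly of the shape $H_c(G_S, S^\perp) \rtimes N_S$, is exact on $\OO_c$, and crucially vanishes on any module whose support does not meet $S$. Since by hypothesis $S \nsubseteq \mathrm{supp}(L_c(\lambda))$, the degeneration of $L_c(\lambda)$ is zero.

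Next I would compute the degeneration of an arbitrary standard module $\Delta_c(\mu)$. Because $\Delta_c(\mu) = \CC[V] \otimes S^\mu$ as a $\CC[V] \rtimes G$-module, its degeneration at $S$ should carry a natural filtration, compatible with the $\CC^\times$-action on $S$, whose associated graded pieces are of the form $L^{\otimes k} \otimes \Res^G_{N_S} S^\mu$ as $\CC N_S$-modules. The exponent $k$ of $L$ records the $h$-eigenvalue shift produced by the degeneration, pinned down by the fact that the central element $\sum_{r \in R} c_r(1-r)$ acts on $S^\mu$ by the scalar $c_\mu$; the crucial normalization is that the degeneration of $\Delta_c(\mu)$ in the bottom layer yields $\Res^G_{N_S} S^\mu$ itself, with a shift by $L^{\otimes(c_\mu - c_\lambda)}$ relative to the bottom of the degeneration of $\Delta_c(\lambda)$.

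Now I would combine these computations. Taking a composition series of $\Delta_c(\lambda)$, whose top is $L_c(\lambda)$ and whose further constituents are $L_c(\mu)$ with $c_\mu - c_\lambda \in \ZZ_{>0}$, and applying the exact degeneration functor, the degeneration of $\Delta_c(\lambda)$ becomes an iterated extension of degenerations of such $L_c(\mu)$ (since the $L_c(\lambda)$-piece vanishes). Each $L_c(\mu)$ is a quotient of $\Delta_c(\mu)$, so its degeneration is a subquotient of that of $\Delta_c(\mu)$, whose $\CC N_S$-composition factors are among the constituents of $L^{\otimes(c_\mu - c_\lambda)} \otimes \Res^G_{N_S} S^\mu$. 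Since $\Res^G_{N_S} S^\lambda$ appears in the bottom layer of the degeneration of $\Delta_c(\lambda)$ via the canonical map $S^\lambda \hookrightarrow \Delta_c(\lambda)$, all its $\CC N_S$-constituents must come from this list, giving the Serre subcategory containment.

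The main obstacle is the second step: constructing the parabolic degeneration functor and computing its value on standard modules with the correct $L$-twist. The twist is delicate because it must match the shift in $h$-eigenvalue induced by the $\CC^\times$-grading transverse to $S$, and getting the exponent to come out exactly as $c_\mu - c_\lambda$ — rather than some off-by-one or off-by-$n_S$ variant — requires a careful bookkeeping of how the degenerated algebra acts on the leading term of a standard module, and in particular how the normalization by the character $L$ interacts with the splitting $N_S = G_S \rtimes \langle h \rangle$.
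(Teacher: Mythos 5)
Your overall strategy is the paper's: degenerate at $S$, use that the degeneration of $L_c(\lambda)$ vanishes because $S$ misses its support, and use right exactness together with the structure of $\mathrm{Rad}(\Delta_c(\lambda))$ to trap the degeneration of $\Delta_c(\lambda)$ inside subquotients of degenerations of the $\Delta_c(\mu)$ with $\mu >_c \lambda$. One small correction: the functor is only \emph{right} exact, not exact (the fiber $M \mapsto M(p)$ is a tensor product), so from a composition series of $\Delta_c(\lambda)$ you get a filtration of the degeneration whose layers are \emph{quotients} of the degenerations of the $L_c(\mu)$; this is enough, since a Serre subcategory is closed under quotients.

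The gap is the step you yourself flag, and it is worth pinning down that the paper's mechanism for the $L$-twist is not the one you sketch. The plain $\CC N_S$-module structure on the fiber of $\Delta_c(\mu)$ at a point of $S$ is literally $\Res^G_{N_S} S^\mu$, with no twist, and it does not carry a filtration with graded pieces $L^{\otimes k} \otimes \Res^G_{N_S} S^\mu$. What records the twist is the \emph{monodromy around the origin}: the fiber of $\Delta_c(\mu)|_S$ is a module over the stratum braid group $B_S$, generated by $G_S$ and a monodromy operator $T$ along a loop in $S \cong \CC^\times$, and Lemma \ref{stratum Dunkl} together with Theorem \ref{monodromy eigenvalues}(a) compute $T = e^{2\pi i(c_\mu - c_\eta)/n_S} h$ on the $S^\eta$-isotypic component. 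The twist by $L^{\otimes(c_\mu - c_\lambda)}$ emerges only upon \emph{comparing} $B_S$-modules: matching a $B_S$-constituent of the degeneration of $\Delta_c(\lambda)$, where $T = e^{2\pi i(c_\lambda - c_\nu)/n_S} h$, with one coming from $\Delta_c(\mu)$, where $T = e^{2\pi i(c_\mu - c_\eta)/n_S} h$, forces $c_\nu = c_\eta$ (from the underlying $G_S$-module) and then forces the two $N_S$-structures to differ by exactly $L^{\otimes(c_\mu - c_\lambda)}$. So the twist is relative, read off from the eigenvalues of $T$ on two fibers; without the explicit monodromy computation of Theorem \ref{monodromy eigenvalues}(a), which reduces via Lemma \ref{stratum Dunkl} to a rank-one ODE on the line $S$, the argument does not close.
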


Since a module is finite dimensional if and only if its support contains no line through the origin, we obtain a necessary condition for finite dimensionality. Checking this condition can be done using the characteristic zero representation theory of the finite group $G$. %However, the calculations may be involved, and in this paper we limit ourselves to the case where $S$ is a stratum for the action of %$G$ on $V$. 
Observing that $G_S$ is a subgroup of $N_S$ we obtain the following corollary, which is easier to check in practice.

\begin{corollary} \label{simplemain}
If the module $L_c(\lambda)$ is finite dimensional, then for each $S = \CC^\times v$ as in Theorem \ref{main}, the restriction $\Res^G_{G_S} S^\lambda$ belongs to the Serre subcategory of $\CC G_S$-mod generated by the modules $\Res^G_{G_S} S^\mu$, for those $\mu$ such that $c_\mu - c_\lambda \in \ZZ_{> 0}$.
\end{corollary}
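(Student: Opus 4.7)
The plan is to derive Corollary~\ref{simplemain} directly from Theorem~\ref{main} in essentially one move, reducing the $N_S$-statement to a $G_S$-statement by restriction. The heart of the argument is the trivial observation that finite dimensionality rules out \emph{every} line through the origin as a support line, combined with the fact that $L$ becomes trivial upon restriction to $G_S$.

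First, I would reformulate the hypothesis. A module $M$ is finite dimensional precisely when $\mathrm{supp}(M) = \{0\}$, so if $L_c(\lambda)$ is finite dimensional, then for any non-zero vector $v \in V$ the line $S = \CC^\times v$ satisfies $S \not\subseteq \mathrm{supp}(L_c(\lambda))$. In particular, Theorem~\ref{main} applies for every such $S$, giving that $\Res^G_{N_S} S^\lambda$ lies in the Serre subcategory $\mathcal{S}_{N_S}$ of $\CC N_S$-mod generated by the modules $L^{\otimes (c_\mu - c_\lambda)} \otimes \Res^G_{N_S} S^\mu$ for those $\mu$ with $c_\mu - c_\lambda \in \ZZ_{>0}$.

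Next, I would apply the restriction functor $\Res^{N_S}_{G_S}$. Since $G_S$ is a subgroup of $N_S$ and we are working over a field of characteristic zero, this functor is exact, and therefore sends any object of $\mathcal{S}_{N_S}$ into the Serre subcategory of $\CC G_S$-mod generated by the restrictions of the generators of $\mathcal{S}_{N_S}$. (In detail: an exact functor preserves kernels, cokernels, and short exact sequences, hence preserves the operations of passing to subobjects, quotients, and extensions that define membership in a Serre subcategory.) Thus $\Res^G_{G_S} S^\lambda = \Res^{N_S}_{G_S} \Res^G_{N_S} S^\lambda$ lies in the Serre subcategory of $\CC G_S$-mod generated by the modules $\Res^{N_S}_{G_S}\bigl(L^{\otimes (c_\mu - c_\lambda)} \otimes \Res^G_{N_S} S^\mu\bigr)$.

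Finally, I would use that by construction $G_S$ acts trivially on $L$, so $\Res^{N_S}_{G_S} L$ is the trivial module, whence
\[
\Res^{N_S}_{G_S}\bigl(L^{\otimes (c_\mu - c_\lambda)} \otimes \Res^G_{N_S} S^\mu\bigr) \;\cong\; \Res^G_{G_S} S^\mu.
\]
Substituting this into the previous step yields exactly the conclusion of Corollary~\ref{simplemain}. I do not anticipate a genuine obstacle here: all the work is contained in Theorem~\ref{main}, and the only subtle point is the (standard) assertion that exact functors carry Serre subcategories into the Serre subcategory generated by the image of any set of generators, which I would verify by a one-line induction on the length of a resolution by extensions of the generators.
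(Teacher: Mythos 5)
Your proof is correct, but it takes a different route from the one the paper actually writes out. The paper deliberately gives a self-contained argument that does \emph{not} invoke Theorem~\ref{main}: it applies the right-exact fiber functor $M \mapsto M(p) = \CC[V]/I(p) \otimes_{\CC[V]} M$ (with $p = v$, so $G_p = G_S$) to the exact sequence $0 \to \mathrm{Rad}(\Delta_c(\lambda)) \to \Delta_c(\lambda) \to L_c(\lambda) \to 0$, uses that the fiber of the finite-dimensional module $L_c(\lambda)$ at $p \neq 0$ vanishes, and then analyzes the fiber of the radical via its $L_c(\mu)$-filtration. The authors explicitly say they do this because the corollary then requires ``very little machinery,'' whereas Theorem~\ref{main} rests on the monodromy computations of Section~4. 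Your argument is the one the paper alludes to in the sentence immediately preceding the corollary (``Observing that $G_S$ is a subgroup of $N_S$ we obtain the following corollary''): you take Theorem~\ref{main} as given, apply the exact restriction functor $\Res^{N_S}_{G_S}$, and use that $L$ restricts to the trivial $G_S$-module so the twist by $L^{\otimes(c_\mu - c_\lambda)}$ disappears. Both proofs are sound; your reduction is shorter and cleaner if Theorem~\ref{main} is available, while the paper's version has the advantage of being independent of the heavier parabolic-degeneration apparatus. One small remark: the ``one-line induction'' you defer is even easier than you suggest in this setting, since $\CC G_S$-mod and $\CC N_S$-mod are semisimple, so the Serre subcategory generated by a set of modules is just the full subcategory of modules whose simple constituents occur in those generators, and exactness of restriction makes the compatibility immediate.
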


This corollary may be proved independently of the theorem; since it requires very little machinery we give the proof in the next subsection, so that this introduction constitutes a self-contained explanation of the fact that $L_c(\mathrm{triv})$ is the only possible finite dimensional simple $H_c(S_n,V)$-module when $c>0$.

\subsection{Proof of Corollary \ref{simplemain}} For a point $p \in V$ and a module $M$ for $H_c(G,V)$, we let $I(p)$ be the ideal of $p$ in $\CC[V]$ and write $M(p)=\CC[V] / I(p) \otimes_{\CC[V]} M$ for the fiber of $M$ at $p$, viewing $M$ as a $\CC[V]$-module only. Evidently $M(p)$ remembers very little of the structure of $M$, but it is at least a $G_p$-module. For example, for a standard module $\Delta_c(\eta)$, we get $\Delta_c(\eta)(p) = \CC[V] / I(p) \otimes_{\CC[V]} (\CC[V] \otimes S^\eta) = \Res^G_{G_p} S^\eta$.

The functor $M \mapsto M(p)$ is a right-exact functor from $\OO_c$ to the category $\CC G_p$-mod of finitely generated modules for $\CC G_p$.  If $0=M_0 \subseteq M_1 \subseteq \cdots \subseteq M_k=M$ is a filtration by submodules, then the fiber $M(p)$ is filtered by $\CC G_p$-submodules $$0=M_0(p) \subseteq \overline{M_1(p)} \subseteq \overline{M_2(p)} \subseteq \cdots \subseteq \overline{M_k(p)}=M(p)$$ where we have written $\overline{M_i(p)}$ for the image of $M_i(p)$ in $M(p)$ by the canonical map. By right exactness, for a quotient $M'/M''$ of $\CC[V]$-modules, the fiber $(M'/M'')(p)$ is isomorphic to the quotient of $M'(p)$ by the image of $M''(p)$ in $M'(p)$. Thus the layers of this filtration are quotients of the fibers $(M_{i+1}/M_i)(p)$ of the layers of the original filtration.

From the short exact sequence
$$0 \longrightarrow \mathrm{Rad}(\Delta_c(\lambda)) \longrightarrow \Delta_c(\lambda) \longrightarrow L_c(\lambda) \longrightarrow 0$$
we get the following exact sequence by applying the right exact functor $(-)(p)$, for any $p \neq 0$:
$$\mathrm{Rad}(\Delta_c(\lambda))(p) \longrightarrow \Res^G_{G_p} S^\lambda \longrightarrow 0$$
since the finite dimensional module $L_c(\lambda)$ is supported at $\{0\}$.

The module $\mathrm{Rad}(\Delta_c(\lambda))$ has a filtration with layers isomorphic to various $L_c(\mu)$'s, all with $\mu >_c \lambda$. The fiber $L_c(\mu)(p)$ is a quotient of
$\Delta_c(\mu)(p) = \Res^G_{G_p}S^\mu$. By the previous paragraph, it follows that $\mathrm{Rad}(\Delta_c(\lambda))(p)$ has a filtration with layers that are isomorphic to quotients of various $\Res^G_{G_p} S^\mu$'s, all with $\mu >_c \lambda$. For $p = v$, we have $G_p = G_S$. This finishes the proof.

\subsection{Finite dimensionals for symmetric group} We explain how to use the corollary in the case of the symmetric group $S_n$, where the classification of finite dimensional modules was already known. There is only one conjugacy class of reflections, the transpositions $(ij)$. We assume the corresponding number $c=c_r$ is positive and real (for nonreal $c$ it's easy to see that there are no finite dimensional modules, and the case of negative real $c$ is easily reduced to positive real $c$). The irreducible representations of $S_n$ are indexed by partitions $\lambda$ of $n$, and the $c$-function is given by
$$c_\lambda=c \left(\frac{n(n-1)}{2}- \sum_{b \in \lambda} \mathrm{ct}(b)\right),$$ where $\mathrm{ct}(b)$ denotes the content of the box $b$. Using only the corollary and classical facts about representations of $S_n$, we will prove:
\begin{corollary}
In the case of the symmetric group $S_n$ acting on its reflection representation $V = \{v \in \CC^n \ | \ v_1 + v_2 + \cdots + v_n=0 \}$, if $c>0$ and $L_c(\lambda)$ is finite dimensional then $\lambda=(n)$ is a single row and $nc \in \ZZ_{>0}$.
\end{corollary}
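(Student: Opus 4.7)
The plan is to apply Corollary \ref{simplemain} with two choices of stabilizer: the maximal parabolic $S_{n-1}$ when $\lambda = (n)$, and a two-block Young subgroup $S_{\lambda_k} \times S_{n-\lambda_k}$ when $\lambda \neq (n)$, where $\lambda_k$ denotes the last nonzero part of $\lambda$. In each case I will single out an irreducible constituent of $\Res^{S_n}_{G_S} S^\lambda$ that cannot appear in $\Res^{S_n}_{G_S} S^\mu$ for any admissible $\mu$, producing a contradiction.

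Suppose first $\lambda \neq (n)$, so $\lambda$ has $k \geq 2$ parts. Set $\tilde\lambda = (\lambda_1, \ldots, \lambda_{k-1}) \vdash n - \lambda_k$, and pick $v \in V$ whose first $\lambda_k$ coordinates take one value and whose remaining $n - \lambda_k$ coordinates take another (forced distinct by $\sum v_i = 0$ and $v \neq 0$); its stabilizer is $G_S = S_{\lambda_k} \times S_{n - \lambda_k}$. Since $\lambda/\tilde\lambda$ is the last row of $\lambda$, a horizontal strip of size $\lambda_k$, the Pieri rule together with the symmetry $c^\lambda_{\alpha\beta} = c^\lambda_{\beta\alpha}$ of Littlewood--Richardson coefficients gives $c^\lambda_{(\lambda_k),\tilde\lambda} = 1$. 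Consequently the external tensor product $W := S^{(\lambda_k)} \boxtimes S^{\tilde\lambda}$ (the trivial representation of $S_{\lambda_k}$ tensored with the $S_{n-\lambda_k}$-representation indexed by $\tilde\lambda$) occurs in $\Res^{S_n}_{G_S} S^\lambda$ with multiplicity one.

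The decisive step is to verify that $W$ does not occur in $\Res^{S_n}_{G_S} S^\mu$ for any $\mu \neq \lambda$ with $c_\mu > c_\lambda$. By Pieri again, $c^\mu_{(\lambda_k),\tilde\lambda} \neq 0$ exactly when $\mu/\tilde\lambda$ is a horizontal strip of size $\lambda_k$, i.e., $\tilde\lambda_i \leq \mu_i \leq \tilde\lambda_{i-1}$ for all $i$ and $|\mu| = n$. For such $\mu \neq \lambda$, at least one box of $\mu \setminus \tilde\lambda$ must sit in a row $i < k$ at column $j \geq \lambda_i + 1$, contributing content $j - i \geq \lambda_k - k + 2$; the correspondingly missing box of row $k$ of $\lambda$ has content at most $\lambda_k - k$. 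Summing over the displaced boxes yields $\sum_{b \in \mu} \mathrm{ct}(b) \geq \sum_{b \in \lambda} \mathrm{ct}(b) + 2$, hence $c_\mu < c_\lambda$. Thus $W$ lies in $\Res^{S_n}_{G_S} S^\lambda$ but in no $\Res^{S_n}_{G_S} S^\mu$ with $c_\mu - c_\lambda \in \ZZ_{>0}$, contradicting Corollary \ref{simplemain}. Therefore $\lambda = (n)$.

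It remains to apply Corollary \ref{simplemain} to $\lambda = (n)$ with $G_S = S_{n-1}$: here $\Res^{S_n}_{G_S} S^{(n)} = S^{(n-1)}$ is the trivial module, and the classical branching rule says $S^{(n-1)}$ occurs in $\Res^{S_n}_{G_S} S^\mu$ only for $\mu \in \{(n), (n-1, 1)\}$. The content formula then gives $c_{(n-1,1)} - c_{(n)} = nc$, so unless $nc$ is a positive integer no admissible $\mu$ exists, the Serre subcategory of Corollary \ref{simplemain} is zero, and the corollary fails. This forces $nc \in \ZZ_{>0}$. The main obstacle in the whole proof is the content-sum inequality of the third paragraph; everything else reduces to a routine use of Pieri's rule and the classical branching rule.
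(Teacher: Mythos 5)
Your proof is correct and follows essentially the same route as the paper: both arguments apply Corollary \ref{simplemain} with the maximal parabolic $S_{\lambda_k}\times S_{n-\lambda_k}$ (last-row length) and use Pieri's rule to see that $\lambda$ is $c$-maximal among partitions $\mu$ with $c^\mu_{(\lambda_k),\tilde\lambda}\neq 0$, then handle $\lambda=(n)$ with $G_S=S_{n-1}$ and the branching rule. The only difference is that you spell out the content-sum comparison explicitly, whereas the paper states it as a one-line observation.
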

\begin{proof} The maximal parabolic subgroups of $S_n$ are, up to conjugacy, all of the form $S_k \times S_{n-k}$; the corresponding stratum is the set of points $(u, u, \dots, u, v, v, \dots, v)$ with $k$ equal $u$'s and $n-k$ equal $v$'s with $u \neq v$. The restriction of $S^\lambda$ to $S_k \times S_{n-k}$ is given by
$$\mathrm{res}_{S_k \times S_{n-k}}^{S_n} S^\lambda=\bigoplus (S^\mu \otimes S^\nu)^{\oplus c_{\mu, \nu}^\lambda}$$ where $\mu$ runs over partitions of $k$, $\nu$ runs over partitions of $n-k$, and $c_{\mu,\nu}^\lambda$ are the Littlewood-Richardson numbers. The Pieri rule computes $c^\lambda_{\mu,\nu}$ in the special case when one of $\mu$ or $\nu$ is a single row: if $\mu = (k)$ is a single row of length $k$, then $c^\lambda_{(k),\nu}$ is zero unless $\lambda$ is obtained from $\nu$ by adding $k$ boxes, no two in the same column. In particular, if $\lambda$ is a partition other than $(n)$ with last row of length $k$ (so $k<n$), $\mu$ is a single row of length $k$ and $\nu$ is the partition of $n-k$ obtained from $\lambda$ by removing its last row, then the Pieri rule implies that $\lambda$ is maximal in $c$-order among those partitions with $c^\lambda_{(k),\nu} \neq 0$: any other placement of $k$ boxes along the border of $\nu$, no two in the same column, increases the sum of the contents. In other words, $S^\lambda$ contains $S^{(k)} \otimes S^\nu$ in its restriction to $S_k \times S_{n-k}$ and is maximal in the $c$-order with this property. Using Corollary~\ref{simplemain} this implies that $L_c(\lambda)$ is not finite dimensional.

Here we have used only that $c$ is positive. Analyzing what can happen for the trivial module gives the condition $nc \in \ZZ$ as follows: consider the restriction with  $G_S=S_{n-1} \times S_1=S_{n-1}$. The only partitions $\lambda$ such that $\mathrm{res}^{S_n}_{S_{n-1}} S^\lambda$ contains the trivial representation are $(n)$ and $(n-1,1)$. Then the condition that $(n) <_c (n-1,1)$ is precisely $nc \in \ZZ_{>0}$.
\end{proof} This corollary reduces the classification of finite dimensional modules for the $S_n$ Cherednik algebra to the problem of classifying the values of $c$ for which $L_c(\mathrm{triv})$ is finite dimensional; as of this writing we know of three essentially different ways to do this: Etingof's analysis of the relationship with the Macdonald-Mehta integral, Varagnolo-Vasserot's approach through affine flag varieties, and Cherednik's technique of intertwining operators.

\section{Cherednik algebras} \label{basics}

This section reviews standard facts about Cherednik algebras. 

\subsection{Definition of the Cherednik algebra} Suppose $V$ is a finite dimensional $\CC$-vector space and $G \subseteq \mathrm{GL}(V)$ is a finite group of linear transformations of $V$. A \emph{reflection} is an element $r \in G$ whose fixed space is of codimension exactly $1$. A \emph{reflecting hyperplane} for $G$ is the fixed space of a reflection $r \in G$. We write $R$ for the set of reflections in $G$, $\mathcal{A}$ for the set of reflecting hyperplanes of $G$ on $V$, and for each element $H \in \mathcal{A}$, let $\alpha_H \in V^*$ be a linear form with kernel $H$.

Fix a $G$-invariant function on the set of reflections in $G$, $r \mapsto c_r$, and for each reflection $s$ we write $\alpha_r=\alpha_H$ if $H$ is the fixed space of $s$. The \emph{rational Cherednik algebra} for these data is the subalgebra $H_c(G,V)$ of $\mathrm{End}_\CC(\CC[V])$ generated by $\CC[V]$, the group $G$, and for each $y \in V$ a \emph{Dunkl operator}
$$y(f)=\partial_y(f)-\sum_{r \in R} c_r \la \alpha_r,y \ra \frac{f-r(f)}{\alpha_r} \quad \hbox{for $f \in \CC[V]$.}$$ The Dunkl operators commute with one another, and the \emph{Poincar\'e-Birkhoff-Witt theorem} for $H_c(G,V)$ asserts that multiplication induces an isomorphism of vector spaces
$$\CC[V] \otimes_\CC \CC G \otimes_\CC \CC[V^*] \stackrel{\cong}{\longrightarrow} H_c(G,V).$$

The algebra $H_c(G,V)$ is graded with $V^*$ in degree $1$, the group $G$ in degree $0$, and the space $V$ of Dunkl operators in degree $-1$. This grading is actually internal: the Euler element 
\[
\mathrm{eu} = \sum_{1\leq j \leq n} x_j \frac{\partial}{\partial x_j}
			= \sum_{1\leq j \leq n} x_j y_j + \sum_{r \in R} c_r (1-r)
			\in H_c(G,V),
\]
where $(x_1, \dots, x_n)$ and $(y_1, \dots, y_n)$ are dual bases of $V^*$ and $V$, satisfies
\[
[\mathrm{eu}, x] = x,\ [\mathrm{eu}, y] = -y,\text{ and }[\mathrm{eu}, g]=0
\qquad (\forall x \in V^*,\ \forall y \in V,\ \forall g \in G).
\]

\cut{
 arising from the adjoint action of the \emph{Euler element} $\mathrm{eu}=\sum_{1\leq j \leq n} x_j \frac{\partial}{\partial x_j}$, where $(x_1, \dots, x_n)$ is an arbitrary basis of $V$. This element lies in $H_c(G,V)$ and may be written in terms of the generators as
\begin{equation}
 \mathrm{eu} =
  \sum x_i y_i+\sum_{r \in R} c_r (1-r),
  \quad \text{with} 
  \quad [\mathrm{eu},x]=x, \ [\mathrm{eu},y]=-y, \ \text{and} \ [\mathrm{eu},g]=0
\end{equation}
for all $x \in V^*$, $y \in V$, and $g \in G$, where $(y_1, \dots, y_n)$ is the basis of $V^*$ dual to $(x_1,\dots,x_n)$.
}

\subsection{Standard modules} The subalgebra of $H_c(G,V)$ generated by $G$ and the Dunkl operators is isomorphic to the twisted group ring $\CC[V^*] \rtimes G$, and we inflate each $\CC G$-module $M$ to a $\CC[V^*] \rtimes G$-module, so that the Dunkl operators act by $0$ on $M$. Fix an index set $\Lambda$ for the isomorphism classes of irreducible $\CC G$-modules, and for $\lambda \in \Lambda$ write $S^\lambda$ for the corresponding irreducible module. The \emph{standard modules} for the Cherednik algebra are given by
$$\Delta_c(\lambda)=\mathrm{Ind}_{\CC[V^*] \rtimes G}^{H_c(G,V)} S^\lambda.$$ In fact, the PBW theorem implies that there is an isomorphism  of $\CC$-vector spaces $\Delta_c(\lambda) \cong \CC[V] \otimes_\CC S^\lambda$, and the $H_c(G,V)$-module structure is given by $\CC[V] \rtimes G$ acting in the obvious way, and with the Dunkl operators acting by
$$y(f \otimes u)=\partial_y(f) \otimes u - \sum_{r \in R} c_r \la \alpha_r,y \ra \frac{f-r(f)}{\alpha_r} \otimes r(u) \quad \hbox{for all $y \in V$, $f \in \CC[V]$, and $u \in S^\lambda$.}$$

Category $\mathcal{O}_c(G,V)$ is the full subcategory of $H_c(G,V)$-mod consisting of modules that are locally nilpotent for the action of the Dunkl operators and finitely generated over the polynomial subalgebra $\CC[V]$. As such each module in $\mathcal{O}_c(G,V)$ gives rise to a $G$-equivariant coherent sheaf on $V$. The  Euler element $\mathrm{eu}$ acts locally finitely on each module in $\OO_c(G,V)$. It follows that each module $M \in \OO_c(G,V)$ is a graded $H_c(G,V)$-module, with grading defined by the finite dimensional subspaces
$$M^d=\{m \in M \ | \ \hbox{there is $N \in \ZZ_{>0}$ with $(\mathrm{eu}-d)^N m=0$} \}.$$ Evidently the morphisms in $\OO_c(G,V)$ are all degree $0$ with respect to this grading. The grading on the standard modules is given by
$$\Delta_c(\lambda)=\bigoplus_{d \in \ZZ_{\geq 0}} \CC[V]^d \otimes S^\lambda,$$ with $\CC[V]^d$ the space of polynomials of degree $d$ and $\CC[V]^d \otimes S^\lambda$ in degree $d+c_\lambda$ where $c_\lambda$ is the scalar by which $\sum c_r (1-r)$ acts on $S^\lambda$.

Given a complex number $z \in \CC$, we write $\OO_c^{> z}$ for the full subcategory consisting of objects $M$ of $\OO_c$ such that $M^d=0$ unless $d-z \in \ZZ_{>0}$.

The standard module $\Delta_c(\lambda)$ has a unique irreducible quotient $L_c(\lambda)=\Delta_c(\lambda)/ \mathrm{Rad}(\Delta_c(\lambda))$, and the radical belongs to $\OO_c^{>c_\lambda}$,
\begin{equation} \label{radical bound}
\mathrm{Rad}(\Delta_c(\lambda)) \in \OO_c^{>c_\lambda}.
\end{equation} If $M$ is a finite dimensional irreducible $H_c(G,V)$-module, then $M \cong L_c(\lambda)$ for some $\lambda \in \Lambda$.
\section{Parabolic degeneration}

In this section we explain the degeneration procedure for Cherednik algebra modules we will use, emphasizing the monodromy representations that are produced. Our approach also produces an alternative construction of Bezrukavnikov-Etingof parabolic restriction; though we don't use this we explain it in \ref{BezEt}. Some of the results we obtain are parallel to those obtained by Bellamy-Ginzburg in section 8 of \cite{BeGi}; in particular, part (a) of our Theorem \ref{parabolic Dunkl formula} is closely related to their Theorem 8.4.5. The technical details of the proofs are different---in particular, we avoid the centralizer algebra construction in favor of a more geometric point of view, and we emphasize explicit formulas in order to calculate eigenvalues of monodromy. 

We have found the papers \cite{GoMa} and \cite{Wil} very inspirational. In fact, the constructions in this section become more natural when placed in the general context treated in the first half of Wilcox's paper: Etingof's sheaf of Cherednik algebras on a variety with a finite group action. We have avoided this generality and the additional notation it requires because it is not strictly necessary for obtaining our results. We have, however, kept our notation consistent with Etingof's: when $X$ is a variety (for us, always affine) with action of a finite group $G$, the notation $H_c(G,X)$ always means the Cherednik algebra for $G$ acting on $X$.

\subsection{Stratification} Define an equivalence relation on $V$ by $p \simeq q$ if $G_p=G_q$. The equivalence classes for this equivalence relation are called \emph{strata}. Given a stratum $S$, its closure $\overline{S}$ consists of points whose stabilizer group contains the stabilizer group of a generic point.

Suppose $G$ is generated by $R$. By Theorem 1.5 of \cite{Ste}, each group $G_p$ is generated by $R \cap G_p$. It follows that each stratum $S$ is a hyperplane complement in its closure, $$S=\overline{S} \setminus \bigcup_{\substack{H \in \mathcal{A} \\ S \not\subseteq H}} H.$$ Algebraically, $S$ is the locally closed subvariety of $V$ defined by
$$S=\{v \in V \ | \ \alpha_H(v)=0 \quad \hbox{for all $H \in \mathcal{A}$ with $S \subseteq H$ and} \prod_{\substack{H \in \mathcal{A} \\ S \not\subseteq H}} \alpha_H(v) \neq 0 \}$$

\subsection{The \'etale pullback associated to a subset} Let $S \subseteq V$ be a subset, let
$$G_S= \bigcap_{p \in S} G_p \subseteq G$$ be the pointwise stabilizer of $S$ and let $$N_S=\{g \in G \ | \ g(S) \subseteq S \}$$ be the setwise stabilizer of $S$. Let $$R_S=R \cap G_S$$ be the set of reflections in $G_S$ and let $\mathcal{A}_S$ be the set of reflecting hyperplanes for elements of $R_S$ (these are precisely the hyperplanes in $\mathcal{A}$ containing the set $S$). For each coset $g N_S$ of $N_S$ in $G$ (abbreviated by $g=gN_S$ when confusion will not result), let $$V_g=V \setminus \bigcup_{H \notin g(\mathcal{A}_S)} H$$ be the complement in $V$ of the set of reflecting hyperplanes for elements not in $g G_Sg^{-1}$. Finally, let $$X_S=\coprod_{g \in G/N_S} V_g \subseteq G/N_S \times V$$ be the disjoint union of the spaces $V_g$. The group $G$ acts on $G/N_S \times V$ diagonally by the formula $g(hN_S,v)=(ghN_S,gv)$, and the (open) subvariety $X_S$ is $G$-stable. The projection on the second factor gives a $G$-equivariant \'etale map $p$ from $X_S$ into $V$, corresponding to the diagonal embedding
$$p^*:\CC[V] \rightarrow \CC[X_S]=\bigoplus_{g \in G/N_S} \CC[V_g].$$

Writing $V^\circ$ and $X_S^\circ$ for the sets of points with trivial stabilizer in $G$, we have $p(X_S^\circ)=V^\circ$: if $v \in H$ for some $H \in g(\mathcal{A_S})$ then the point $(g,v)$ is stabilized by any reflection with fixed space $H$.

\subsection{Differential operators} We write $D(V^\circ)$ for the ring of differential operators on $V^\circ$ and similarly write $D(X_S^\circ)$ for the ring of differential operators on $X_S^\circ$. The \'etale map $p:X_S^\circ \rightarrow V^\circ$ induces a homomorphism
$$p^*:D(V^\circ) \rtimes G \rightarrow D(X_S^\circ) \rtimes G,$$ given on generators by the formulas
$$\partial_y \mapsto \partial_y \quad \text{and} \quad f \mapsto f \circ p,$$ where we have written $\partial_y$ for the differential operator on $X_S$ corresponding to the constant vector field associated to $y \in V$. Evidently $H_c(G,V)$ may be regarded as the subalgebra of $D(V^\circ) \rtimes G$ generated by Dunkl operators, the group $G$, and the ring $\CC[V]$ of functions on $V$.

\subsection{The rational Cherednik algebra for $X_S$} The rational Cherednik algebra for $G$ acting on $X_S$ is the subalgebra $H_c(G,X_S)$ of $D(X_S^\circ) \rtimes G$ generated by $\CC[X_S]$, the group $G$, and for each $y \in V$ a Dunkl operator given by the same formula as before:
$$y(f)=\partial_y(f)-\sum_{r \in R} c_r \la \alpha_r,y \ra \frac{f-r(f)}{\alpha_r} \quad \hbox{for $f \in \CC[X_S]$.}$$ Here we have interpreted $\alpha_r$ as an element of $\CC[X_S]$ via the diagonal embedding as before; as such it is a generator for the ideal of functions vanishing on the fixed point subvariety of $s$ acting on $X_S$ (this subvariety may be empty, in which case $\alpha_r$ is a unit and our claim is still true). The Cherednik algebra $H_c(G,V)$ sits inside $H_c(G,X_S)$ as the subalgebra generated by Dunkl operators and the diagonally embedded copy of $\CC[V]$. The PBW theorem for $H_c(G,X_S)$ is
\begin{theorem}
The Dunkl operators commute, and multiplication induces isomorphisms of $\CC[X_S]$-modules
$$\CC[X_S] \otimes_\CC \CC G \otimes_\CC \CC[V] \cong H_c(G,X_S) \cong \CC[X_S] \otimes_{\CC[V]} H_c(G,V).$$  
\end{theorem}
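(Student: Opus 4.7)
The plan is to reduce both isomorphisms to the ordinary PBW theorem for $H_c(G,V)$, using that $p : X_S^\circ \to V^\circ$ is $G$-equivariant and étale. First I would observe that the induced pullback $p^* : D(V^\circ) \rtimes G \to D(X_S^\circ) \rtimes G$ is an injective algebra homomorphism and, since $p^*$ is the diagonal embedding on $\CC[V]$, sends $\partial_y \mapsto \partial_y$, and is $G$-equivariant, it carries each Dunkl operator for $V$ to the Dunkl operator for $X_S$ defined by the same formula. In particular the Dunkl operators on $X_S^\circ$ commute because those on $V^\circ$ do, and $p^*$ restricts to an embedding $H_c(G,V) \hookrightarrow H_c(G,X_S)$ of subalgebras of $D(X_S^\circ) \rtimes G$.

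Combining this with the inclusion $\CC[X_S] \hookrightarrow H_c(G,X_S)$, multiplication in $H_c(G,X_S)$ defines a left-$\CC[X_S]$-linear map
\[
\mu : \CC[X_S] \otimes_{\CC[V]} H_c(G,V) \longrightarrow H_c(G,X_S),
\]
which is surjective since its image contains $\CC[X_S]$, $\CC G$, and every Dunkl operator. Using the classical PBW decomposition $H_c(G,V) \cong \CC[V] \otimes_\CC \CC G \otimes_\CC \CC[V^*]$ as a left $\CC[V]$-module, together with the fact that $\CC[X_S] = \bigoplus_{g \in G/N_S} \CC[V_g]$ is a direct sum of localizations of $\CC[V]$ (and hence flat), the domain of $\mu$ identifies canonically with $\CC[X_S] \otimes_\CC \CC G \otimes_\CC \CC[V^*]$, which is the other presentation appearing in the theorem.

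For injectivity of $\mu$, I would compose with the inclusion $H_c(G,X_S) \hookrightarrow D(X_S^\circ) \rtimes G$ and pass to the associated graded for the order filtration (in which elements of $\CC G$ have order $0$ and each Dunkl operator has order $1$ with principal symbol the corresponding element of $V$). The graded map becomes
\[
\CC[X_S] \otimes_\CC \CC G \otimes_\CC \mathrm{Sym}(V) \longrightarrow \CC[X_S^\circ] \otimes_\CC \CC G \otimes_\CC \mathrm{Sym}(V),
\]
which is injective because $\CC[X_S] \hookrightarrow \CC[X_S^\circ]$ is. Since the filtration is exhaustive, injectivity of the associated graded forces injectivity of $\mu$, completing both isomorphisms.

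The main technical obstacle, though routine, is verifying that the Dunkl operator formula actually defines endomorphisms of $\CC[X_S]$, namely that $\alpha_r$ divides $f-r(f)$ in $\CC[X_S]$ for every reflection $r$. This reduces to a case analysis on the action of $r$ on the components $V_g$: when $r$ preserves $V_g$ the difference $f-r(f)$ vanishes on the fixed locus of $r$ in $V_g$, while when $r$ carries $V_g$ to a different component the fixed hyperplane of $r$ has been deleted from the relevant components, so $\alpha_r$ becomes a unit there and the divisibility is automatic. Once this point is handled, the rest is a formal consequence of the PBW theorem for $H_c(G,V)$.
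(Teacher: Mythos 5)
Your proof is correct and follows the same strategy as the paper: deduce commutativity of the new Dunkl operators from that of the classical ones via the étale pullback $p^*$, show surjectivity of $\mu \colon \CC[X_S]\otimes_{\CC[V]}H_c(G,V)\to H_c(G,X_S)$ from the commutation relation, and then use flatness of $\CC[X_S]$ over $\CC[V]$ plus the classical PBW decomposition to get the first tensor-product description. The one genuine divergence is your injectivity argument. You pass to the associated graded of the order filtration on $D(X_S^\circ)\rtimes G$, reduce to the inclusion $\CC[X_S]\otimes\CC G\otimes\mathrm{Sym}(V)\hookrightarrow\CC[X_S^\circ]\otimes\CC G\otimes\mathrm{Sym}(V)$, and then conclude injectivity of $\mu$ because the filtration is exhaustive and bounded below. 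The paper instead argues directly: the map $\mu$ is the restriction of the base-change isomorphism $\CC[X_S^\circ]\otimes_{\CC[V^\circ]}(D(V^\circ)\rtimes G)\xrightarrow{\sim}D(X_S^\circ)\rtimes G$ attached to the étale map $p$, and an injective composite $\CC[X_S]\otimes_{\CC[V]}H_c(G,V)\hookrightarrow\CC[X_S^\circ]\otimes_{\CC[V]}(D(V^\circ)\rtimes G)\xrightarrow{\sim}D(X_S^\circ)\rtimes G$ forces $\mu$ injective. The paper's route is shorter and avoids the (correct, but worth spelling out) verification that $\mu$ is strictly filtered and that the tensor functor $\CC[X_S]\otimes_{\CC[V]}(-)$ commutes with passage to associated graded, which uses flatness once more; your route has the advantage of being a standard PBW-style argument that generalizes without change to any such degeneration. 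Two small remarks: the domain identification you want is $\CC[X_S]\otimes_\CC\CC G\otimes_\CC\CC[V^*]$, where $\CC[V^*]=\mathrm{Sym}(V)$ is polynomials in the Dunkl operators (the theorem statement has a typo writing $\CC[V]$); and the divisibility of $f-r(f)$ by $\alpha_r$ in $\CC[X_S]$ that you verify at the end, while a worthwhile sanity check, is not actually used since $H_c(G,X_S)$ is defined as a subalgebra of $D(X_S^\circ)\rtimes G$ rather than of $\End_\CC(\CC[X_S])$, so the Dunkl operators lie there automatically as images under $p^*$.
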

\begin{proof}
This is an easy consequence of the analogous facts for the rational Cherednik algebra $H_c(G,V)$. The first part of this theorem, that the Dunkl operators commute, is a consequence of the fact that the usual Dunkl operators commute: the Dunkl operators for $G$ acting on $X_S$ are the images under the homomorphism $D(V^\circ) \rtimes G \rightarrow D(X_S^\circ) \rtimes G$ of the usual Dunkl operators. Identifying $H_c(G,V)$ with a subalgebra of $D(X_S^\circ) \rtimes G$ via the composition
$$H_c(G,V) \hookrightarrow D(V^\circ) \rtimes G \hookrightarrow D(X_S^\circ) \rtimes G $$ it follows that $H_c(G,X_S)$ is the subalgebra of $D(X_S^\circ) \rtimes G$ generated by $H_c(G,V)$ and $\CC[X_S]$. From the commutation formula
$$yf-fy=\partial_y(f)-\sum_{r \in R} c_r \la \alpha_r,y \ra \frac{f-r(f)}{\alpha_r} r$$ we see that multiplication $\CC[X_S] \otimes_{\CC[V]} H_c(G,V) \rightarrow H_c(G,X_S)$ is surjective. It is injective because it is the restriction of the isomorphism $$\CC[X_S^\circ] \otimes_{\CC[V]} (D(V^\circ) \rtimes G)=\CC[X_S^\circ] \otimes_{\CC[V^\circ]} (D(V^\circ) \rtimes G) \rightarrow D(X_S^\circ) \rtimes G.$$ \end{proof}

\subsection{Localization of $H_c(G,V)$-modules} For an $H_c(G,V)$-module $M$ we have
$$\mathrm{Ind}_{H_c(G,V)}^{H_c(G,X_S)}(M)=\CC[X_S] \otimes_{\CC[V]} M.$$ Since $\CC[X_S]$ is a flat $\CC[V]$-module, it follows that induction defines an exact functor from $H_c(G,V)$-mod to $H_c(G,X_S)$-mod.

For each $g \in G/N_S$, let $e_g \in \CC[X_S]$ be the characteristic function of $V_g \subseteq X_S$ and abbreviate $e=e_1$. The equations $1=\sum_{g \in G/N_S} e_g$ and $h e_g h^{-1}=e_{hg}$ for $g \in G/N_S$ and $h \in G$ imply that $H_c(G,X_S) e H_c(G,X_S)=H_c(G,X_{S})$, and hence by Morita theory the functor $M \mapsto eM$ defines an equivalence $H_c(G,X_S) \mathrm{-mod} \cong e H_c(G,X_S) e \mathrm{-mod}$. Writing $V_S=V \setminus \bigcup_{H \notin \mathcal{A_S}} H$ it follows that the composition of this idempotent slice functor with induction is given by
$$e \mathrm{Ind}_{H_c(G,V)}^{H_c(G,X_S)}(M)=\CC[V_S] \otimes_{\CC[V]} M.$$

\subsection{The Cherednik algebra $H_c(N_S,V_S)$} Recall that according to our notation, $V_S=V \setminus \bigcup_{H \notin \mathcal{A_S}} H$ is the complement in $V$ of the set of reflecting hyperplanes for reflections in $R \setminus R_S$. The Cherednik algebra $H_c(N_S,V_S)$ of $N_S$ acting on $V_S$ is the subalgebra of $\mathrm{End}_\CC(\CC[V_S])$ generated by $\CC[V_S]$, $N_S$, and for each $y \in V$, a Dunkl operator $y_S$ defined by
$$y_S(f)=\partial_y f-\sum_{r \in R_S} c_r \la \alpha_r,y \ra \frac{f-r(f)}{\alpha_r} \quad \hbox{for all $f \in \CC[V_S]$.}$$

\begin{theorem} \label{parabolic Dunkl formula}
\item[(a)] There is an equality $H_c(N_S,V_S) = eH_c(G,X_S)e$ (both are subalgebras of $\mathrm{End}_\CC(\CC[V_S])$).
\item[(b)] For $y \in V$, we have the formula
$$y_S=eye-\sum_{r \in (R \setminus R_S) \cap N_S} c_r \la \alpha_r,y \ra \frac{1}{\alpha_r} r e+\sum_{r \in R \setminus R_S} c_r \la \alpha_r,y \ra \frac{1}{\alpha_r}e.$$
\end{theorem}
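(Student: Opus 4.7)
My plan is to prove (b) by direct computation and then derive (a) from it. I would evaluate $eye$ on a generic $f \in \CC[V_S]$, viewed as $fe \in \CC[X_S]$ supported on the component $V_1 = V_S$. Applying the Dunkl formula on $X_S$ and using that $e$ is locally constant (so $\partial_y(e)=0$) yields
\[
y(fe) = \partial_y(f)\, e - \sum_{r \in R} c_r \la \alpha_r, y\ra \, \frac{fe - r(f)\, r(e)}{\alpha_r}.
\]
The key observations are: (i) $r(e)=e$ precisely when $r \in N_S$, and (ii) for $r \notin N_S$ the reflection $r$ sends $V_S$ into a different component of $X_S$, so $e \cdot r(e) = 0$. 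Multiplying on the left by $e$ and partitioning $R = R_S \sqcup ((R\setminus R_S)\cap N_S) \sqcup (R\setminus N_S)$, the contribution from $r \in R_S$ combines with $\partial_y(f)$ to produce $y_S(f)\, e$; solving for $y_S$ and recombining via $R\setminus R_S = ((R\setminus R_S)\cap N_S)\sqcup(R\setminus N_S)$ yields the formula in (b).

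\textbf{Proof of (a).} The inclusion $H_c(N_S, V_S) \subseteq eH_c(G,X_S)e$ is immediate from (b): $\CC[V_S] = e\CC[X_S]e$, each $n \in N_S$ commutes with $e$ and hence acts as $ene$, and each correction term in (b) is a product of $(1/\alpha_r) \in \CC[V_S]$ with either $re$ for $r \in N_S$ (where $re = ere$) or $e$ itself, all visibly in $eH_c(G,X_S)e$. For the reverse inclusion, I would filter $H_c(G, X_S)$ and $H_c(N_S, V_S)$ by Dunkl degree (Dunkl operators in degree $1$; $\CC[X_S]$, $\CC G$, $\CC[V_S]$, $\CC N_S$ in degree $0$). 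By PBW, $\gr H_c(G,X_S) = (\CC[X_S]\otimes\CC[V])\rtimes G$, in which Dunkl symbols commute with functions; a brief computation there gives $e\bar g e = \bar g \cdot e$ for $g\in N_S$ and $0$ otherwise, hence
\[
e \cdot \gr H_c(G, X_S) \cdot e = (\CC[V_S]\otimes\CC[V])\rtimes N_S = \gr H_c(N_S, V_S).
\]
Combined with the first inclusion this forces equality of associated gradeds, so by the standard principle that an inclusion of filtered algebras inducing an isomorphism of associated gradeds is itself an isomorphism, I conclude $eH_c(G, X_S)e = H_c(N_S, V_S)$.

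\textbf{Main obstacle.} The chief technical subtlety is justifying that the natural map $\gr(eH_c(G,X_S)e) \to e\cdot\gr H_c(G,X_S)\cdot e$ is an isomorphism and not merely an injection; this reduces to the identity $F_{n-1}H_c(G,X_S)\cap eH_c(G,X_S)e = eF_{n-1}H_c(G,X_S)e$, itself a consequence of PBW. A secondary point is tracking which denominators $1/\alpha_r$ are well-defined on $V_S$---namely those with $r \notin R_S$---which matches exactly the index set appearing in (b).
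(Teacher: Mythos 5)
Your proof of part (b) is essentially the paper's: evaluate $eye$ on $f\in\CC[V_S]$, use $\partial_y(e)=0$ and the trichotomy $R = R_S \sqcup ((R\setminus R_S)\cap N_S)\sqcup(R\setminus N_S)$ driven by the fact that $e\cdot r(e)=e$ for $r\in N_S$ and $e\cdot r(e)=0$ for $r\notin N_S$. The paper carries out the identical partition and bookkeeping.

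For part (a) you diverge on the reverse inclusion. The paper argues by induction on the polynomial degree of $\varphi$ in a typical PBW word $ef\varphi ge$: it rewrites $efy\varphi ge = efe[e,y]\varphi ge + efeye\varphi ge$, notes $[e,y]\in\CC[X_S]\rtimes G$, and reduces to strictly lower Dunkl degree. You instead filter both algebras by Dunkl degree, use $eF_ne = F_n\cap eH_c(G,X_S)e$ to identify $\gr(eH_c(G,X_S)e)$ with $e\,\gr H_c(G,X_S)\,e$, compute that this is $(\CC[V_S]\otimes\mathrm{Sym}(V))\rtimes N_S = \gr H_c(N_S,V_S)$, and invoke the standard fact that a filtered injection inducing an isomorphism on associated gradeds is an isomorphism. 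The two arguments are morally the same (the paper's induction is a hands-on version of your filtration passage), but yours packages the inductive step into a single application of the associated-graded principle and makes the role of PBW more transparent; the paper's is more elementary and self-contained. The point you flag as the chief subtlety, $\gr(eH_c(G,X_S)e)\cong e\,\gr H_c(G,X_S)\,e$, is indeed the one thing that must be said, and it is easy: since $e\in F_0$ one has $e F_n e = F_n\cap eH_c(G,X_S)e$ for all $n$, from which the isomorphism on associated gradeds follows directly. One notational slip: where you write $\CC[V]$ in the associated graded you mean $\mathrm{Sym}(V)=\CC[V^*]$, the polynomial algebra on the Dunkl generators; but this does not affect the argument.
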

\begin{proof} We prove (b) first.
Calculate, for $f \in \CC[V_S]$,
\begin{align*}
&\left( eye-\sum_{r \in  (R \setminus R_S) \cap N_S} c_r \la \alpha_r,y \ra \frac{1}{\alpha_r} r e \right) (f)=ey(f)-\sum_{r \in  (R \setminus R_S) \cap N_S} c_r \la \alpha_r,y \ra \frac{1}{\alpha_r} r(f) \\
&=e\partial_y(f)-e\sum_{r \in R} c_r \la \alpha_r,y \ra \frac{f-r(f)}{\alpha_r}-\sum_{r \in  (R \setminus R_S) \cap N_S} c_r \la \alpha_r,y \ra \frac{1}{\alpha_r} r(f) \\
&=\partial_y(f)-e\sum_{r \in R \cap N_S} c_r \la \alpha_r,y \ra \frac{f-r(f)}{\alpha_r}-e\sum_{r \in R \setminus N_S} c_r \la \alpha_r,y \ra \frac{f-r(f)}{\alpha_r}-\sum_{r \in  (R \setminus R_S) \cap N_S} c_r \la \alpha_r,y \ra \frac{1}{\alpha_r} r(f) \\
&=\partial_y(f)-\sum_{r \in R \cap N_S} c_r \la \alpha_r,y \ra \frac{f-r(f)}{\alpha_r}- \sum_{r \in R \setminus N_S} c_r \la \alpha_r,y \ra \frac{f}{\alpha_r}-\sum_{r \in  (R \setminus R_S) \cap N_S} c_r \la \alpha_r,y \ra \frac{1}{\alpha_r} r(f) \\
&=\partial_y(f)-\sum_{r \in  (R \setminus R_S) \cap N_S} c_r \la \alpha_r,y \ra \frac{f-r(f)}{\alpha_r} -\sum_{r \in R_S} c_r \la \alpha_r,y \ra \frac{f-r(f)}{\alpha_r} \\
&-\sum_{r \in R \setminus N_S} c_r \la \alpha_r,y \ra \frac{f}{\alpha_r}-\sum_{r \in  (R \setminus R_S) \cap N_S} c_r \la \alpha_r,y \ra \frac{1}{\alpha_r}r(f) \\
&=\partial_y(f)-\sum_{r \in  (R \setminus R_S) \cap N_S} c_r \la \alpha_r,y \ra \frac{f}{\alpha_r}-\sum_{r \in R_S} c_r \la \alpha_r,y \ra \frac{f-r(f)}{\alpha_r}-\sum_{r \in R \setminus N_S} c_r \la \alpha_r,y \ra \frac{f}{\alpha_r} \\
&=\partial_y(f)-\sum_{r \in R_S} c_r \la \alpha_r,y \ra \frac{f-r(f)}{\alpha_r}-\sum_{r \in R \setminus R_S} c_r \la \alpha_r,y \ra \frac{f}{\alpha_r}=y_S(f)-\sum_{r \in R \setminus R_S} c_r \la \alpha_r,y \ra \frac{f}{\alpha_r}.
\end{align*}
This proves (b), and the containment $H_c(N_S,V_S) \subseteq eH_c(G,X_S)e$.

To prove the reverse containment, and hence part (a), we proceed by induction on the polynomial degree of $\varphi$ to prove that any expression of the form $e f \varphi g e$, with $f \in \CC[X_S]$, $\varphi \in \CC[V^*]$, and $g \in G$, is contained in $H_c(N_S,V_S)$. If the degree of $\varphi$ is zero, then since $ef = fe$ and $e g e=0$ unless $g \in N_S$, we have $e f g e \in H_c(N_S,V_S)$. Hence to prove the general case it is enough to prove that $e f y \varphi g e \in H_c(N_S,V_S)$ for $y\in V$ and $\varphi\in\CC[V^*]$. Using $e^2=e$ and $ef = fe$, we compute
\begin{align*}
e f y \varphi g e&=e f e^2 y \varphi g e=e f e \left([e,y]+ye \right)\varphi g e \\
&=e f e [e,y] \varphi g e+e fe y e \varphi g e.
\end{align*}
Since $[e,y] \in \CC[X_S] \otimes \CC G$, the induction hypothesis implies that both of these last terms are in $H_c(N_S,V_S)$, and this completes the proof.
\end{proof}
The next lemma explicitly computes the action of the Dunkl operators $y_S$ on the restrictions of the standard modules. The formula is crucial for the proof of Theorem \ref{monodromy eigenvalues}.
\begin{lemma} \label{stratum Dunkl} The action of the Dunkl operators $y_S$ on the localization of the standard module $\Delta_c(\lambda)|_{V_S}=\CC[V_S] \otimes_\CC S^\lambda$ is by
$$y_S(f \otimes u)=\partial_y(f) \otimes u-\sum_{r \in R_S}c_r \la \alpha_r,y \ra \frac{f-r(f)}{\alpha_r} \otimes r(u)-\sum_{r \in R \setminus R_S}c_r \la \alpha_r,y \ra \frac{f}{\alpha_r} \otimes (r(u)-u)).$$
\end{lemma}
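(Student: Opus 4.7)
The plan is to deduce the formula directly from Theorem \ref{parabolic Dunkl formula}(b), which expresses $y_S$ as
\[
y_S = eye - \sum_{r \in (R \setminus R_S) \cap N_S} c_r \la \alpha_r, y \ra \frac{1}{\alpha_r} r e + \sum_{r \in R \setminus R_S} c_r \la \alpha_r, y \ra \frac{1}{\alpha_r} e,
\]
and then to evaluate each summand on $f \otimes u \in \CC[V_S] \otimes_\CC S^\lambda$ inside the induced module $\CC[X_S] \otimes_\CC S^\lambda$. The one subtlety is to keep careful track of how the idempotent $e = e_{1 N_S}$ interacts with $r(f)$ for $r \in R$, since $r \cdot e_{hN_S} \cdot r^{-1} = e_{rh N_S}$ implies that $r$ moves a function on $V_S = V_{1N_S}$ to a function on $V_{rN_S}$. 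Consequently, when $r \notin N_S$ the element $r(f)$ lies in a different component of $\CC[X_S]$ and is killed by $e$, while $\alpha_r$ is automatically a unit on $V_S$ (because $r \notin N_S$ forces $r \notin G_S$, so $\ker \alpha_r \notin \mathcal{A}_S$).

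Concretely, I would first compute $eye(f \otimes u)$ by applying the standard Dunkl formula on $\CC[X_S] \otimes S^\lambda$ and then projecting via $e$. Splitting the sum over $R$ according to whether $r \in R_S$, $r \in (R \setminus R_S) \cap N_S$, or $r \in R \setminus N_S$, one finds
\begin{align*}
eye(f \otimes u) &= \partial_y(f) \otimes u - \sum_{r \in R_S} c_r \la \alpha_r, y \ra \frac{f - r(f)}{\alpha_r} \otimes r(u) \\
 &\quad - \sum_{r \in (R \setminus R_S) \cap N_S} c_r \la \alpha_r, y \ra \frac{f - r(f)}{\alpha_r} \otimes r(u) - \sum_{r \in R \setminus N_S} c_r \la \alpha_r, y \ra \frac{f}{\alpha_r} \otimes r(u).
\end{align*}
The other two summands are immediate: since $r \in N_S$ for the second term we have $re(f \otimes u) = r(f) \otimes r(u)$, giving the contribution $-\sum_{r \in (R \setminus R_S) \cap N_S} c_r \la \alpha_r, y \ra \frac{r(f)}{\alpha_r} \otimes r(u)$; and the third term contributes $\sum_{r \in R \setminus R_S} c_r \la \alpha_r, y \ra \frac{f}{\alpha_r} \otimes u$.

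It remains to add these contributions. In the sum over $(R \setminus R_S) \cap N_S$, writing $f - r(f) = f - r(f)$ and combining with the $\frac{r(f)}{\alpha_r}$ term collapses everything to a $\frac{f}{\alpha_r}$ piece; together with the $R \setminus N_S$ contribution this reassembles to a single sum $-\sum_{r \in R \setminus R_S} c_r \la \alpha_r, y \ra \frac{f}{\alpha_r} \otimes r(u)$, using that $R \setminus R_S$ partitions as $((R \setminus R_S) \cap N_S) \sqcup (R \setminus N_S)$. Pairing this with the final $+ \sum_{r \in R \setminus R_S} c_r \la \alpha_r, y \ra \frac{f}{\alpha_r} \otimes u$ yields exactly $-\sum_{r \in R \setminus R_S} c_r \la \alpha_r, y \ra \frac{f}{\alpha_r} \otimes (r(u)-u)$, completing the claimed formula. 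The only real obstacle is the careful bookkeeping of which terms survive the idempotent $e$; once the component structure of $\CC[X_S]$ is invoked, everything reduces to elementary rearrangement.
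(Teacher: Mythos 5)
Your proposal is correct and follows essentially the same route as the paper: both apply Theorem \ref{parabolic Dunkl formula}(b), evaluate $eye$ via the ambient Dunkl formula on $\CC[X_S]\otimes S^\lambda$ using the fact that $e$ kills $r(f)$ for $r\notin N_S$ while $\alpha_r$ is a unit for $r\notin R_S$, and then recombine the three pieces $R_S$, $(R\setminus R_S)\cap N_S$, and $R\setminus N_S$. The paper performs the partition of $R$ in two successive steps ($R = (R\cap N_S)\sqcup(R\setminus N_S)$ first, then $R\cap N_S = R_S\sqcup((R\setminus R_S)\cap N_S)$) whereas you pass directly to the three-way split, but this is only a cosmetic difference in bookkeeping.
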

\begin{proof} Compute
\[
\begin{array}{rcl}
y_S(f \otimes u)
&=&\displaystyle
\left(eye-\sum_{r \in  (R \setminus R_S) \cap N_S} c_r \la \alpha_r,y \ra \frac{1}{\alpha_r}re+\sum_{r \in R \setminus R_S} c_r \la \alpha_r,y \ra \frac{1}{\alpha_r}e\right)(f \otimes u) \\
\\
&=&\displaystyle
ey(f \otimes u)-\sum_{r \in  (R \setminus R_S) \cap N_S} c_r \la \alpha_r,y \ra \frac{r(f)}{\alpha_r} \otimes r(u)+\sum_{r \in R \setminus R_S}c_r \la \alpha_r,y \ra \frac{f}{\alpha_r} \otimes u \\
&=&\displaystyle
\partial_y(f) \otimes u-e\sum_{r \in R} c_r \la \alpha_r,y \ra \frac{f-r(f)}{\alpha_r} \otimes r(u)-\sum_{r \in  (R \setminus R_S) \cap N_S}c_r \la \alpha_r,y \ra \frac{r(f)}{\alpha_r} \otimes r(u) \\
&&\displaystyle
+\sum_{r \in R \setminus R_S}c_r \la \alpha_r,y \ra \frac{f}{\alpha_r} \otimes u \\
&=&\displaystyle
\partial_y(f) \otimes u- \sum_{r \in R \cap N_S}c_r \la \alpha_r,y \ra \frac{f-r(f)}{\alpha_r} \otimes r(u)-\sum_{r \in R \setminus N_S}c_r \la \alpha_r,y \ra \frac{f}{\alpha_r} \otimes r(u) \\
&&\displaystyle
-\sum_{r \in  (R \setminus R_S) \cap N_S}c_r \la \alpha_r,y \ra \frac{r(f)}{\alpha_r} \otimes r(u)+\sum_{r \in R \setminus R_S}c_r \la \alpha_r,y \ra \frac{f}{\alpha_r} \otimes u \\
&=&\displaystyle
\partial_y(f) \otimes u-\sum_{r \in R_S}c_r \la \alpha_r,y \ra \frac{f-r(f)}{\alpha_r} \otimes r(u)-\sum_{r \in  (R \setminus R_S) \cap N_S}c_r \la \alpha_r,y \ra \frac{f}{\alpha_r} \otimes r(u) \\
&&\displaystyle
-\sum_{r \in R \setminus N_S}c_r \la \alpha_r,y \ra \frac{f}{\alpha_r} \otimes r(u)+\sum_{r \in R \setminus R_S}c_r \la \alpha_r,y \ra \frac{f}{\alpha_r} \otimes u \\
&=&\displaystyle
\partial_y(f) \otimes u-\sum_{r \in R_S}c_r \la \alpha_r,y \ra \frac{f-r(f)}{\alpha_r} \otimes r(u)-\sum_{r \in R \setminus R_S}c_r \la \alpha_r,y \ra \frac{f}{\alpha_r} \otimes r(u) \\
&&\displaystyle
+\sum_{r \in R \setminus R_S}c_r \la \alpha_r,y \ra \frac{f}{\alpha_r} \otimes u \\
&=&\displaystyle
\partial_y(f) \otimes u-\sum_{r \in R_S}c_r \la \alpha_r,y \ra \frac{f-r(f)}{\alpha_r} \otimes r(u)-\sum_{r \in R \setminus R_S}c_r \la \alpha_r,y \ra \frac{f}{\alpha_r} \otimes (r(u)-u)).\end{array}
\]
\end{proof}

\subsection{Degeneration to the normal bundle}

Let $I_S$ be the ideal of $S$ in $\CC[V_S]$ where as above $V_S=V \setminus \bigcup_{H \notin \mathcal{A}_S} H$. Given a $\CC[V_S]$-module $M$ we define the associated graded module for the $I_S$-adic filtration by
$$\mathrm{gr}_{I_S}(M)=\bigoplus_{k \in \ZZ_{\geq 0}} I_S^k M/ I_S^{k+1} M.$$ This is the \emph{degeneration of $M$ to the normal bundle of $S$ in $V_S$}.

We will assume from now on that $M$ is finitely generated as a $\CC[V_S]$-module (for us, the most important case is when $M$ is the restriction to $V_S$ of a module in $\OO_c(G,V)$). We write $S^\perp$ for the vector subspace of $V$ generated by the images of $1-r$ for all $r \in R_S$; this is also the orthogonal complement of $\overline S$ with respect to any $G_S$-invariant positive definite Hermitian form on $V$, or the sum of the non-trivial $G_S$-submodules of $V$.
\begin{lemma}
\begin{enumerate}
\item[(a)] If $M$ is a finitely generated as a $\CC[V_S]$-module, then $\mathrm{gr}_{I_S}(M)$ is finitely generated as a $\mathrm{gr}_{I_S}(\CC[V_S])$-module, and each graded component $\mathrm{gr}^j_{I_S}(M)$ is a finitely generated $\mathrm{gr}^0_{I_S}(\CC[V_S])=\CC[S]$-module.
\item[(b)] Let $U$ be the span of the set $\alpha_r$ for $r \in R_S$. The canonical maps $$\CC[S] \otimes_\CC \mathrm{Sym}^d U \rightarrow  \mathrm{gr}^d_{I_S} \CC[V_S]$$ are vector space isomorphisms, and fit together to give an $N_S$-equivariant algebra isomorphism
$$\CC[S \times S^\perp]=\CC[S] \otimes_\CC \CC[S^\perp] \rightarrow \mathrm{gr}_{I_S} \CC[V_S].$$
\end{enumerate}
\end{lemma}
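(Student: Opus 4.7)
My plan is to prove (b) first, by reducing to a regular-sequence calculation in the polynomial ring $\CC[V]$ and then transferring through flat localization to $\CC[V_S]$. The geometric input is that $\overline{S}$ is a linear subspace of $V$: since $G_S$ is generated by the reflections in $R_S$ (Steinberg's theorem), one has $\overline{S} = V^{G_S} = \bigcap_{H \in \mathcal{A}_S} H$, and a $G_S$-stable complement $S^\perp$ is obtained as the sum of the non-trivial $G_S$-isotypic summands of $V$. Dualizing, $V^* = \overline{S}^* \oplus U$ with $U = (S^\perp)^*$, and a dimension count confirms that this $U$ agrees with the one in the lemma: $\mathrm{span}\{\alpha_r : r \in R_S\} = \mathrm{ann}(\overline{S}) = U$. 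Choosing a basis $y_1, \ldots, y_m$ of $U$ realizes $\CC[V] = \CC[\overline{S}][y_1, \ldots, y_m]$; the ideal $I \subset \CC[V]$ of $\overline{S}$ is generated by the regular sequence $y_1, \ldots, y_m$, and so $I^d/I^{d+1}$ is canonically $\CC[\overline{S}] \otimes \mathrm{Sym}^d U$, the degree-$d$ piece in the $y$-grading.

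To pass from $\CC[V]$ to $\CC[V_S]$, I note that $\CC[V_S]$ is the flat localization of $\CC[V]$ at $W = \prod_{H \notin \mathcal{A}_S} \alpha_H$, and that no $\alpha_H$ with $H \notin \mathcal{A}_S$ vanishes on $\overline{S}$; hence $W$ maps to a nonzero element of $\CC[\overline{S}]$ and $\CC[S] = \CC[\overline{S}][W^{-1}]$. Flat base change yields $I_S^d = I^d \otimes_{\CC[V]} \CC[V_S]$, and since $I^d/I^{d+1}$ is killed by $I$ it is already a $\CC[\overline{S}]$-module, so $I_S^d/I_S^{d+1} \cong (I^d/I^{d+1}) \otimes_{\CC[\overline{S}]} \CC[S] \cong \CC[S] \otimes \mathrm{Sym}^d U$. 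Summing over $d$ gives the desired algebra isomorphism $\CC[S] \otimes \CC[S^\perp] \to \mathrm{gr}_{I_S}(\CC[V_S])$; explicitly, a pure tensor $f \otimes y_{i_1} \cdots y_{i_d}$ is sent to the class of $\tilde f \cdot y_{i_1} \cdots y_{i_d}$ for any lift $\tilde f \in \CC[V_S]$ of $f$. For the $N_S$-equivariance, the decomposition $V = \overline{S} \oplus S^\perp$ is $N_S$-stable because $N_S$ normalizes $G_S$ and so permutes its isotypic components on $V$, in particular fixing the trivial isotype $\overline{S}$ and hence its complement.

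Part (a) then follows quickly. If $m_1, \ldots, m_k$ generate $M$ over $\CC[V_S]$, then $I_S^j M = I_S^j m_1 + \cdots + I_S^j m_k$, so $\mathrm{gr}_{I_S}(M)$ is generated as a $\mathrm{gr}_{I_S}(\CC[V_S])$-module in degree $0$ by the images of the $m_i$. Each graded component $\mathrm{gr}^j(M)$ is then a sum of $k$ quotients of $\mathrm{gr}^j_{I_S}(\CC[V_S]) = \CC[S] \otimes \mathrm{Sym}^j U$, which by (b) is a finite free $\CC[S]$-module; hence $\mathrm{gr}^j(M)$ is finitely generated over $\CC[S]$.

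I do not foresee any serious obstacle. The most delicate step is the localization in (b), where I need that tensoring $I^d/I^{d+1}$ over $\CC[\overline{S}]$ with $\CC[S]$ reproduces $I_S^d/I_S^{d+1}$. This follows by combining flat base change ($I_S^d = I^d \otimes_{\CC[V]} \CC[V_S]$) with the fact that $I^d/I^{d+1}$ is killed by $I$ and so is already a $\CC[\overline{S}]$-module; this observation is the one genuinely substantive commutative-algebra input to the argument.
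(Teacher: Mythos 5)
Your argument is correct, and it follows the same essential route as the paper's proof, but fills in significantly more detail. For part (b) the paper's entire proof is the single line
\[
I_S^d = \CC[V_S]\,U^d = \CC[S]\,U^d \quad (\mathrm{mod}\ I_S^{d+1}),
\]
which is exactly the surjectivity statement you establish; you make the underlying mechanism explicit by first verifying the identification $I^d/I^{d+1}\cong\CC[\overline S]\otimes\mathrm{Sym}^d U$ in the polynomial ring via the regular sequence $y_1,\dots,y_m$, and then transporting it through flat localization to $\CC[V_S]$ using the observation that $I^d/I^{d+1}$ is already a $\CC[\overline S]$-module. That localization step is the one genuinely substantive commutative-algebra point, and your treatment of it is clean. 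For part (a) the paper simply cites Proposition~5.2 of Eisenbud, while you give the direct generators-in-degree-zero argument, which is essentially the content of that citation and is valid.

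One remark worth keeping in mind: your argument (and indeed the lemma itself) uses that $U=\mathrm{ann}(\overline S)$, i.e.\ that $\overline S=V^{G_S}$. You correctly flag this and justify it by Steinberg's theorem, which is the situation in which the paper actually applies the lemma (namely when $S$ is a stratum); it is good that you made the hypothesis explicit, since the surrounding section of the paper also considers sets $S=\CC^\times v$ that need not be strata, and there the statement would require $j=0$ anyway.
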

\begin{proof}
The fact that $\mathrm{gr}_{I_S}(M)$ is finitely generated is Proposition 5.2 from \cite{Eis}; the fact that its graded components are finitely generated $\CC[S]$-modules is a consequence of this, proving (a). Part (b) is proved if we observe that 
$$I_S^d=\CC[V_S] U^d=\CC[S] U^d \quad \hbox{mod $I_S^{d+1}$.}$$
\end{proof}

\subsection{Parabolic degeneration of $H_c(N_S,V_S)$} Filter $H_c(N_S,V_S)$ by subspaces $F^k$, for $k \in \ZZ$ running over all integers, given by the formula
\[
F^k = \{a \in H_c(N_S,V_S) \ | \ a I_S^j H_c(N_S,V_S) \subseteq I_S^{j+k} H_c(N_S,V_S) \quad \hbox{for all $j \in \ZZ_{\geq 0}$} \}.
\]
Then $F^{k+1} \subseteq F^k$ and $F^j F^k \subseteq F^{j+k}$, so we define the associated graded algebra 
\[
\mathrm{gr}_F(H_c(N_S,V_S))=\bigoplus_{k \in \ZZ} F^k / F^{k+1}.
\]
 It acts on $\mathrm{gr}_{I_S}(M)$ for each $H_c(N_S,V_S)$-module $M$, and evidently a morphism $M \rightarrow N$ of $H_c(N_S,V_S)$-modules gives rise to a morphism $\mathrm{gr}_{I_S} (M) \rightarrow \mathrm{gr}_{I_S} (N)$ of the corresponding $\mathrm{gr}_F(H_c(N_S,V_S))$-modules. We have thus a right-exact functor $M \mapsto \mathrm{gr}_{I_S}(M)$. 

Recall the notation $S^\perp$ and $U$ from last subsection, and let $U' = (V^*)^{G_S}$, so that
\[
V = \overline{S} \oplus S^\perp \quad \text{and} \quad V^* = U\oplus U'.
\]
We will use the following abuses of notation to denote certain elements of $\mathrm{gr}_F(H_c(N_S,V_S))$:
\begin{enumerate}
\item[(a)] For each $r \in R_S$, we will again write $\alpha_r \in F^1/F^2$ for its image,
\item[(b)] for each $y \in S^\perp$, we will again write $y \in F^{-1}/F^0$ for its image, 
\item[(c)] for each $g \in N_S$ we will again write $g \in F^0 / F^1$ for its image,
\item[(d)] for each $y \in \overline{S}$ we will again write $y \in F^0/F^1$ for its image,
\item[(e)] for each $x \in U'$ and each $\alpha_r^{-1}$ with $r \notin R_S$ we will again write $x,\alpha_r^{-1} \in F^0 / F^1$ for the images modulo $F^1$.
\end{enumerate}
The operators from (a), (b), and (c) define a homomorphism
\[
H_c(G_S,S^\perp) \rightarrow \mathrm{gr}_F(H_c(N_S,V_S)),
\]
while those from (c), (d), and (e) define a homomorphism 
\[
D(S) \rtimes N_S \rightarrow \mathrm{gr}_F(H_c(N_S,V_S)).
\] 

We will not use it in the sequel, but we note that we have the following version of the PBW theorem for $\mathrm{gr}_F(H_c(N_S,V_S))$:
\begin{theorem}
Multiplication induces a vector space isomorphism
$$\CC[S^\perp] \otimes_\CC (D(S) \rtimes N_S) \otimes_\CC \mathrm{Sym}(S^\perp) \cong \mathrm{gr}_F(H_c(N_S,V_S)).$$
\end{theorem}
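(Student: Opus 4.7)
The plan is to use the PBW theorem for $H_c(N_S,V_S)$ to identify a natural filtration on a tensor product of vector spaces with $F$, and then compute the associated graded directly. Combining the PBW theorem for $H_c(G,X_S)$ with the identification $H_c(N_S,V_S)=eH_c(G,X_S)e$ of Theorem~\ref{parabolic Dunkl formula}(a) and the explicit Dunkl formula of Theorem~\ref{parabolic Dunkl formula}(b), multiplication induces a vector space isomorphism
\[
\mu:\ \CC[V_S]\otimes_\CC \CC N_S\otimes_\CC \mathrm{Sym}(V) \xrightarrow{\ \cong\ } H_c(N_S,V_S),
\]
where $y\in \mathrm{Sym}(V)$ is sent to the Dunkl operator $y_S$. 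Splitting $V=\overline{S}\oplus S^\perp$ and equipping the source with the ``model'' filtration $F'$ that assigns to a pure tensor $f\otimes g\otimes z\otimes y$, with $f\in I_S^a\setminus I_S^{a+1}$ and $y\in \mathrm{Sym}^b(S^\perp)$ (while $g\in \CC N_S$ and $z\in \mathrm{Sym}(\overline{S})$ are placed in degree $0$), the degree $a-b$, one uses Lemma~3.11(b) to identify
\[
\mathrm{gr}_{F'}(\mathrm{source}) \cong \CC[S^\perp]\otimes_\CC \bigl(\CC[S]\otimes_\CC \mathrm{Sym}(\overline{S})\otimes_\CC \CC N_S\bigr) \otimes_\CC \mathrm{Sym}(S^\perp) \cong \CC[S^\perp]\otimes_\CC (D(S)\rtimes N_S) \otimes_\CC \mathrm{Sym}(S^\perp),
\]
using the Weyl algebra PBW $D(S)\cong \CC[S]\otimes\mathrm{Sym}(\overline{S})$ combined with the semidirect product structure.

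The central step is to show that $\mu$ is strictly filtered, $\mu(F'^k)=F^k$, so that the induced map on associated gradeds is a bijection. The inclusion $\mu(F'^k)\subseteq F^k$ is verified on generators: $f\in I_S^k$ lies in $F^k$ directly; $g\in N_S$ normalizes $I_S$, so $g\in F^0$; for $y\in \overline{S}$, the Cherednik commutator formula $[y_S,f]=\partial_y(f)-\sum_{r\in R_S}c_r\la\alpha_r,y\ra\tfrac{f-r(f)}{\alpha_r}r$ collapses to $[y_S,f]=\partial_y(f)$ because $\la\alpha_r,y\ra=0$ for $r\in R_S$, and $\partial_y$ preserves $I_S^j$ as $y$ is tangent to $S$, giving $y_S\in F^0$; and for $y\in S^\perp$ one checks that $\partial_y(I_S^j)\subseteq I_S^{j-1}$ and $\tfrac{f-r(f)}{\alpha_r}\in I_S^{j-1}$ for $f\in I_S^j$, $r\in R_S$ (the latter using $f-r(f)\in \alpha_r\CC[V_S]$ together with the description $I_S^j=\CC[V_S]\cdot U^j$ from Lemma~3.11(b), where $U=\mathrm{span}\{\alpha_r:r\in R_S\}$), giving $y_S\in F^{-1}$. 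Multiplicativity of $F$ then extends the containment to arbitrary PBW monomials.

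For the reverse inclusion, equivalently the injectivity of the induced map $\overline\mu:\mathrm{gr}_{F'}(\mathrm{source})\to \mathrm{gr}_F H_c(N_S,V_S)$, I use the natural action of $H_c(N_S,V_S)$ on $\CC[V_S]$, which is compatible with the $I_S$-filtration (elements of $F^k$ send $I_S^j$ to $I_S^{j+k}$). This induces a ``symbol'' algebra homomorphism $\mathrm{gr}_F H_c(N_S,V_S)\to \mathrm{End}_\CC(\CC[S]\otimes\CC[S^\perp])$ into graded endomorphisms of $\mathrm{gr}_{I_S}\CC[V_S]=\CC[S]\otimes\CC[S^\perp]$. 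Direct computation on generators shows that $\CC[S^\perp]=\mathrm{Sym}(U)$ acts by multiplication on its own factor; $\mathrm{Sym}(S^\perp)$ acts as constant-coefficient differential operators on $\CC[S^\perp]$ (since the symbol of $y_S$ for $y\in S^\perp$ sends $\alpha_r$ to $\la\alpha_r,y\ra$); and $D(S)\rtimes N_S$ acts on $\CC[S]$ with $N_S$ acting diagonally on both factors. Faithfulness of the combined action of $\CC[S^\perp]\otimes (D(S)\rtimes N_S)\otimes \mathrm{Sym}(S^\perp)$ on $\CC[S]\otimes \CC[S^\perp]$, following from faithfulness of the Weyl algebra $D(S^\perp)$ on $\CC[S^\perp]$, of $D(S)$ on $\CC[S]$, and of the diagonal $N_S$-action on $\CC[S]\otimes\CC[S^\perp]$ (ultimately reducing to faithfulness of $N_S\subseteq \GL(V)$), forces injectivity of $\overline\mu$.

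The main obstacle is the injectivity step, specifically verifying that the diagonal $N_S$-action on $\CC[S]\otimes \CC[S^\perp]$ combines with the noncommuting actions of $D(S)$ on $\CC[S]$ and of $\CC[S^\perp]\cdot \mathrm{Sym}(S^\perp)$ on $\CC[S^\perp]$ into a faithful representation of the target algebra. This is essentially a classical semidirect product / smash product argument, but requires careful bookkeeping to separate contributions from the five tensor factors (noting in particular that $G_S\subseteq N_S$ acts trivially on $\CC[S]$ but faithfully on $\CC[S^\perp]$ via its reflection representation) and to account for the twist arising from the semidirect product structure.
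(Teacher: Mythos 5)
The paper does not supply a proof of this statement: it is stated with the remark ``We will not use it in the sequel,'' and only an equivalent reformulation is given. So there is no authorial argument to compare against, and your proposal must be judged on its own.

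Your overall strategy is sound and close to the expected one: start from the PBW theorem for $H_c(N_S,V_S)$ (obtained from the PBW theorem for $H_c(G,X_S)$ via $H_c(N_S,V_S)=eH_c(G,X_S)e$), transport the $F$-filtration to a model filtration $F'$ on $\CC[V_S]\otimes\CC N_S\otimes\mathrm{Sym}(V)$, compute $\mathrm{gr}_{F'}$ using the identification $\mathrm{gr}_{I_S}\CC[V_S]\cong\CC[S]\otimes\CC[S^\perp]$ from the lemma on degeneration to the normal bundle, check $\mu(F'^k)\subseteq F^k$ on generators (and hence on PBW monomials by multiplicativity of $F$), and then obtain the reverse inclusion by showing the induced map on associated gradeds is injective, which you correctly reduce to faithfulness of the action of $\mathrm{gr}_F(H_c(N_S,V_S))$ on $\mathrm{gr}_{I_S}\CC[V_S]=\CC[S]\otimes\CC[S^\perp]$. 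The reduction ``reverse inclusion $\Leftrightarrow$ injectivity of $\overline\mu$'' is valid given exhaustiveness and separatedness of both filtrations, which hold here. Your generator checks, in particular $[y_S,f]=\partial_y(f)$ for $y\in\overline{S}$ (since $\la\alpha_r,y\ra=0$ for $r\in R_S$) and the degree drop for $y\in S^\perp$, are correct.

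There is, however, a genuine inaccuracy in the faithfulness step. You assert that the symbol of $y_S$ (for $y\in S^\perp$) in $F^{-1}/F^0$ acts on the $\CC[S^\perp]$ factor as the constant-coefficient differential operator $\partial_y$, and accordingly you appeal to faithfulness of the \emph{Weyl algebra} $D(S^\perp)$ on $\CC[S^\perp]$. This is wrong: as the paper's Lemma \ref{degen sum} (and its proof via Lemma \ref{stratum Dunkl}) makes explicit, on the associated graded the $r\in R\setminus R_S$ contributions drop away but the $r\in R_S$ contributions \emph{survive}, so the symbol of $y_S$ on $\CC[S^\perp]$ is the Dunkl operator
\[
f\mapsto\partial_y(f)-\sum_{r\in R_S}c_r\la\alpha_r,y\ra\frac{f-r(f)}{\alpha_r},
\]
that is, the Dunkl operator of $H_c(G_S,S^\perp)$, not $\partial_y$. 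Consequently the subalgebra of $\End(\CC[S^\perp])$ generated by $\CC[S^\perp]$, $\CC G_S$, and the symbols of $y_S$ for $y\in S^\perp$ is $H_c(G_S,S^\perp)$, not $D(S^\perp)\rtimes G_S$. Luckily this does not break the argument: $H_c(G_S,S^\perp)$ acts faithfully on $\CC[S^\perp]$ by its very definition as a subalgebra of $\End(\CC[S^\perp])$, and its own PBW theorem identifies $\CC[S^\perp]\otimes\CC G_S\otimes\mathrm{Sym}(S^\perp)$ with it. So the desired injectivity of the composite
\[
\CC[S^\perp]\otimes(D(S)\rtimes N_S)\otimes\mathrm{Sym}(S^\perp)\longrightarrow\End\bigl(\CC[S]\otimes\CC[S^\perp]\bigr)
\]
still follows, now from faithfulness of $H_c(G_S,S^\perp)$ on $\CC[S^\perp]$, faithfulness of $D(S)$ on $\CC[S]$, and the standard smash-product argument for adjoining the finite cyclic twist by $h$. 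You should replace ``Weyl algebra $D(S^\perp)$'' with $H_c(G_S,S^\perp)$ throughout, correct the description of the symbol, and (as you acknowledge) flesh out the last faithfulness computation combining the three pieces with the diagonal $N_S$-action; with those repairs the proof is correct.
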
 We observe that the theorem may equivalently be stated: multiplication induces an isomorphism
$$\CC[S^\perp \times S] \otimes_\CC \CC N_S \otimes_\CC \CC[V^*] \cong \mathrm{gr}_F(H_c(N_S,V_S)).$$

\subsection{Parabolic degeneration of modules in $\OO_c$} For $M \in \OO_c(G,V)$, the $\CC[S]$-modules $\mathrm{gr^j}_{I_S}(M)$ are finitely generated for all $j$, and each is a $D(S) \rtimes N_S$-module: thus these are vector bundles with $N_S$-equivariant connections. For a point $p \in S$ and a finitely generated $D(S) \rtimes N_S$-module $A$ (the space of sections of a vector bundle that we will also denote by $A$) we write $\mathrm{Sol}_p(A)$ for the space of germs of holomorphic sections $f$ of $A$ near $p$ satisfying $yf=0$ for all $y \in \overline{S}$. Then we define the \emph{parabolic degeneration} of $M$ by
\begin{equation} \label{parabolic degeneration definition}
\mathrm{Deg}_{S,p}(M)=\bigoplus_{j=0}^\infty \mathrm{Sol}_p(\mathrm{gr}^j_{I_S}(M)).
\end{equation}
As $p$ varies this gives a (possibly infinite dimensional, but with finite dimensional graded pieces) local system on $S$. Each graded piece is a representation of the stratum braid group $B_S$, defined in the next section.

The space $\mathrm{Deg}_{S,p}(M)$ carries still more structure: for each $r \in R_S$, the operator $\alpha_r \in F^1/F^0$ commutes with each $y \in \overline{S}$. Likewise the operators $y' \in F^{-1}/F^0$ (for any $y' \in V$) all commute with $y \in F^0/F^1$ (again, for $y \in \overline{S}$), and the operators $g \in F^0/ F^1$ for $g \in N_S$ normalize the space of operators $y \in \overline{S}$. Thus all of these operators act on the solution spaces, and define a representation of $H_c(G_S,\overline{S}^\perp)$ on $\mathrm{Deg}_{S,p}(M)$. As such, it belongs to category $\OO_c$: it is non-negatively graded with finite dimensional graded pieces.

\subsection{Comparison with Bezrukavnikov-Etingof restriction}\label{BezEt}

We will not use the results of this subsection in the remainder of the paper; its purpose is only to indicate the relationship between what we have done and the original construction of Bezrukavnikov and Etingof.

Let $q \in V/G$ be a point and let $I=I(q)$ be the ideal of $q$ in $\CC[V/G]=\CC[V]^G$. Completing at $q$ defines a functor from $\CC[V/G]$-mod to $\widehat{\CC[V/G]}_{q}$-mod that we denote by $M \mapsto \widehat{M}_{q}$. By abuse of notation we will write $\widehat{V}_{q}$ for the formal scheme attached to $\widehat{\CC[V]}_{q}$. The group $G$ acts by automorphisms on this formal scheme, and the \emph{rational Cherednik algebra} $H_c(G,\widehat{V}_q)$ is the subalgebra of $\mathrm{End}_\CC(\widehat{\CC[V]}_q)$ generated by $\widehat{\CC[V]}_{q}$, the group $G$, and for each $y \in V$ a Dunkl operator
$$y(f)=\partial_y(f)-\sum_{r \in R} c_r \la \alpha_r,y \ra \frac{f-r(f)}{\alpha_r}.$$ There is an obvious embedding $H_c(G,V) \subseteq H_c(G,\widehat{V}_q)$ and via this embedding multiplication induces an isomorphism
$$\widehat{\CC[V]}_{q} \otimes_{\CC[V]} H_c(G,V) \cong H_c(G,\widehat{V}_q).$$

The algebra $\widehat{\CC[V]}_{q}$ is a direct sum over the points in the preimage of $q$, and fixing $p$ to be one of these we will write $e \in \widehat{\CC[V]}_{q}$ for the corresponding idempotent. Just as before we have $H_c(G,\widehat{V}_q)=H_c(G,\widehat{V}_q) e H_c(G,\widehat{V}_q)$ and so the idempotent slice functor $M \mapsto eM$ gives an equivalence from $H_c(G,\widehat{V}_q)$-mod to $e H_c(G,\widehat{V}_q) e$-mod. On the other hand, we have the following analog of Theorem \ref{parabolic Dunkl formula}: defining the Cherednik algebra for $G_p$ acting on $\widehat{V}_p$ as the subalgebra of $\End_\CC(\widehat{\CC[V]}_p)$ generated by $\widehat{\CC[V]}_p$, the group $G_p$, and for each $y \in V$ a \emph{Dunkl operator} 
$$y_p(f)=\partial_y(f)-\sum_{r \in R_p} c_r \la \alpha_r,y \ra \frac{f-r(f)}{\alpha_r},$$ where $R_p=G_p \cap R$ is the set of reflections fixing $p$, we have the equalities
$$e y e=y_p+\sum_{r \in R \setminus R_p} c_r \la \alpha_r,y \ra \frac{1}{\alpha_r} \quad \text{and hence} \quad e H_c(G,\widehat{V}_q) e= H_c(G_p,\widehat{V}_p).$$ The composite $M \mapsto e \widehat{M}_q$ thus defines an exact functor from $H_c(G,V)$-mod to $H_c(G_p,\widehat{V}_p)$-mod, which sends $M$ to $\widehat{M}_p$ as a $\CC[\widehat{V}_p] \rtimes G_p$-module. There are two further modifications necessary in order to obtain a module in category $\OO_c(G_p,V/V^{G_p})$. First, we consider the $H_c(G_p,V_p)$-submodule consisting of vectors $m$ that are killed by sufficiently high degree homogeneous polynomials in the Dunkl operators. Second, we consider the subspace of the resulting space consisting of vectors killed by the Dunkl operators $y \in V^{G_p}$. The appropriate analog of Theorem 2.3 from \cite{BeEt} should show that parabolic degeneration is in some sense the associated graded of parabolic restriction; it should be interesting to investigate this relationship in more detail.

\section{Stratum braid group representations }

In this section we will define the stratum braid group and compute the eigenvalues of monodromy for its action on the degenerations of standard modules.

\subsection{Definition of the stratum braid group} In this section we fix a linear subspace $\overline{S} \subseteq V$, and we take $S$ to be the open subset
$$S=\overline{S} \setminus \bigcup_{\substack{H \in \mathcal{A} \\ \overline{S} \nsubseteq H}} H.$$ Then all the points of $S$ have the same stabilizer group
$$G_S=\la r \in R \ | \ \mathrm{fix}_V(r) \supseteq \overline{S} \ra.$$ The main examples we have in mind are when $S$ is a stratum for the action of $G$, or when $S$ consists of non-zero multiples of an eigenvector for some element $g \in G$. As before we set
$$N_S=\{g \in G \ | \ g(S) \subseteq S \},$$ and we choose a base point $b \in S$. We observe that $N_S \subseteq N_G(G_S)$, and that $N_S = N_G(G_S)$ is the normalizer of $G_S$ in $G$ if $S$ is a stratum.

We will write $\pi_1(S)$ for the \emph{fundamental groupoid} of $S$: this is the category whose objects are the points of $S$ and whose morphisms are sets
$$\mathrm{Hom}_{\pi_1(S)}(x,y)=\{\hbox{Homotopy classes of paths in $S$ from $x$ to $y$} \}.$$  We will use the convention that $f_1 f_2$ means first travel along the path $f_1$ and then along the path $f_2$: thus if $f_1 \in \mathrm{Hom}(x,y)$ and $f_2 \in \mathrm{Hom}(y,z)$ then $f_1 f_2 \in \mathrm{Hom}(x,z)$.

The \emph{stratum braid group} $B_S=B(G,V,S,b)$ (we use this terminology even though $S$ might not be all of a stratum) is the set of pairs
$$B(G,V,S,b)=\{(f,n) \ | \ n \in N_S, \ f \in \mathrm{Hom}_{\pi_1(S)}(b,n(b)) \},$$ with group law
$$(f_1,n_1)(f_2,n_2)=(f_1 n_1(f_2), n_1 n_2).$$ Note that $f_2$ is a path from $b$ to $n_2(b)$, so that $n_1(f_2)$ is a path from $n_1(b)$ to $n_1 n_2(b)$ and the composition is well-defined. In the case in which the group $N_S$ acts freely on $S$, $B_S$ is the fundamental group of the quotient $S/N_S$ with base point the orbit of $b$. In particular, we obtain the usual braid group in the special case where $S$ is the big open stratum.

Given an $N_S$-equivariant vector bundle $\mathcal{U}$ on $S$ with an $N_S$-equivariant flat connection, we obtain a (right) representation of $B_S$ on the fiber $\mathcal{U}(b)$ of $\mathcal{U}$ at $b$ in the usual way: if $u \in \mathcal{U}(b)$ and $(f,n) \in B_S$, then we define $u(f,n)$ to be the element of $\mathcal{U}(b)$ obtained by choosing a flat section in a neighborhood of  $b$ with value $u$ at $b$, analytically continuing it along $f$ to $n(b)$, and then transporting its value back to $b$ by applying $n^{-1}$. We obtain a left action by taking inverses,
$$(f,n).v=v(f,n)^{-1}=v(n^{-1}f^{-1},n^{-1}).$$ Explicitly, $(f,n)v$ is obtained by analytic continuation along the path $n^{-1}(f^{-1})$ from $b$ to $n^{-1}(b)$, followed by the application of $n$.

\subsection{Monodromy around $C \subseteq \overline{S}$} Let $C \subseteq \overline{S}$ be the intersection $\overline{S} \cap H$ for some $H \in \mathcal{A}$ with $S \nsubseteq H$, and write $G_C \supseteq G_S$ for the corresponding parabolic subgroup of $G$. Let $N_{S,C}=N_S \cap G_C$ be the normalizer of $S$ in $G_C$. Since $C$ is of codimension $1$ in $\overline S$, the quotient $N_{S,C} / G_S$ acts faithfully on the line $\overline S / C$ hence is cyclic, and by \cite{How, Mur} we have a splitting:
\[
N_{S,C} = G_S \rtimes \la h \ra.
\]
%(This hypothesis has been verified when $G$ is a real reflection group by Howlett \cite{How}, and for complex reflection groups using the Shephard-Todd classification in the University of Sydney thesis \cite{Mur} of Muraleedaran).
Let $n_{S,C}$ be the order of $h$.

Choose a point $p \in C$ with
$$\mathrm{Stab}_{N_S}(p)=N_{S,C}$$ and let $z_1,z_2,\dots,z_d$ be a system of holomorphic coordinates on $\overline{S}$ (here $d=\mathrm{dim}(S)$) with $z_j(p)=0$ for all $j$, $z_d$ a linear function on $\overline{S}$ and
\[
C = \{w \in \overline{S} \ | \ z_d(w)=0 \}.
\]
We may also assume that $z_1,\dots,z_{d-1}$ are $h$-invariant and that $h$ is chosen so that $h z_d=\zeta^{-1} z_d$ where $\zeta=e^{2 \pi i / n_{S,C}}$ is a primitive $n_{S,C}$-th root of $1$. Let
\[
\frac{\partial}{\partial z_1},\dots,\frac{\partial}{\partial z_d}
\]
be vector fields near $p$ with
$$\frac{\partial}{\partial z_j} z_k=\delta_{jk}.$$

Define the \emph{monodromy around $C$} to be the element $T_C$ of the stratum braid group $B_S$ given by
$$T_C=(f,h)$$ where $f$ is the path from the base point $b=(b_1,\dots,b_d)$ to $h(b)=(b_1,\dots,b_{d-1},e^{2 \pi i /n_{S,C}}b_d)$ given by
$$f(t)=(b_1,\dots,b_{d-1},e^{2 \pi i t/ n_{S,C}}b_d).$$ 

In what follows we distinguish the case in which $S$ is a stratum, when we will consider the
$j$-th graded piece $\mathrm{gr}^j_{I_S}( \CC[V_S] \otimes_{\CC[V]} \Delta_c(\lambda))$ of the degeneration, from the case in which it is not; we make this distinction simply by taking $j = 0$ when $S$ is not a stratum, so that we are considering just the restriction to $S$ of the standard module. For any $j$, this is a vector bundle on $S$ with a connection given by the operators $y_S$ for $y \in \overline{S}$:
$$\nabla_y(f_1 f_2 u)=f_1 \left(\partial_y (f_2)  u - \sum_{r \in R \setminus R_S} c_r \la \alpha_r,y \ra \frac{f_2}{\alpha_r} (r(u)-u) \right) .$$ Thus $T_C$ acts on the fiber $\CC[S^\perp]^j \otimes S^\lambda$ of this vector bundle (again, we take $j=0$ if $S$ is not a stratum). We will compute its eigenvalues in terms of the representation theory of the tower of algebras $\CC G \supseteq \CC G_C \supseteq \CC N_{S,C}$.

\subsection{$c$-functions for subgroups and eigenvalues of monodromy} For $S^\lambda \in \mathrm{Irr}(\CC G)$,  $S^\mu \in \mathrm{Irr}(\CC G_C)$  and $S^\nu \in \mathrm{Irr}(\CC N_{S,C})$ let $S^{\lambda,\mu}$ be the $\mu$-isotypic component of $S^\lambda$ and let $S^{\lambda,\mu,\nu}$ be the $\nu$-isotypic component of $S^{\lambda,\mu}$. Treating the various $c_r$'s as variables, define the set of \emph{eigenvalues of monodromy}
\begin{equation} \label{E def}
E^{\lambda,j}_{S,C}=\{\zeta^k e^{2\pi  i  (c_\mu-c_\nu)/n_{S,C}} \}
\end{equation} where $(k,\mu,\nu)$ runs over all triples $0 \leq k \leq n_{S,C}-1$, $\mu \in \mathrm{Irr}(\CC G_C)$, and $\nu \in \mathrm{Irr}(\CC N_{S,C})$, such that $e^{2 \pi  i  k/ {n_{S,C}}}$ is an eigenvalue of $h$ on $\CC[S^\perp]^j \otimes_\CC S^{\lambda,\mu,\nu}$. Here, in accordance with previous notation, $c_\nu$ is the scalar by which $\sum_{r \in R_S} c_r(1-r)$ acts on $S^\nu$ (note that this sum is actually central in $\CC N_{S,C}$ since $G_S$ is normal in $N_{S,C}$). We note that if one further restricts $S^\nu$ to $G_S$, then $c_\nu$ is also the value of the $c$-function for the subgroup $G_S$ on any irreducible constituent of $S^\nu$.

The elements of $E^{\lambda,j}_{S,C}$ are functions of $c=(c_r)_{r \in R}$; if we specialize $c$ to a particular value we will write $\tau(c)$ for the number obtained by substituting $c$ into the function $\tau \in E^{\lambda,i}_{S,C}$.

In the following theorem, part (b) is a version of the results of \cite{BMR} and \cite{Opd}, which correspond to the special case when $S=V^\circ$ is the big open stratum. Our proof is an adaptation of Opdam's argument to our situation. In case (a), when $S$ is just one dimensional, we can give an explicit formula for the monodromy operator that will be crucial for the proof of Theorem \ref{main}.
\begin{theorem} \label{monodromy eigenvalues}
Suppose $\overline{S}$ is a subspace of $V$ and $S$ is the complement in $\overline{S}$ of the set of hyperplanes that do not contain $S$. Let $j \in \ZZ_{\geq 0}$ and assume $j = 0$ if $S$ is not a stratum.
\begin{enumerate}[(a)]
\item In case $S$ is one-dimensional, we have $C=\{0\}$ and may omit it from the notation, writing $N_S=N_{S,C}$ and $T$ for the monodromy. The $D(S) \rtimes N_S$-modules $\CC[S^\perp]^j \otimes_\CC S^{\lambda}$ decompose as direct sums 
$$\CC[S^\perp]^j \otimes_\CC S^{\lambda}=\bigoplus_{\mu \in \mathrm{Irr}(\CC N_S)} \CC[S^\perp]^j \otimes_\CC S^{\lambda,\mu}.$$ For each irreducible representation $S^\mu$ of $N_S$ we then have
$$T=e^{2 \pi i (c_\lambda-c_\mu)/n_S} h \quad \hbox{on $\CC[S^\perp]^j \otimes_\CC S^{\lambda,\mu}$.}$$
\item The monodromy operator $T_C$ around $C$ acting on $\mathrm{gr}^j_{I_S}(\CC[V_S] \otimes_{\CC[V]} S^{\lambda})$ satisfies
$$\prod_{\tau \in E^{\lambda,i}_{S,C}} (T_C-\tau(c))=0.$$
\end{enumerate}
\end{theorem}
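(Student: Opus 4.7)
Both parts rest on the following calculation: for $y \in \overline{S}$ one has $\langle \alpha_r, y \rangle = 0$ whenever $r \in R_S$, so Lemma \ref{stratum Dunkl} reduces the action of the Dunkl operator $y_S$ on a flat section of $\mathrm{gr}^j_{I_S}(\CC[V_S] \otimes_{\CC[V]} S^\lambda)$ to
\[
\nabla_y \phi = \partial_y \phi - \sum_{r \in R \setminus R_S} c_r \frac{\langle \alpha_r, y \rangle}{\alpha_r}(r-1)\phi,
\]
where $(r-1)$ acts only on the $S^\lambda$-tensor factor. The key identity is
\[
\sum_{r \in R \setminus R_S} c_r(r-1) = \sum_{r \in R_S} c_r(1-r) - \sum_{r \in R} c_r(1-r);
\]
each summand on the right is a central element of an appropriate group algebra and acts by a known scalar on the relevant isotypic component.

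\textbf{Part (a).} When $\overline{S}$ is one-dimensional, pick a basis $v$ with dual coordinate $z$. Then $\langle \alpha_r, v \rangle / \alpha_r = 1/z$ for every $r \in R \setminus R_S$, so there is no regular part. On the $N_S$-isotypic component $\CC[S^\perp]^j \otimes S^{\lambda, \mu}$, the first sum above acts by $c_\mu$ (using that $G_S$ is normal in $N_S$, so $\sum_{r \in R_S} c_r(1-r)$ is central in $\CC N_S$) and the second by $c_\lambda$, reducing the flatness equation to the scalar ODE $z\phi' = (c_\mu - c_\lambda)\phi$ with solution $\phi(z) = \phi(b)(z/b)^{c_\mu - c_\lambda}$. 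In particular $D(S) \rtimes N_S$ preserves the decomposition $\CC[S^\perp]^j \otimes S^\lambda = \bigoplus_\mu \CC[S^\perp]^j \otimes S^{\lambda, \mu}$. Applying the paper's convention for $T = (f, h)$---analytic continuation along $h^{-1}(f^{-1})$ from $b$ to $\zeta^{-1}b$, then the action of $h$---the transport contributes $(\zeta^{-1})^{c_\mu - c_\lambda} = e^{2\pi i (c_\lambda - c_\mu)/n_S}$, giving $T = e^{2\pi i (c_\lambda - c_\mu)/n_S}\, h$.

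\textbf{Part (b).} Work in coordinates near a generic $p \in C$ with $\mathrm{Stab}_{N_S}(p) = N_{S,C}$, $C = \{z_d = 0\}$, and let $y_d$ be dual to $z_d$. For $r \in R \setminus R_S$ the function $\alpha_r|_{\overline{S}}$ vanishes on $C$ iff $r \in R_C$, in which case it equals $\alpha_r(y_d) z_d$; otherwise $1/\alpha_r$ is holomorphic near $p$. The connection splits as
\[
\nabla_{y_d} = \partial_{y_d} - \frac{1}{z_d}\tilde A - B(z_1,\dots,z_d),
\]
where $B$ is holomorphic at $z_d = 0$ and the residue $\tilde A = \sum_{r \in R_S} c_r(1-r) - \sum_{r \in R_C} c_r(1-r)$ acts by the scalar $c_\nu - c_\mu$ on $E_{\mu, \nu} := \CC[S^\perp]^j \otimes S^{\lambda, \mu, \nu}$. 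Both summands are central in $\CC N_{S,C}$: the first because $G_S$ is normal in $N_{S,C}$, the second because $N_{S,C} \subseteq G_C$ and $\sum_{r \in R_C} c_r(1-r)$ is central in $\CC G_C$. Now $T_C^{n_{S,C}}$ equals parallel transport once around $C$, so by the Frobenius theory of regular singular connections its spectrum on the fiber is $\{e^{2\pi i(c_\mu - c_\nu)}\}$ with multiplicity $\dim E_{\mu, \nu}$. Each eigenvalue of $T_C$ is therefore of the form $\zeta^k e^{2\pi i (c_\mu - c_\nu)/n_{S,C}}$ for some $k$.

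\textbf{Restricting $k$ and main obstacle.} To show that the $k$'s that appear are exactly those for which $\zeta^k$ is an eigenvalue of $h$ on $E_{\mu, \nu}$, rescale $c \mapsto tc$ for $t \in [0,1]$. At $t = 0$ the connection is trivial, $T_C(0) = h$, and $\zeta^k$ has multiplicity $\sum_{\mu, \nu} \dim E_{\mu, \nu}^k$, where $E_{\mu, \nu}^k$ is the $\zeta^k$-eigenspace of $h$ on $E_{\mu, \nu}$. For $t > 0$ each eigenvalue of $T_C(t)$ varies continuously and its $n_{S,C}$-th power is constrained to the discrete set $\{e^{2\pi i t(c_\mu - c_\nu)}\}$, so the branch of the eigenvalue starting at $\zeta^k$ in $E_{\mu, \nu}^k$ is forced to be $\zeta^k e^{2\pi i t(c_\mu - c_\nu)/n_{S,C}}$, landing at $t=1$ on an element of $E^{\lambda, j}_{S, C}$. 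This gives the polynomial identity for generic $c$, and it extends to all $c$ by the holomorphic dependence of $T_C(c)$. The main obstacle is making this allocation rigorous: $T_C$ need not preserve the simultaneous decomposition into $E_{\mu, \nu}^k$, so the assignment of each eigenvalue to a triple $(k, \mu, \nu)$ must be obtained by combining the $T_C^{n_{S,C}}$ constraint with continuity of the spectrum along the deformation.
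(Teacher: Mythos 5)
Your part (a) is correct and essentially the paper's argument: in rank one the holomorphic part of the connection vanishes, the residue is scalar on each $N_S$-isotypic component, and one solves the resulting scalar ODE explicitly; the $T$-eigenvalue then falls out from the definition of $T=(f,h)$.

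For part (b) your setup (coordinates near a generic point of $C$, decomposition $\nabla_{y_d} = \partial_{y_d} - z_d^{-1}\tilde A - B$, identification of the residue with $\sum_{r\in R_S}c_r(1-r) - \sum_{r\in R_C}c_r(1-r)$, and the observation that $T_C^{n_{S,C}}$ is the full monodromy with Frobenius spectrum $\{e^{2\pi i(c_\mu-c_\nu)}\}$) matches the paper, and it correctly reduces to showing that each eigenvalue $\zeta^k e^{2\pi i(c_\mu-c_\nu)/n_{S,C}}$ of $T_C$ is assigned a $k$ for which $\zeta^k$ occurs as an $h$-eigenvalue on $E_{\mu,\nu}$. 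Your proposed deformation $c\mapsto tc$ cannot, by itself, make this assignment, and the ``main obstacle'' you flag is a genuine gap rather than a technicality. At $t=0$ the eigenvalue $\zeta^k$ of $T_C(0)=h$ has multiplicity $\sum_{\mu,\nu}\dim E^k_{\mu,\nu}$; for small $t>0$, continuity plus the $T_C^{n_{S,C}}$ constraint only says that each branch leaving $\zeta^k$ equals $\zeta^k e^{2\pi i t\gamma/n_{S,C}}$ for \emph{some} $\gamma$ in the finite set $\{c_\mu - c_\nu\}$, not the one determined by the decomposition at $t=0$. Already in the case of two one-dimensional pieces $E_{\mu_1,\nu_1}$, $E_{\mu_2,\nu_2}$ with distinct $h$-eigenvalues $\zeta^{k_1}\neq\zeta^{k_2}$ and distinct residues $\gamma_1\neq\gamma_2$, continuity of the spectrum is consistent both with the correct pairing $(k_1,\gamma_1),(k_2,\gamma_2)$ and with the swapped one, so a new input is needed.

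The paper's proof supplies exactly that input. One builds the Frobenius series solution $f(z)=\sum_{m\ge 0} z^{\gamma+m}a_m$ starting from an eigenvector $a_0$ of the residue with eigenvalue $\gamma$ \emph{and} of $h$ with eigenvalue $\zeta^j$, and then uses the $h$-equivariance of the holomorphic tail, $\zeta^{-k-1}h b_k h^{-1}=b_k$, to propagate the $h$-weight through the recurrence: $h a_m = \zeta^{m+j}a_m$ for all $m$. This forces $f(z)=z^{\gamma-j}\cdot(\text{$h$-invariant series})$, so that transporting along the $1/n_{S,C}$-loop and applying $h$ gives the eigenvalue $e^{2\pi i(j-\gamma)/n_{S,C}}=\zeta^j e^{2\pi i(c_\mu-c_\nu)/n_{S,C}}$, with $j$ the $h$-weight of $a_0\in E_{\mu,\nu}$. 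That is precisely the pairing your deformation could not fix. The extension from generic to all $c$ is then done as in Opdam, not by deforming to $c=0$. So your route for (b) diverges from the paper at the crucial step and does not close; you would need to replace the continuity argument by some equivariance statement about the power series (or about the Deligne canonical extension) that tracks the $h$-weight.
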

\begin{proof}
We prove (b) first, and then observe that the argument simplifies somewhat under the hypotheses of (a) to give the more precise conclusion there. We fix linear coordinates $z_k$ on $V$ such that $z_1,\dots,z_d$ are coordinates on $S$, and write $\frac{\partial}{\partial z_k}$ for the dual basis of $V$ (thought of also as differential operators on functions). We will abbreviate $A=\CC[S^\perp]^j \otimes S^\lambda$.

For each $\alpha_r$ with $\{\alpha_r=0 \} \cap \overline{S}=C$, we renormalize so that $\alpha_r=z_d$ on $\overline{S}$, rewrite $f \otimes u=F$ and take $y=\frac{\partial}{\partial z_d}$ so that by Lemma \ref{stratum Dunkl}
$$y_S(F)=\frac{\partial F}{\partial z_d}-\frac{1}{z_d} a(F)-b(F),$$ where $a,b$ are the endomorphisms of the fiber given by
$$a(f v)=f \sum_{\substack{r \in R \setminus R_S \\�\{\alpha_r=0\} \cap \overline{S}=C}} c_r (s-1) v \quad \text{and} \quad b(fv)=f\sum_{\substack{r \in R \setminus R_S \\ \{\alpha_r=0\} \cap \overline{S} \neq C }} c_r \la \alpha_r, \frac{\partial}{\partial z_d} \ra \frac{1}{\alpha_r} (1-r) v,$$ for $f \in \CC[S^\perp]^i$ and $v \in S^\lambda$, and we note that $b$ is holomorphic near $p$ since $\alpha_r(p) \neq 0$ for each term in the sum that defines it. The operator $a$ may be rewritten as
\[
a=\sum_{r \in  R_C \setminus R_S} c_r (r-1)=\sum_{r \in R_S} c_r (1-r)-\sum_{r \in R_C} c_r(1-r),
\]
where $R_C=R \cap G_C$ is the set of reflections whose hyperplane contains $C$. Therefore the eigenvalues of $a$ acting on $A$ are the numbers $c_\nu-c_\mu$, where $c_\nu$ is the scalar by which $\sum_{r \in R_S} c_r(1-r)$ acts on $S^\nu$ and $c_\mu$ is the scalar by which $\sum_{r \in R_C} c_r (1-r)$ acts on $S^\mu$, in accordance with our previous notation, and $\mu$ and $\nu$ range over pairs such that $S^{\lambda,\mu,\nu} \neq 0$.

We will prove (b) first under the assumption that for each eigenvalue $\gamma$ of the operator $a$ acting on $A$, the number  $\gamma+m$ is not an eigenvalue for any $m \in \ZZ_{>0}$. This assumption holds for generic choices of the parameters $c$: the eigenvalues of $a$ are linear functions of the parameter $c_r$, and so the condition that the difference of two of them not be an integer holds for all parameters $c_r$ belonging to the complement of a locally finite set of hyperplanes.

By the general theory of flat connections, each point of $S$ has a neighborhood so that the space of sections $f$ of the bundle on the neighborhood satisfying
$$y_S(f)=0 \quad \hbox{for all $y \in S$} $$ is of dimension equal to that of the fiber $A$. We now analyze the behavior of such a function restricted to a single variable. Writing $f(z)$ for the restriction of $f$ to the complex line $z_1=z_2=\dots=z_{d-1}=0$ in $\mathcal{U}$ with coordinate function $z=z_d$, we have
$$z \frac{df}{dz}=a f(z)+z b(z) f(z).$$ By the theory of ordinary differential equations with regular singular points, there exist $\gamma \in \CC$ and coefficients $a_m \in A$ so that
$$f(z)=\sum_{m=0}^\infty z^{\gamma+m} a_m$$ converges uniformly on a compact neighborhood of our base point to a holomorphic function solving the differential equation; moreover as $a_0$ ranges over a basis of $A$ these range over a basis of the solution space. Comparing dimensions it follows that there is a unique flat section restricting to each such $f$.

Expanding
$$b(z)=\sum_{k=0}^\infty z^k b_k$$ for $b_k \in \mathrm{End}_\CC(S^\lambda)$ we have
\begin{equation} \label{n equivariance}
h (z b(z)) h^{-1}=z b(z) \quad \text{and hence} \quad \zeta^{-k-1} h b_k h^{-1}=b_k.
\end{equation}

Now the differential equation is
$$\sum_{m=0}^\infty (\gamma+m) z^{\gamma+m} a_m=\sum_{m=0}^\infty z^{\gamma+m} a a_m+\sum_{m=0}^\infty z^{\gamma+m}  \left( \sum_{k=0}^{m-1} b_k a_{m-k-1} \right)$$ and we obtain the recurrence
$$(\gamma+m-a) a_m=\sum_{k=0}^{m-1} b_k a_{m-k-1}$$ determining $a_m$ for all $m>0$ from $a_0$, provided $(\gamma-a) a_0=0$ and the operators $\gamma+m-a$ are all invertible for $m>0$. Thus a solution is uniquely determined by the eigenvector $a_0$ for $a$, with $\gamma$ the corresponding eigenvalue, using our assumption that $\gamma+m$ is not an eigenvalue for any $m \in \ZZ_{>0}$. If we also choose $a_0$ so that $ha_0=\zeta^j a_0$ for some integer $j$ with $0 \leq j  \leq n_{S,C}-1$ then the recurrence above together with \eqref{n equivariance} implies that
\[
h a_m=\zeta^{m+j} a_m \quad \hbox{for all $m \in \ZZ_{\geq 0}$}
\]
and hence
\[
f(z)=z^{\gamma-j} \sum_{m=0}^\infty z^{m+j} a_m \quad \text{with} \quad  \sum_{m=0}^\infty z^{m+j} a_m \quad \hbox{invariant under $h$}.
\]
Now the monodromy transformation takes $v:=f(b)$ to
$$T_C(v)=hf( e^{-2 \pi i/n_{S,C} }b)=e^{-(\gamma-j) 2\pi i/n_{S,C}}b^{\gamma-j} \sum_{m=0}^\infty b^{m+j} a_m=e^{ 2\pi i(j-\gamma) /n_{S,C}} v,$$ (in the proof of this formula, if $n_{S,C}=1$ then we interpret $f(e^{-2\pi i /n_{S,C}} b)$ to mean the value of the analytic continuation of $f$ at the end of a full loop).

In order to describe the eigenvalues of $a$ on $S^\lambda$ (and hence the monodromy around $C$) we write
$$S^\lambda=\bigoplus_{\substack{ \mu \in \mathrm{Irr} \CC G_C \\ \nu \in \mathrm{Irr} \CC N_{S,C}}} S^{\lambda,\mu,\nu}$$ where $S^{\lambda,\mu,\nu}$ is the $\nu$-isotypic component of the $\mu$-isotypic component of $S^\lambda$. Then the action of $a$ on $S^{\lambda,\mu,\nu}$ is by the scalar
$$a|_{S^{\lambda,\mu,\nu}}=c_\nu-c_\mu.$$ Hence the formula above can be rewritten
$$T_C(v)=e^{ 2\pi i(j+c_\mu-c_\nu) /n_{S,C}} v.$$
Furthermore, as $a_0$ ranges over a basis of $A$, the flat sections $f$ constructed above range over a basis of the space of all flat sections. This proves part (a) for generic parameters $c$, and the general case follows by the argument of the second paragraph of the proof of Theorem 3.6 from \cite{Opd}.

For part (a), we note that $d=1$, take the base point to be $z=z_1=1$, and observe that the operator $b$ above is $b=0$, while the operator $a$ preserves each summand in the direct sum decomposition, proving the first assertion. The flat sections are of the form $f F$ where $f \in \CC[S^\perp]^j$ and $F:S \rightarrow S^\lambda$ is a function satisfying 
$$\frac{\partial F}{\partial z}=\frac{1}{z} a(F).$$ Let $v \in S^{\lambda,\mu}$. Since $a$ acts by the scalar $c_\mu-c_\lambda$ on $S^{\lambda,\mu}$, the function
$$F=z^{c_\mu-c_\lambda} \otimes v$$ solves the differential equation and has $F(1)=v$, so the action of monodromy is by
$$Tv=e^{-2\pi i (c_\mu-c_\lambda)/n_S} h v$$ as claimed.
\end{proof}

\subsection{Proof of Theorem \ref{main}}

This is essentially the same as the proof of Corollary \ref{simplemain}, but taking into account the extra structure that the monodromy representation gives on the fibers. 
We will apply Theorem \ref{monodromy eigenvalues} in the special case where $S=\CC^\times v$ for a non-zero vector $v$.
We recall that $N_S=\{g \in G \ | \ g(S) \subseteq S \}$, and $N_S$ splits as a semi-direct product $G_S \rtimes \la h \ra$ for some $h \in N_S$. The braid group $B_S$ in this case is generated by $G_S$ and the monodromy operator $T$ around $0 \in \overline{S}$ from the previous section, with the relation
$$T g T^{-1} = h g h^{-1}.$$

Given an irreducible $\CC N_S$-module $S^\nu$ and a complex number $\tau$, we define  an irreducible $B_S$-module $S^\nu_\tau$, the \emph{$\tau$-twist of $S^\nu$} by setting $S^\nu_\tau=S^\nu$ as a $G_S$-module, with the action of $T$ defined by $T=e^{2 \pi i \tau} h$. (The stratum braid group is a central extension of $N_S$, and these twisted irreducibles are projective representations of $N_S$).

\cut{
\begin{theorem} \label{necessary condition}
Suppose $v \in V \setminus \{0 \}$ and let $S=\CC^\times v$. If $L_c(\lambda)$ is not supported on $S$, then for each irreducible representation $S^\nu$ of $N_S$ with $S^{\lambda,\nu} \neq 0$, there is an irreducible $\CC G$ module $S^{\mu}$ and an irreducible $\CC N_S$-module $S^\eta$ such that
\begin{enumerate}
\item[(1)] $\mu >_c \lambda$,
\item[(2)] $S^{\mu,\eta} \neq 0$, and
\item[(3)] $S^\nu_{(c_\lambda-c_\nu)/n_S} \cong S^\eta_{(c_\mu-c_\eta)/n_S}$ as $B_S$-modules.
\end{enumerate}
\end{theorem}
\begin{proof}
}
Let $F_S:\OO_c \rightarrow B_S-\mathrm{mod}$ be the functor sending $M \in \OO_c$ to the monodromy representation of $B_S$ on the fiber of the restriction $M|_S$. Then $F$ is right exact, so from the exact sequence
$$0 \longrightarrow \mathrm{Rad}(\Delta_c(\lambda)) \longrightarrow \Delta_c(\lambda) \longrightarrow L_c(\lambda) \longrightarrow 0$$ and the fact that $F_S(L_c(\lambda))=0$, we obtain an exact sequence
$$F_S( \mathrm{Rad}(\Delta_c(\lambda))) \longrightarrow F_S(\Delta_c(\lambda)) \longrightarrow 0.$$ Right exactness of $F_S$ and $\mathrm{Rad}(\Delta_c(\lambda)) \in \OO^{>c_\lambda}_c$  together imply that $F_S(\Delta_c(\lambda))$ belongs to the subcategory of $B_S$-mod consisting of modules possessing filtrations whose layers are quotients of the $F_S(\Delta_c(\mu))$ for $\mu >_c \lambda$. The action of $B_S$ on $F_S(\Delta_c(\lambda))$ is determined by Theorem \ref{monodromy eigenvalues}: $F_S(\Delta_c(\lambda))$ is a direct sum of irreducible representations of $B_S$ of the form $S^\nu_{(c_\lambda-c_\nu)/n_S}$, where $\nu$ runs over the irreducible representations of $\CC N_S$ such that $S^{\lambda,\nu} \neq 0$. 
\cut{
The theorem follows from this.
\end{proof}
}

\begin{remark}
In the case $G$ is a Weyl group, applying Springer correspondence to the lowest weights allowed by Theorem \ref{main} seems to produce distinguished nilpotent orbits. Perhaps this is evidence that some kind of cuspidal character sheaves correspond to finite dimensional Cherednik algebra modules.
\end{remark}

\subsection{Degeneration of standard modules}

\begin{lemma} \label{degen sum}
Assume that $S$ is a stratum of dimension $1$. The $H_c(G_S,S^\perp)$-module $\mathrm{Deg}_{S,p}(\Delta_c(\lambda))$ is a direct sum
$$\mathrm{Deg}_{S,p}(\Delta_c(\lambda))=\bigoplus \Delta_c(S^{\lambda,\mu}) z^{c_\mu-c_\lambda}$$ with $T$-action on the factor indexed by $\mu$ given by
\[
T|_{\Delta_c(S^{\lambda,\mu}) z^{c_\mu-c_\lambda}}=e^{2 \pi i (c_\lambda-c_\mu)/n_S} h.
\]
and with $\Delta_c(S^{\lambda,\mu}) z^{c_\mu-c_\lambda}$ isomorphic to $\Delta_c(S^{\lambda,\mu})$ as a $H_c(G,S^\perp)$-module.
\end{lemma}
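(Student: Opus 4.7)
The plan is to apply Theorem~\ref{monodromy eigenvalues}(a) to describe the flat sections explicitly, and then to read off the $H_c(G_S,S^\perp)$-module structure from Lemma~\ref{stratum Dunkl} by taking its $F^{-1}/F^0$-reduction on the fiber.

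First I would identify the underlying vector space of $\mathrm{Deg}_{S,p}(\Delta_c(\lambda))$. By Theorem~\ref{monodromy eigenvalues}(a) (and its proof), the connection on the fiber $\CC[S^\perp]^j\otimes S^\lambda$ of $\mathrm{gr}^j_{I_S}(\Delta_c(\lambda)|_{V_S})$ acts trivially on the $\CC[S^\perp]^j$ factor and acts on $S^\lambda$ by the endomorphism $a$, which is the scalar $c_\mu-c_\lambda$ on the $N_S$-isotypic summand $S^{\lambda,\mu}$ of $\mathrm{Res}^G_{N_S}S^\lambda$. Solving $\partial_z F=(1/z)aF$ on each $\mu$-summand gives the basis of flat sections $z^{c_\mu-c_\lambda}\,g\otimes v$ for $g\in\CC[S^\perp]^j$ and $v\in S^{\lambda,\mu}$, and summing over $j\geq 0$ in \eqref{parabolic degeneration definition} yields the asserted vector-space decomposition
\[
\mathrm{Deg}_{S,p}(\Delta_c(\lambda))=\bigoplus_\mu z^{c_\mu-c_\lambda}\,\CC[S^\perp]\otimes S^{\lambda,\mu}.
\]

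Next I would verify the $H_c(G_S,S^\perp)$-module structure on each summand by tracing through the homomorphism $H_c(G_S,S^\perp)\to\mathrm{gr}_F(H_c(N_S,V_S))$ given by items (a), (b), (c) preceding Theorem~\ref{parabolic Dunkl formula}. Multiplication by $\alpha_r$ for $r\in R_S$ sends $z^{c_\mu-c_\lambda}\,g\otimes v$ to $z^{c_\mu-c_\lambda}\,\alpha_r g\otimes v$; an element $g'\in G_S\subseteq N_S$ acts diagonally on $\CC[S^\perp]\otimes S^{\lambda,\mu}$ and preserves $S^{\lambda,\mu}$ since $G_S\subseteq N_S$. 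For the Dunkl operators, Lemma~\ref{stratum Dunkl} expresses $y_S(f\otimes u)$ for $y\in S^\perp$ as a sum of three terms: the $\partial_y f$ and $R_S$ contributions lie in $F^{-1}$, while the $R\setminus R_S$ contribution (in which $\alpha_r$ is a unit on $V_S$) lies in $F^0$. Its class in $F^{-1}/F^0$, restricted to the fiber, reads
\[
y\cdot(g\otimes v)=\partial_y(g)\otimes v-\sum_{r\in R_S}c_r\,\la\alpha_r,y\ra\,\frac{g-r(g)}{\alpha_r}\otimes r(v),
\]
which is precisely the standard Dunkl operator of $H_c(G_S,S^\perp)$ acting on $\CC[S^\perp]\otimes\mathrm{Res}^{N_S}_{G_S}S^{\lambda,\mu}$. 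Hence each summand $z^{c_\mu-c_\lambda}\CC[S^\perp]\otimes S^{\lambda,\mu}$ is isomorphic to $\Delta_c(S^{\lambda,\mu})$ as an $H_c(G_S,S^\perp)$-module.

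The $T$-action on the $\mu$-summand then follows immediately from Theorem~\ref{monodromy eigenvalues}(a), which gives $T=e^{2\pi i(c_\lambda-c_\mu)/n_S}h$ on $\CC[S^\perp]^j\otimes S^{\lambda,\mu}$ for every $j$. The one place demanding care is the bookkeeping of filtration degrees in Lemma~\ref{stratum Dunkl} to ensure that the $F^{-1}/F^0$-reduction of $y_S$ isolates exactly the $G_S$-Dunkl operator (and not extra $R\setminus R_S$-terms), so that the three generating families of $H_c(G_S,S^\perp)$ act on the flat sections by the standard formulas; once this is verified, the remaining assertions are routine.
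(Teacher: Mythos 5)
Your proposal is correct and follows essentially the same route as the paper: both rely on Lemma~\ref{stratum Dunkl}, the observation that the $R\setminus R_S$ terms fall into $F^0$ and hence vanish in the $F^{-1}/F^0$ action on the associated graded, and Theorem~\ref{monodromy eigenvalues}(a) for the isotypic decomposition and the $T$-eigenvalue. The only difference is cosmetic (you solve for the flat sections first and then verify the $H_c(G_S,S^\perp)$-structure, while the paper first identifies the associated graded Dunkl action and then cites Theorem~\ref{monodromy eigenvalues}(a)); the mathematical content is identical.
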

\begin{proof}
There is a $\CC[S \times S^\perp] \rtimes N_S$-module isomorphism
$$\CC[S \times S^\perp] \otimes S^\lambda \cong \mathrm{gr}_{I_S}( \CC[V_S] \otimes_{\CC[V]} \Delta_c(\lambda)).$$ Now using Lemma \ref{stratum Dunkl}, on the associated graded the action of the Dunkl operators is given by the formula
$$y_S(f \otimes u)=\partial_y(f) \otimes u -\sum_{r \in R_S} c_r \la \alpha_r,y \ra \frac{f-r(f)}{\alpha_r} \otimes r(u) \quad \hbox{for $y \in S^\perp \subseteq F^{-1}/F^0$.} $$ The crucial point here is that in the associated graded module we can ignore the terms from the formula in Lemma \ref{stratum Dunkl} coming from $r \in R \setminus R_S$. By part (a) of Theorem \ref{monodromy eigenvalues}, the direct sum and $T$ action are as claimed, and our explicit formula for the Dunkl operator proves the last assertion.
\end{proof}

\subsection{Morphisms between standard modules}

Here we restate and prove Theorem \ref{standard recursion} from the introduction, with the notation used there:
\begin{theorem} \label{standard recursion2}
If there is a non-zero morphism $\Delta_c(\lambda) \rightarrow \Delta_c(\mu)$, then for each one-dimensional stratum $S$ there irreducible representations $S^\chi$ and $S^\nu$ of $N_S$ with $ c_\lambda-c_\mu-(c_\chi-c_\nu) \in \ZZ_{\geq 0}$ and a non-zero map of $H_c(N_S,S^\perp)$-modules
$$\Delta_c(S^{\chi} \otimes_\CC L^{\otimes (c_\lambda-c_\mu-(c_\chi-c_\nu))}) \rightarrow \Delta_c(S^{\nu}).$$ 
\end{theorem}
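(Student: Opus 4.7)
The plan is to apply parabolic degeneration to $\phi$, analyze the result using Lemma~\ref{degen sum}, and read off the required morphism from a non-zero component. First I would restrict $\phi$ to $V_S$; since $\Delta_c(\mu)|_{V_S}=\CC[V_S]\otimes S^\mu$ is a torsion-free $\CC[V_S]$-module the restriction remains non-zero, and a direct check using Theorem~\ref{parabolic Dunkl formula} (expressing $y$ in terms of $y_S$ and operators from $R\setminus R_S$) shows that it is $H_c(N_S,V_S)$-linear. Let $e\geq 0$ be the largest integer with $\phi(\Delta_c(\lambda)|_{V_S})\subseteq I_S^e\,\Delta_c(\mu)|_{V_S}$; this is finite by Krull's intersection theorem, and bounded above by $d:=c_\lambda-c_\mu$ since $\phi(1\otimes v)$ lives in polynomial degree $d$ while any element of $I_S^e$ has polynomial degree at least $e$. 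Passing to associated graded produces a non-zero $\mathrm{gr}_F(H_c(N_S,V_S))$-equivariant map
\[
\overline{\phi}\colon \mathrm{gr}_{I_S}\bigl(\Delta_c(\lambda)|_{V_S}\bigr)\longrightarrow \mathrm{gr}_{I_S}\bigl(\Delta_c(\mu)|_{V_S}\bigr)
\]
of degree $e$ in the $I_S$-adic grading.

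Applying $\mathrm{Sol}_p$ componentwise and decomposing both sides via Lemma~\ref{degen sum}, the non-vanishing of $\overline{\phi}$ singles out $\chi,\nu\in\mathrm{Irr}(\CC N_S)$ and a non-zero component
\[
\Phi_{\nu,\chi}\colon \Delta_c(S^{\lambda,\chi})\,z^{c_\chi-c_\lambda}\longrightarrow \Delta_c(S^{\mu,\nu})\,z^{c_\nu-c_\mu}
\]
of $H_c(G_S,S^\perp)$-modules shifting the $\CC[S^\perp]$-degree by $e$. The identities $T=e^{2\pi i(c_\lambda-c_\chi)/n_S}h$ on the source and $T=e^{2\pi i(c_\mu-c_\nu)/n_S}h$ on the target, together with $T$-equivariance of $\Phi_{\nu,\chi}$, force $h\,\Phi_{\nu,\chi}=e^{2\pi i k/n_S}\,\Phi_{\nu,\chi}\,h$ with $k=(c_\lambda-c_\mu)-(c_\chi-c_\nu)$. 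Tensoring the source with $L^{\otimes k}$ absorbs this phase and converts $\Phi_{\nu,\chi}$ into an honest $H_c(N_S,S^\perp)$-equivariant map $\Delta_c(S^\chi\otimes L^{\otimes k})\to\Delta_c(S^\nu)$, which is the required morphism.

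It remains to check that $k$ is a non-negative integer. The external Euler element of $H_c(G,V)$ preserves the $I_S$-adic filtration on $\Delta_c(\lambda)|_{V_S}$ (as $I_S$ is graded by polynomial degree) and so acts on the degeneration; on the flat section $z^{c_\chi-c_\lambda}\otimes v$ from the proof of Theorem~\ref{monodromy eigenvalues}(a), with $v\in S^{\lambda,\chi}$, its eigenvalue is $(c_\chi-c_\lambda)+c_\lambda=c_\chi$, so external Euler on the $\chi$-summand takes values in $c_\chi+\ZZ_{\geq 0}$, and analogously $c_\nu+\ZZ_{\geq 0}$ on the target. Since $\phi$ preserves external Euler, $\Phi_{\nu,\chi}$ sends the minimum-degree part of the source into the target at eigenvalue $c_\chi$, forcing $c_\chi-c_\nu=e\in\ZZ_{\geq 0}$; combined with $e\leq d$ from the first paragraph this yields $k=d-e\in\ZZ_{\geq 0}$. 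The main technical obstacle I anticipate is precisely this Euler-grading bookkeeping, in particular verifying rigorously that the external Euler eigenvalues on the degeneration take the expected form $c_\chi+\ZZ_{\geq 0}$, since $c_\chi-c_\lambda$ need not be an integer and so $z^{c_\chi-c_\lambda}$ is only a formal expression whose Euler-eigenvalue interpretation must be justified.
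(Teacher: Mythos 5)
Your proof follows essentially the same approach as the paper: restrict the morphism to $V_S$, pass to the $I_S$-adic associated graded (the parabolic degeneration), decompose via Lemma~\ref{degen sum}, and extract a non-zero component whose $T$-equivariance determines the twist by $L^{\otimes k}$. Your Euler-grading argument establishing $e = c_\chi - c_\nu$ supplies a justification for a step the paper's proof asserts without elaboration (``We have $m = c_\chi - c_\nu$''), and your accounting is correct (note the paper's displayed target should read $\Delta_c(S^{\mu,\nu})z^{c_\nu - c_\mu}$ rather than $z^{c_\nu-c_\lambda}$, a typo your version implicitly corrects).
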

\begin{proof}
Suppose $f:\Delta_c(\lambda) \rightarrow \Delta_c(\mu)$ is a non-zero morphism (necessarily of polynomial degree $c_\lambda-c_\mu$) and fix a one-dimensional stratum $S$. Let $m$ be the greatest integer with $f(S^\lambda) \subseteq I(S)^m S^\mu$, where $I(S)$ is the ideal of $S$ in $\CC[V]$. We define the \emph{initial term} of $f$ with respect to $S$ to be the morphism of polynomial degree $m$
$$\mathrm{in}(f):\mathrm{Deg}_{S,p}(\Delta_c(\lambda)) \rightarrow \mathrm{Deg}_{S,p}(\Delta_c(\mu))$$ induced by $f$; it is non-zero. Using Lemma \ref{degen sum} It follows that there are irreducible representations of $\chi$ and $\nu$ of $\CC N_S$ and a non-zero morphism of degree $m$
$$\Delta_c(S^{\lambda,\chi}) z^{c_\chi-c_\lambda} \rightarrow \Delta_c(S^{\mu,\nu}) z^{c_\nu-c_\lambda}.$$ We have $m=c_\chi-c_\nu$ and hence $$c_\lambda-c_\mu-(c_\chi-c_\nu) \in \ZZ_{\geq 0},$$ proving the first claim. The second follows by twisting the map $\mathrm{in}(f)$ by the appropriate tensor power of $L$. 
\end{proof} We observe that if we choose any irreducible $G_S$-constituents $S^{\lambda_1}$ of $S^\chi$ and $S^{\mu_1}$ of $S^\nu$, then $c_{\lambda_1}=c_\chi$ and $c_{\mu_1}=c_\nu$ so that $$c_\lambda-c_{\chi}-(c_\mu-c_{\nu})=c_\lambda-c_{\lambda_1}-(c_\mu-c_{\mu_1})=c_{U,n}-c_{T,n}$$ for any flag $V_\bullet$ with $\overline{S}$ as its one-dimensional piece and any $T \in \mathrm{SYT}_{V_\bullet}(\lambda)$ and $U \in \mathrm{SYT}_{V_\bullet}(\mu)$ with $T=(\lambda,\lambda^1,\dots)$ and $U=(\mu,\mu^1,\dots)$.
Theorem \ref{standard map1} from the introduction now follows by induction on the dimension of $V$.

For the groups $G(r,1,n)$, and considering only the flag defined by $V_i=\{x_{i+1}=x_{i+2}=\cdots=x_n=0\}$ Theorem \ref{standard recursion2} may be stated non-recursively: see Theorem \ref{gr1n} in the next section.  

\subsection{Highest weight ordering}
In this section we show that, as a formal consequence of our necessary condition for the existence of a map between two standard modules, we may coarsen the $c$-ordering on category $\OO_c(G,V)$ to obtain a new highest weight ordering.

A \emph{highest weight category} is a triple $(C,\Delta,\leq)$, where $C$ is an abelian category and $(\Delta,\leq)$ is a finite poset of objects of $C$ with the properties:
\begin{enumerate}
\item[(a)] Each $\Delta(\lambda)$ is indecomposable of finite length, with a simple top $L(\lambda)$ and a projective cover $P(\lambda)$ in $C$,

\item[(b)] if $L(\lambda)$ appears as a composition factor in the radical of $\Delta(\mu)$ then $\lambda < \mu$, 

\item[(c)] the kernel of the map $P(\lambda) \rightarrow \Delta(\lambda)$ has a finite filtration with layers among the standard objects $\Delta(\mu)$ with $\mu > \lambda$, and
\item[(d)] if $M \in C$ and $\mathrm{Hom}(\Delta(\lambda),M)=0$ for all $\lambda$, then $M=0$.
\end{enumerate} By (c), the multiplicity $[P(\lambda):\Delta(\mu)]$ is well-defined. The highest weight category satisfies \emph{Bernstein-Gelfand-Gelfand reciprocity} (or simply ``BGG reciprocity") if in addition $[P(\lambda):\Delta(\mu)]=[\Delta(\mu):L(\lambda)]$ for all $\lambda,\mu$. 

Given two partial orders $\leq$ and $\leq'$, we say that $\leq'$ \emph{refines} $\leq$ if $$\lambda \leq \mu \implies \lambda \leq' \mu.$$ In this situation we say $\leq$ is \emph{coarser than} $\leq'$. It is an immediate consequence of the definition that if $\leq'$ is a partial order refining $\leq$ then $(C,\Delta,\leq')$ is a highest weight category. The following lemma shows that for a highest weight category with BGG reciprocity, there is a unique coarsest ordering $\leq_{\mathrm{min}}$ on $\Delta$ making $(C,\Delta,\leq_{\mathrm{min}})$ a highest weight category.
\begin{lemma} \label{hwo}
Suppose $(C,\Delta,\leq)$ is a highest weight category with BGG reciprocity and let $\leq_{\mathrm{min}}$ be the coarsest partial ordering such that $\mathrm{Hom}(\Delta(\lambda),\Delta(\mu)) \neq 0 \ \implies \lambda \leq_{\mathrm{min}} \mu$. Then $(C,\Delta,\leq_{\mathrm{min}})$ is a highest weight category.
\end{lemma}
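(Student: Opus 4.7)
The plan is to verify the four axioms (a)--(d) for the triple $(C,\Delta,\leq_{\mathrm{min}})$. Axioms (a) and (d) do not reference the partial order and so hold automatically. To see that $\leq_{\mathrm{min}}$ is a partial order I would check antisymmetry: a non-zero map $\Delta(\lambda) \to \Delta(\mu)$ forces $L(\lambda)$ to appear as a composition factor of $\Delta(\mu)$, so $\lambda \leq \mu$ in the original ordering; hence $\leq_{\mathrm{min}} \subseteq \leq$ as relations and antisymmetry is inherited.

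Axiom (c) then follows from axiom (b): by BGG reciprocity one has $[P(\lambda) : \Delta(\mu)] = [\Delta(\mu) : L(\lambda)]$, so the standards $\Delta(\mu)$ appearing in a $\Delta$-filtration of $\ker(P(\lambda) \to \Delta(\lambda))$ are precisely those with $\mu \neq \lambda$ and $L(\lambda)$ a composition factor of $\Delta(\mu)$, which axiom (b) would declare satisfy $\mu >_{\mathrm{min}} \lambda$.

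The main task is axiom (b): given that $L(\lambda)$ appears in $\mathrm{rad}(\Delta(\mu))$, produce a chain $\lambda = \lambda_0, \lambda_1, \ldots, \lambda_k = \mu$ with $\mathrm{Hom}(\Delta(\lambda_i), \Delta(\lambda_{i+1})) \neq 0$ for each $i$. My plan is strong induction on $\mu$ in the original order $\leq$. BGG reciprocity forces $[\Delta(\mu) : L(\mu)] = 1$, so $L(\mu)$ only appears at the head of $\Delta(\mu)$. I would choose a non-zero $f : P(\lambda) \to \Delta(\mu)$, which exists because $\dim \mathrm{Hom}(P(\lambda), \Delta(\mu)) = [\Delta(\mu) : L(\lambda)] > 0$; if $f$ factors through $\Delta(\lambda)$ we obtain $\mathrm{Hom}(\Delta(\lambda), \Delta(\mu)) \neq 0$ directly, so assume otherwise and restrict $f$ to the kernel $K = \ker(P(\lambda) \to \Delta(\lambda))$, which carries a $\Delta$-filtration with layers $\Delta(\nu)$ for $\nu > \lambda$.

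The technical crux is a filtration rearrangement: using the standard vanishing $\mathrm{Ext}^1(\Delta(\nu'), \Delta(\nu)) = 0$ unless $\nu > \nu'$, one can reorder the $\Delta$-filtration of $K$ to produce a sub-$\Delta$-filtered submodule $K(U) \subseteq K$ whose layers are exactly the $\Delta(\nu)$ with $\nu \geq \mu$. A short induction along this filtration would then show $f|_{K(U)} = 0$: a non-zero contribution from a $\Delta(\nu)$-layer with $\nu > \mu$ is ruled out because a non-zero hom $\Delta(\nu) \to \Delta(\mu)$ would require $\nu \leq \mu$, while a non-zero contribution from a $\Delta(\mu)$-layer would make $f$ surjective and force $L(\mu)$ to be the head of $P(\lambda)$, contradicting $\lambda \neq \mu$. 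Consequently $f$ factors through $K/K(U)$, whose layers are $\Delta(\nu)$ with $\nu \not\geq \mu$; the first layer on which the induced map is non-zero yields an index $\nu$ with $\lambda < \nu < \mu$ strictly in the original order, $[\Delta(\nu) : L(\lambda)] > 0$ from BGG reciprocity, and a non-zero map $\Delta(\nu) \to \Delta(\mu)$. The inductive hypothesis then gives $\lambda <_{\mathrm{min}} \nu <_{\mathrm{min}} \mu$. I expect the main obstacle to be precisely this filtration rearrangement and the head-argument ruling out contributions from $K(U)$, both of which lean crucially on the Ext-vanishing between standards and on the multiplicity-one fact $[\Delta(\mu):L(\mu)]=1$.
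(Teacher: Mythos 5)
Your argument is correct and rests on the same two pillars as the paper's proof — BGG reciprocity makes axioms (b) and (c) equivalent, and (b) is proved by an induction that extracts a non-zero, non-surjective map $\Delta(\nu)\to\Delta(\mu)$ with $[\Delta(\nu):L(\lambda)]=[P(\lambda):\Delta(\nu)]>0$ from a non-zero $f\colon P(\lambda)\to\Delta(\mu)$. Where you diverge is in how you find $\nu$: the paper simply fixes any $\Delta$-filtration of $P(\lambda)$ and restricts $f$ to the first step on which it is non-zero, whereas you first pass to $K=\ker(P(\lambda)\to\Delta(\lambda))$ and then reorder its $\Delta$-filtration using $\mathrm{Ext}^1(\Delta(\nu'),\Delta(\nu))=0$ for $\nu\not>\nu'$ so as to isolate a subobject $K(U)$ with layers indexed by $\nu\geq\mu$, on which you then show $f$ vanishes. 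This rearrangement is unnecessary: on whichever layer $\Delta(\nu)$ the restriction of $f$ is first non-zero, the induced map $\Delta(\nu)\to\Delta(\mu)$ already has image inside the proper submodule $\mathrm{im}(f)$ (so $\nu\neq\mu$, since a non-zero endomorphism of $\Delta(\mu)$ is surjective because $[\Delta(\mu):L(\mu)]=1$), and its non-vanishing already forces $L(\nu)$ to be a composition factor of $\Delta(\mu)$ and hence $\nu\leq\mu$; the case $\nu>\mu$ you labor to exclude via $K(U)$ can simply never occur. Two smaller differences are worth noting: you induct on $\mu$ in the original order $\leq$ while the paper inducts directly on $\leq_{\mathrm{min}}$ (both are valid; yours sidesteps having to first note that $\leq_{\mathrm{min}}$ is well-founded), and your explicit remark that $\leq_{\mathrm{min}}$ is a partial order because it is contained in $\leq$ is a useful point the paper leaves implicit.
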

\begin{proof}
This is essentially the same as the proof of Corollary 3.6 from \cite{GoSt}; we include the proof here for the reader's convenience.

Axioms (a) and (d) do not depend on the ordering, so hold for $(C,\Delta,\leq_{\mathrm{min}})$, and in the presence of BGG reciprocity axioms (b) and (c) are equivalent, so it suffices to check (b). This we do by induction on $\mu$. If $\mu$ is minimal for the ordering $\leq_{\mathrm{min}}$ then axiom (d) shows that $\Delta(\mu)=L(\mu)$ is simple so the induction begins. If $[\Delta(\mu):L(\lambda)] \neq 0$ with $\mu \neq \lambda$, then we obtain a non-zero map $P(\lambda) \rightarrow \Delta(\mu)$. This map cannot be surjective, for that would imply that the top $L(\mu)$ of $\Delta(\mu)$ is equal to the top $L(\lambda)$ of $P(\lambda)$. Since $P(\lambda)$ has a $\Delta$-filtration there is a non-zero, non-surjective map $\Delta(\nu) \rightarrow \Delta(\mu)$ for some $\nu$ with $[\Delta(\nu):L(\lambda)]=[P(\lambda):\Delta(\nu)] \neq 0$. Since the map is non-zero, $L(\nu)$ appears as composition factor of $\Delta(\mu)$, and since it is non-surjective $\nu \neq \mu$. By definition we have $\nu \leq_{\mathrm{min}} \mu$; since $\nu \neq \mu$ the inductive hypothesis shows that $\lambda \leq_{\mathrm{min}} \nu$, finishing the proof.
\end{proof}

%%%%%%%%%%%%%%%%%%%%%%%%%%%%%%%%%%%%%%%%%%%%%%%%%%%%%%%%%%%%%%

\section{Examples}

\subsection{$G=G(r,1,n)$} The group $G(r,1,n)$ is generated by the symmetric group $S_n$, acting on $\CC^n$ by permuting the coordinates, and the transformations $\zeta_i$ for $1 \leq i \leq n$ that multiply the $i$th coordinate by the primitive $r$th root of unity $\zeta=e^{2\pi  i /r}$. The classes of the reflections are: those containing $\zeta_1^j$, for $1 \leq j \leq r-1$, and the class containing the transposition $(12) \in S_n$.

The irreducible $\CC G(r,1,n)$-modules are labeled by $r$-tuples $\lambda=(\lambda^0,\lambda^1,\dots,\lambda^{r-1})$ of partitions with $n$ total boxes. The irreducible $S^{\lambda}$ may be constructed by inflating the module $S^{\lambda^0} \otimes S^{\lambda^1} \otimes \cdots \otimes S^{\lambda^{r-1}}$ from the product of symmetric groups $S_{n_0} \times S_{n_1} \times \cdots \times S_{n_{r-1}}$ to $G(r,1,n_0) \times G(r,1,n_1) \times \cdots \times G(r,1,n_{r-1}) \subseteq G(r,1,n)$ by asking all $\zeta_i$'s in the $j$th $G(r,1,n_j)$ to act by $\zeta^j$. We then put
$$S^{\lambda}=\mathrm{Ind}^{G(r,1,n)}_{G(r,1,n_0) \times G(r,1,n_1) \times \cdots \times G(r,1,n_{r-1})} S^{\lambda^0} \otimes \cdots \otimes S^{\lambda^{r-1}}.$$ 

A \emph{standard Young tableau} on $\lambda$ is a filling of its boxes (in other words, of the boxes of all the partitions $\lambda^k$) by the integers $1,2,\dots,n$ in such a way that in each component $\lambda^k$, the entries are strictly increasing left to right and top to bottom. We define the \emph{content} $\mathrm{ct}(b)$ of a box $b$ of $\lambda$ in the usual way, as $\mathrm{ct}(b)=i-j$ if $b$ appears in column $i$ and row $j$ of a component $\lambda^k$, and for each box $b$ of $\lambda^k$ we put $\beta(b)=k$. For each $1 \leq k \leq n$ let $$\phi_j=\sum_{\substack{1 \leq k < j \\ 0 \leq l \leq r-1}} \zeta_j^l s_{jk} \zeta_j^{-l}.$$ The module $S^\lambda$ has a basis $v_T$, indexed by standard Young tableaux $T$ on $\lambda$, with the property that 
$$\phi_j v_T=r \mathrm{ct}(T^{-1}(j)) v_T \quad \text{and} \quad \zeta_j v_T=e^{2 \pi i \beta(T^{-1}(j))/r}.$$ Furthermore, just as in the $S_n$-case, the restriction rule from $G(r,1,n)$ to $G(r,1,n-1)$ may be conveniently described in terms of this basis: for a fixed box $b \in \lambda$, the $\CC$-span of the $v_T$ with $T(b)=n$ is isomorphic to the irreducible module $S^{\lambda \setminus b}$ for $G(r,1,n-1)$ if $\lambda \setminus b$ is an $r$-partition (and is $0$ otherwise).

We write $c_0$ for the parameter $c_s$ for any reflection $s$ in the class $\zeta_k^l s_{jk} \zeta_k^{-l}$, and $c_l$ for the parameter $c_s$ for the class $\zeta_k^l$. If we define
$$d_j=\sum_{1 \leq l \leq r-1} \zeta^{lj} c_l \quad \hbox{for $0 \leq j \leq r-1$}$$ then a quick calculation using this version of Young's orthonormal form for $S^{\lambda^\bullet}$ gives
$$c_{\lambda^\bullet}=r{n \choose 2}c_0+n d_0 - \sum_{b \in \lambda^\bullet} \left(\mathrm{ct}(b)r c_0 + d_{\beta(b)} \right).$$ If we define the \emph{charged content} $c(b)$ of a box $b \in \lambda^\bullet$ by the equation
$$c(b)=\mathrm{ct}(b) rc_0+d_{\beta(b)},$$ then we can rewrite this yet again as
\begin{equation} \label{c function as charged contents}
c_{\lambda^\bullet}=r {n \choose 2} c_0+nd_0-\sum_{b \in \lambda^\bullet} c(b).
\end{equation} Thus, up to sign and a term that is independent of $\lambda^\bullet$, the $c$ function is the sum of the charged contents of the boxes of $\lambda^\bullet$.

Write $\mathrm{SYT}(\lambda)$ for the set of all standard Young tableaux on $\lambda$. Now Theorem \ref{standard recursion} may be applied for the chain $$G(r,1,n) \supseteq G(r,1,n-1) \supseteq G(r,1,n-2) \supseteq \cdots$$ to obtain:
\begin{theorem} \label{gr1n}
If there is a non-zero morphism $\Delta_c(\lambda) \rightarrow \Delta_c(\mu)$ then there are $T \in \mathrm{SYT}(\lambda)$ and $U \in \mathrm{SYT}(U)$ with 
$$c(U^{-1}(i))-c(T^{-1}(i)) \in \ZZ_{\geq 0} \quad \text{and} \quad c(U^{-1}(i))-c(T^{-1}(i))=\beta(U^{-1}(i))-\beta(T^{-1}(i)) \ \mathrm{mod} \ r.$$ 
\end{theorem}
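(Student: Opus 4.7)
The plan is to apply Theorem \ref{standard recursion} iteratively along the flag $V_i=\{x_{i+1}=\cdots=x_n=0\}$, peeling off one coordinate, and hence one box of $\lambda$ and one of $\mu$, per step. At step $i$ (for $i=1,\dots,n$) I would have on hand a non-zero morphism $\Delta_c(S^{\lambda^{i-1}})\to \Delta_c(S^{\mu^{i-1}})$ of $H_c(G(r,1,n-i+1),\CC^{n-i+1})$-modules, where $\lambda^0=\lambda$, $\mu^0=\mu$, and $\lambda^{i-1},\mu^{i-1}$ are $r$-partitions of $n-i+1$. The relevant one-dimensional stratum is $S^{(i)}=\{(0,\dots,0,v)\}\subset \CC^{n-i+1}$, whose pointwise stabilizer is $G(r,1,n-i)$ and whose setwise stabilizer is $N_{S^{(i)}}=G(r,1,n-i)\times \la \zeta_{n-i+1}\ra$, so $n_{S^{(i)}}=r$ and the distinguished character $L$ of $N_{S^{(i)}}$ sends $\zeta_{n-i+1}\mapsto e^{2\pi i/r}$.

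Applying Theorem \ref{standard recursion} at step $i$ produces irreducibles $S^{\chi_i},S^{\nu_i}$ of $N_{S^{(i)}}$, an integer $a_i=(c_{\lambda^{i-1}}-c_{\mu^{i-1}})-(c_{\chi_i}-c_{\nu_i})\in \ZZ_{\geq 0}$, and a non-zero map $\Delta_c(S^{\chi_i}\otimes L^{\otimes a_i})\to \Delta_c(S^{\nu_i})$ of $H_c(G(r,1,n-i),\CC^{n-i})\rtimes N_{S^{(i)}}$-modules. The branching rule for $G(r,1,n-i+1)\supseteq G(r,1,n-i)$ together with the fact that $\zeta_{n-i+1}$ acts on the $b$-summand of the restriction by $e^{2\pi i \beta(b)/r}$ identifies $S^{\chi_i}\cong S^{\lambda^i}\otimes L^{\otimes \beta(b_i)}$ for a uniquely determined removable box $b_i$ of $\lambda^{i-1}$ with $\lambda^i=\lambda^{i-1}\setminus b_i$, and analogously $S^{\nu_i}\cong S^{\mu^i}\otimes L^{\otimes \beta(b_i')}$. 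Setting $T^{-1}(n-i+1)=b_i$ and $U^{-1}(n-i+1)=b_i'$ builds the required standard Young tableaux inductively. A short computation using \eqref{c function as charged contents} shows that the $r\binom{m}{2}c_0+md_0$-terms cancel, giving $a_i=c(b_i')-c(b_i)$, which is the positivity $c(U^{-1}(n-i+1))-c(T^{-1}(n-i+1))\in \ZZ_{\geq 0}$.

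For the mod-$r$ congruence, I would observe that $\zeta_{n-i+1}$ acts trivially on the polynomial ring $\CC[(S^{(i)})^\perp]$, since $(S^{(i)})^\perp$ is spanned by $x_1,\dots,x_{n-i}$, and commutes with $G(r,1,n-i)$; hence $\zeta_{n-i+1}$ acts by a single scalar on each relevant standard module, equal to its eigenvalue on the top. The source has $\zeta_{n-i+1}$-eigenvalue $e^{2\pi i(\beta(b_i)+a_i)/r}$ and the target has $e^{2\pi i \beta(b_i')/r}$, so non-vanishing of the $N_{S^{(i)}}$-equivariant map forces $\beta(b_i)+a_i\equiv \beta(b_i')\pmod r$, which combined with $a_i=c(b_i')-c(b_i)$ is the required congruence. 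To close the induction I would forget the $\la \zeta_{n-i+1}\ra$-action and view the map as a non-zero morphism $\Delta_c(S^{\lambda^i})\to \Delta_c(S^{\mu^i})$ of $H_c(G(r,1,n-i),\CC^{n-i})$-modules, using that $S^{\chi_i}\otimes L^{\otimes a_i}\cong S^{\lambda^i}$ as $G(r,1,n-i)$-modules.

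The main obstacle I anticipate is the identification of $S^{\chi_i}$ and $S^{\nu_i}$ in the exact tensor form $S^\bullet\otimes L^{\otimes \beta(b)}$: this requires the explicit Young-basis description recalled earlier in the section, in particular that $\zeta_{n-i+1}$ acts as the scalar $e^{2\pi i \beta(b_i)/r}$ on the span of basis vectors $v_T$ with $T(b_i)=n-i+1$. Once this is in place, everything else is a bookkeeping exercise: Theorem \ref{standard recursion} is applied along the concrete tower $G(r,1,n)\supseteq G(r,1,n-1)\supseteq \cdots$, and the two conditions produced at each step of the recursion package together, via $T^{-1}(k)=b_{n-k+1}$ and $U^{-1}(k)=b'_{n-k+1}$, into the statement of Theorem \ref{gr1n}.
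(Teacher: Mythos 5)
Your proof is correct and is exactly the argument the paper intends: the paper states Theorem \ref{gr1n} immediately after saying ``Theorem \ref{standard recursion} may be applied for the chain $G(r,1,n) \supseteq G(r,1,n-1) \supseteq \cdots$'' without writing out the iteration, and you have supplied precisely that iteration, including the two key bookkeeping facts --- the cancellation of the $r\binom{m}{2}c_0+md_0$ terms giving $a_i = c(b_i')-c(b_i)$, and the $\zeta_{n-i+1}$-eigenvalue comparison giving the mod $r$ congruence --- both of which check out against the conventions fixed earlier in the section.
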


Each maximal parabolic subgroup of $G(r,1,n)$ is conjugate to one corresponding to a stratum of the form $$S=\{x_1=x_2=\cdots=x_k=0 \ \text{and} \ x_{k+1}=x_{k+2}=\dots=x_n\}$$ where $0 \leq k \leq n-1$, in which case $G_S=G(r,1,k) \times S_{n-k}$. In each case the normalizer is a direct product
$$N_S=G_S \times \la \zeta \ra,$$ where $\zeta=e^{2 \pi  i /r}$ is the scalar matrix generating the center of $G(r,1,n)$. So the irreducible representations of $N_S$ may be indexed by pairs $(M,\zeta^j)$ corresponding to an irreducible representation $M$ of $G_S$ and an integer $0 \leq j \leq r-1$, with $\zeta$ acting by $\zeta^j$ on $M$.

Consider first the case $k=n-1$: restriction from $G(r,1,n)$ to $G(r,1,n-1) \times S_1=G(r,1,n-1)$. Given an $r$-tuple of partitions $\lambda^\bullet=(\lambda^0,\lambda^1,\dots,\lambda^{r-1})$, the restriction of the irreducible $G(r,1,n)$-module $S^{\lambda^\bullet}$ to $G(r,1,n-1)$ is
$$\mathrm{res}^{G(r,1,n)}_{G(r,1,n-1)} S^{\lambda^\bullet}=\bigoplus_{\nu^\bullet=\lambda^\bullet \setminus b} S^{\nu^\bullet}$$ where the sum is over all removable boxes $b$ of $\lambda^\bullet$ (a box is removable if it is a removable box of $\lambda^i$ for some $i$). Each summand is also an irreducible $N_S=G(r,1,n-1) \times \la \zeta \ra$-module, with $\zeta$ acting by the scalar
$$\zeta \longmapsto \zeta^{\sum_{b \in \lambda^\bullet} \beta(b)}.$$ Thus if $S=\{(0,0,\dots,0,x) \} \nsubseteq \mathrm{supp}(L_c(\lambda^\bullet))$ then Theorem \ref{main} implies that for each removable box $b$ of $\lambda^\bullet$, there is some $\mu^\bullet >_c \lambda^\bullet$ with a removable box $b' \in \mu^\bullet$ such that $\lambda^\bullet \setminus b=\mu^\bullet \setminus b'=\nu^\bullet$ and
$$\zeta^{\sum_{b \in \lambda^\bullet} \beta(b)} e^{2 \pi  i  (c_{\lambda^\bullet}-c_{\nu^\bullet})/r}=\zeta^{\sum_{b \in \mu^\bullet} \beta(b)} e^{2 \pi  i  (c_{\mu^\bullet}-c_{\nu^\bullet})/r}.$$ Using the formula \eqref{c function as charged contents} we obtain
\begin{corollary}
If $L_c(\lambda^\bullet)$ is not supported on $S=\{(0,0,\dots,0,x) \}$ then for every removable box $b \in \lambda^\bullet$, there is an addable box $b'$ with
\begin{enumerate}
\item[(1)] $c(b)-c(b') \in \ZZ_{>0}$, and
\item[(2)] $c(b)-c(b')=\beta(b)-\beta(b')$ mod $r$.
\end{enumerate}
\end{corollary}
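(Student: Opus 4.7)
The plan is to apply Theorem \ref{main} directly, with the one-dimensional set $S=\{(0,\dots,0,x) : x \in \CC^\times\}$ chosen so that $G_S=G(r,1,n-1)$ and $N_S=G_S\times\la \zeta \ra$ with $n_S=r$, as in the discussion preceding the statement.

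First I would make the branching $\Res^{G(r,1,n)}_{N_S} S^{\lambda^\bullet}$ completely explicit. As a $G_S$-module it decomposes via the standard rule $\bigoplus_b S^{\lambda^\bullet\setminus b}$, where $b$ ranges over the removable boxes of $\lambda^\bullet$, and on each summand the generator of the cyclic factor $N_S/G_S$ acts through the character recorded in the discussion just before the corollary. The key observation is that as $N_S$-modules, these summands are pairwise non-isomorphic, and the same branching (twisted by the one-dimensional character $L^{\otimes(c_{\mu^\bullet}-c_{\lambda^\bullet})}$) computes the restriction of each generator of the Serre subcategory appearing in Theorem \ref{main}.

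Next I would invoke Theorem \ref{main}: the hypothesis $S\nsubseteq\mathrm{supp}(L_c(\lambda^\bullet))$ implies that $\Res^{G}_{N_S} S^{\lambda^\bullet}$ lies in the Serre subcategory of $\CC N_S$-mod generated by $L^{\otimes(c_{\mu^\bullet}-c_{\lambda^\bullet})}\otimes \Res^{G}_{N_S} S^{\mu^\bullet}$ with $\mu^\bullet >_c \lambda^\bullet$. Since all modules in sight are semisimple (characteristic zero), belonging to this Serre subcategory is equivalent to every simple summand on the left appearing as a simple summand on the right. Fixing a removable box $b\in\lambda^\bullet$, the branching description from the previous step forces the existence of some $\mu^\bullet>_c\lambda^\bullet$ with a removable box $b'$ satisfying $\mu^\bullet\setminus b' = \lambda^\bullet\setminus b$ (so that $b'$ is addable to $\lambda^\bullet\setminus b$) and such that the actions of $\zeta$ match after accounting for the twist by $L^{\otimes(c_{\mu^\bullet}-c_{\lambda^\bullet})}$; this yields the displayed character-matching equation preceding the corollary.

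The last step is a bookkeeping translation into the charged-content language. Since $\mu^\bullet$ and $\lambda^\bullet$ share all boxes except $b$ and $b'$, formula \eqref{c function as charged contents} gives $c_{\mu^\bullet}-c_{\lambda^\bullet}=c(b)-c(b')$, which turns $\mu^\bullet>_c\lambda^\bullet$ into condition (1). Similarly $\sum_{b''\in\mu^\bullet}\beta(b'')-\sum_{b''\in\lambda^\bullet}\beta(b'')=\beta(b')-\beta(b)$; substituting these into the character-matching equation and equating $r$-th roots of unity yields condition (2). The main thing to be careful about is tracking signs and directions of subtraction consistently (Theorem \ref{main} is stated with $\mu$ larger than $\lambda$, while the condition is phrased on $\lambda$), and verifying that the branching summands really are non-isomorphic so that one may match irreducible constituents individually; modulo these points, the argument is a direct application of Theorem \ref{main} combined with the standard branching rule for $G(r,1,n)\supseteq G(r,1,n-1)$.
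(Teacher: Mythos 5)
Your proposal is correct and follows essentially the same route as the paper: decompose $\Res^{G(r,1,n)}_{N_S} S^{\lambda^\bullet}$ via the branching rule into the pairwise non-isomorphic summands $S^{\lambda^\bullet\setminus b}$, apply Theorem \ref{main} using semisimplicity to match simple constituents, and translate $c_{\mu^\bullet}-c_{\lambda^\bullet}=c(b)-c(b')$ and the $\zeta$-character equation into conditions (1) and (2) via formula \eqref{c function as charged contents}. The paper does not explicitly flag the non-isomorphism of the branching summands, but you are right that it is the point that lets one match constituents box-by-box; the rest of your bookkeeping agrees with the paper's.
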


For general $k$, the restriction from $G(r,1,n)$ to $G(1,1,k) \times G(r,1,n-k)$ involves the Littlewood-Richardson coefficients and their generalizations to $r$-tuples of partitions. We do not go into details here; a consequence is that if one $d_i$ is much bigger than the rest then for $L_c(\lambda^\bullet)$ to be finite dimensional we must have $\lambda^i=\emptyset$.

\subsection{The spherical irreducible $L_c(\mathrm{triv})$.} Etingof's paper \cite{Eti} classifies when $L_c(\mathrm{triv})$ is finite dimensional for real reflection groups. Here we discuss the case of the groups $G(r,1,n)$. We begin with the following lemma, valid for an arbitrary reflection group $G$.
\begin{lemma}
Let $S \subseteq V$ be a stratum with corresponding parabolic subgroup $G_S$, and let $V'$ be the orthogonal complement to $\overline{S}$ in $V$. If $L_c(\mathrm{triv},G_S,V')$ is finite dimensional then the support of $L_c(\mathrm{triv})$ for $H_c(G,V)$ is contained in $\bigcup_{g \in G} g(S)$.
\end{lemma}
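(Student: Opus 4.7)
\emph{Proof plan.} My plan is to apply the parabolic degeneration $\mathrm{Deg}_{S,p}$ at a generic point $p\in S$ in order to convert the finite-dimensionality hypothesis on $L_c(\mathrm{triv},G_S,V')$ into a local description of $\mathrm{supp}(L_c(\mathrm{triv},G,V))$ in a neighborhood of $p$. Since the support of a module in $\OO_c$ is closed and $G$-stable, this local statement immediately yields the global containment by sweeping $p$ across $S$ and taking $G$-translates. Throughout, I read the conclusion of the lemma as the containment $\mathrm{supp}(L_c(\mathrm{triv},G,V))\subseteq \bigcup_{g\in G}g(\overline{S})$, which is equivalent to the stated one because the support is closed.

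First I would identify the relevant piece of the degeneration of the polynomial representation $\Delta_c(\mathrm{triv},G,V)=\CC[V]$. Because $\mathrm{res}^G_{G_S}(\mathrm{triv})=\mathrm{triv}$, the computation of Lemma~\ref{degen sum}, extended from $1$-dimensional strata to a general stratum $S$, picks out $\Delta_c(\mathrm{triv},G_S,V')$ as the trivial-isotypic summand of $\mathrm{Deg}_{S,p}(\Delta_c(\mathrm{triv},G,V))$; the extra terms from $r\in R\setminus R_S$ in Lemma~\ref{stratum Dunkl} disappear on the associated graded, exactly as in the $1$-dimensional case. Applying the right-exact functor $\mathrm{Deg}_{S,p}$ to the surjection $\Delta_c(\mathrm{triv},G,V)\twoheadrightarrow L_c(\mathrm{triv},G,V)$, the trivial-isotypic part of $\mathrm{Deg}_{S,p}(L_c(\mathrm{triv},G,V))$ is a quotient of $\Delta_c(\mathrm{triv},G_S,V')$ having $L_c(\mathrm{triv},G_S,V')$ as its simple top.

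Second, I would perform the support comparison. The defining formula \eqref{parabolic degeneration definition} realizes $\mathrm{Deg}_{S,p}(M)$ as a direct sum of spaces of local solutions of the normal connection, and this identifies the support of $\mathrm{Deg}_{S,p}(M)$ inside $V'$ with the transverse slice of $\mathrm{supp}(M)$ to $\overline{S}$ at $p$; this is essentially the content of the comparison with Bezrukavnikov--Etingof parabolic restriction sketched in \S\ref{BezEt}. By hypothesis $\mathrm{supp}(L_c(\mathrm{triv},G_S,V'))=\{0\}$, so the transverse slice of $\mathrm{supp}(L_c(\mathrm{triv},G,V))$ at $p$ is $\{0\}$, i.e.\ the support is contained in $\overline{S}$ inside a small Zariski neighborhood of $p$ in $V$. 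Running $p$ over $S$, invoking $G$-equivariance, and using closedness of support then delivers the desired conclusion. The main obstacle is step two: making the transverse-slice identification of supports via $\mathrm{Deg}_{S,p}$ rigorous requires a careful analysis of the $F^\bullet$-filtration and of the passage from analytic solution spaces back to the algebraic support, and the cleanest route is probably to pass through BE parabolic restriction in the framework of \S\ref{BezEt}, where the slicing principle for supports is standard.
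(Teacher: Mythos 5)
The paper's proof of this lemma is a one-line citation to Lemma~2.9 of \cite{BGS}, which packages the behaviour of supports under Bezrukavnikov--Etingof parabolic restriction. Your plan is genuinely different in spirit: it tries to get the result from the paper's own parabolic degeneration functor $\mathrm{Deg}_{S,p}$. It contains, however, two gaps, only the first of which you acknowledge.

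The first gap is the ``transverse-slice'' identification. The claim that $\mathrm{supp}(\mathrm{Deg}_{S,p}(M))$ computes the normal cone fibre of $\mathrm{supp}(M)$ at $p$ is nowhere proved in the paper; \S\ref{BezEt} explicitly defers the comparison between $\mathrm{Deg}_{S,p}$ and BE restriction to future work, and the easy direction one gets for free from passing to associated graded, namely $\mathrm{supp}(\mathrm{gr}_{I_S}M) \subseteq C_{\overline{S}}(\mathrm{supp}\,M)$, points the wrong way for your argument: you need $\mathrm{Deg}_{S,p}$ to \emph{detect} every transverse direction in the support, which is precisely the support-theoretic input that one usually imports from BE restriction.

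The second gap is more serious and is not flagged in your plan. Even granting the transverse-slice identification, Step~1 establishes only that $\mathrm{Deg}_{S,p}(L_c(\mathrm{triv},G,V))$ is a quotient $Q$ of $\Delta_c(\mathrm{triv},G_S,V')$ whose simple top is $L_c(\mathrm{triv},G_S,V')$. You then pass from the hypothesis $\mathrm{supp}(L_c(\mathrm{triv},G_S,V'))=\{0\}$ to the conclusion that the transverse slice of $\mathrm{supp}(L_c(\mathrm{triv},G,V))$ at $p$ is $\{0\}$, but this step tacitly replaces the support of $Q$ by the support of its top. A quotient of $\Delta_c(\mathrm{triv},G_S,V')$ surjecting onto $L_c(\mathrm{triv},G_S,V')$ can easily have further composition factors $L_c(\mu,G_S,V')$ with positive-dimensional support, and the hypothesis of the lemma says nothing about those. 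Controlling the composition factors of the restriction/degeneration of a simple module --- so that no unexpected large-support constituents appear --- is exactly the substantive input that the citation to \cite{BGS} supplies, and it cannot be read off from the degeneration machinery as developed in this paper. Your closing remark that ``the cleanest route is probably to pass through BE parabolic restriction'' effectively concedes this and collapses the plan back to the paper's own proof.
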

\begin{proof}
This is a corollary of Lemma 2.9 from \cite{BGS}.
\end{proof} The lemma may be used to prove that $L_c(\mathrm{triv})$ is finite dimensional in a number of cases. For $G=G(r,1,n)$ it gives
\begin{corollary} \label{suff condition}
For $G=G(r,1,n)$, suppose that either
\begin{enumerate}
\item[(a)] $r (n-1) c_0+d_0-d_j \in j+r \ZZ_{\geq 0}$ for some $1 \leq j \leq r-1$ or
\item[(b)] the following two conditions hold: $c_0=\ell/m$ with $2 \leq m \leq n$ and $(\ell,m)=1$ for positive integers $\ell$ and $m$ with $m$ a divisor of $n$, and furthermore
$$r (p-1) c_0+d_0-d_j \in j+r \ZZ_{\geq 0} \quad \hbox{for some $(n/m-1)m+1 \leq p \leq n$ and $1 \leq j \leq r-1$.}$$
\end{enumerate} Then $L=L_c(\mathrm{triv})$ is finite dimensional.
\end{corollary}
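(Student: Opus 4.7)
The plan is to apply the preceding lemma iteratively to cut down the support of $L = L_c(\mathrm{triv})$ to $\{0\}$, from which finite-dimensionality is immediate. For part (a), I would use the one-dimensional stratum $S = \CC^\times e_n$, whose parabolic subgroup is $G_S = G(r,1,n-1)$ acting on the orthogonal complement $V' = \CC^{n-1}$. The lemma says that if $L_c(\mathrm{triv}, G_S, V')$ is finite-dimensional, then the support of $L_c(\mathrm{triv})$ for $H_c(G(r,1,n), V)$ is contained in $\bigcup_{g \in G} g(\overline{S})$, i.e.\ in the union of coordinate axes. To further restrict the support to $\{0\}$, I would combine this with a second application of the lemma at the complementary one-dimensional stratum $T = \CC^\times(e_1+\cdots+e_n)$ with parabolic $G_T = S_n$ acting on the $(n-1)$-dimensional reflection representation; since $\bigcup g(\overline{S}) \cap \bigcup g(\overline{T}) = \{0\}$, the two constraints together force the support to be the origin.

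The main task is then to show that condition (a) implies finite-dimensionality on the appropriate parabolic Cherednik algebras. I would argue by induction on $n$, with base case $n = 1$: here $G(r,1,1) = \mu_r$ acts on $\CC$, condition (a) collapses to $d_0 - d_j \in j + r\ZZ_{\geq 0}$, and this is the standard criterion producing a singular vector in $\Delta_c(\mathrm{triv}) = \CC[x]$ for $H_c(\mu_r, \CC)$ and therefore a finite-dimensional quotient. The inductive step would use the explicit form of the $c$-function in \eqref{c function as charged contents} together with the key observation that the partition $\mu$ with $\mu^0 = (n-1)$, $\mu^j = (1)$ (the one whose $c$-value is $r(n-1)c_0 + d_0 - d_j$) restricts to the analogous ``small'' partition for $G(r,1,n-1)$, so that condition (a) at level $n$ propagates appropriately to the parabolic.

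For part (b), the assumption $c_0 = \ell/m$ with $m \mid n$ is precisely the classical condition ensuring $L_c(\mathrm{triv}, S_n, V^{\mathrm{refl}}_{S_n})$ is supported on the closure of the stratum of points with $n/m$ blocks of $m$ equal coordinates each (and equals $\{0\}$ when $m = n$). This provides the first support constraint via the lemma with the stratum $T = \CC^\times(e_1 + \cdots + e_n)$. The second part of (b), namely $r(p-1)c_0 + d_0 - d_j \in j + r\ZZ_{\geq 0}$ for $p \geq n-m+1$, is the analog of (a) at rank $p$ and is precisely what is needed to reduce $L_c(\mathrm{triv})$ on the ``$G(r,1,p)$-factor'' of the parabolic corresponding to a point with $n-p$ zero coordinates among its $n/m$ blocks; combined via the lemma, the intersection of support constraints reduces to $\{0\}$.

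The principal obstacle I anticipate is the inductive step in part (a): condition (a) at level $n$ reads $r(n-1)c_0 + d_0 - d_j = j + rk$ for some $k \geq 0$, while the analogous condition at level $n-1$ differs by $rc_0$, which is not obviously in $r\ZZ_{\geq 0}$. Bridging this gap requires either a finer induction hypothesis that simultaneously keeps track of the integer $k$ (noting that $k$ decreases by $c_0$ per step, which must remain non-negative), or a refined iterated application of the lemma across a whole chain of nested parabolics $G(r,1,n) \supseteq G(r,1,n-1) \supseteq \cdots$ so that the support constraints accumulate globally without requiring the inductive hypothesis on every single intermediate algebra. The analogous issue arises for part (b), where one must verify that the $m$-indexed block stratum and the $p$-indexed axis stratum have trivial intersection, which is clean when $p$ lies in the prescribed range but must be checked case-by-case.
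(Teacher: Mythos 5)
Your proposal takes a genuinely different route for part (a), and in that route there is a real gap that you yourself flag but do not close. The paper does \emph{not} prove (a) by any support argument at all: it cites \cite{Gri}, Theorem~7.5 (or \cite{EtMo}), which is a direct finite-dimensionality criterion for $L_c(\mathrm{triv})$ over $H_c(G(r,1,n),\CC^n)$ obtained from the existence of singular vectors in the polynomial representation. Your plan, by contrast, tries to bootstrap finite-dimensionality from the $n=1$ case using the lemma and an induction on $n$; this is not the same thing, and the lemma alone can never \emph{establish} finite-dimensionality of $L_c(\mathrm{triv},G_S,V')$, only \emph{use} it. The inductive step is the obstacle: condition (a) at level $n$ reads $r(n-1)c_0+d_0-d_j\in j+r\ZZ_{\geq 0}$, but the corresponding condition at level $n-1$ differs by $rc_0$, and there is no reason for $rc_0$ to lie in $r\ZZ_{\geq 0}$ (indeed when $c_0$ is a nontrivial rational the difference is not even an integer multiple of $r$). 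So the hypothesis on the parabolic level needed for the lemma to apply is simply not available, and the "finer induction hypothesis that keeps track of $k$" you allude to does not exist. Similarly, the auxiliary constraint at $T=\CC^\times(e_1+\cdots+e_n)$ would require $L_c(\mathrm{triv},S_n,V^{\mathrm{refl}})$ to be finite dimensional, which is a condition on the denominator of $c_0$ and is emphatically not implied by condition (a).

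For part (b) your outline is closer in spirit to what the paper does: apply the lemma to the block stratum (using the $S_n$ classification, with $c_0=\ell/m$, $m\mid n$) to get support inside the closure of the block locus, then combine with a second support constraint coming from the stratum $\{x_1=\cdots=x_p=0\}$ whose parabolic is $G(r,1,p)$. But for that second constraint the paper again invokes \cite{Gri}, Theorem~7.5 to get finite-dimensionality of $L_c(\mathrm{triv})$ for $H_c(G(r,1,p),\CC^p)$ under the hypothesis $r(p-1)c_0+d_0-d_j\in j+r\ZZ_{\geq 0}$; your version would rely on the same unworkable induction. Also note that the support constraints intersect in $\{0\}$ by the elementary observation that if the $n-p<m$ non-zero coordinates must form whole $m$-blocks, there can be none; this is what forces the bound $p\geq (n/m-1)m+1=n-m+1$, a point you only gesture at. In short: your strategy for combining support constraints in (b) is sound, but the finite-dimensionality input that both parts require is a substantive theorem from \cite{Gri}/\cite{EtMo}, not something that can be derived by iterating the lemma, and the gap you flag in the induction is fatal rather than a technicality to be patched.
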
 \begin{proof}
With the assumption in part (a), this follows from \cite{Gri}, Theorem 7.5, or from \cite{EtMo}. In (b), our assumption on $c_0$, the classification of finite dimensional modules for $S_n$, and the lemma above imply that the support of $L$ is contained in the set of points whose coordinates may be divided up into $n/m$ subsets of size $m$, with the coordinates belonging to a given subset all equal to one another. As above, \cite{Gri}, Theorem 7.5 and the lemma above then combine to show that if 
$$r (p-1) c_0+d_0-d_j \in j+r \ZZ_{\geq 0} \quad \hbox{for some $n/m \leq p \leq n$ and $1 \leq j \leq r-1$}$$ then the support of $L$ is contained in the set of points with at least $p$ zeros among their coordinates. Putting these two facts together proves the corollary.
\end{proof} In case $r=2$, the corollary produces all $c$'s for which $L_c(\mathrm{triv})$ can be finite dimensional. 
\begin{question} 
If $L_c(\mathrm{triv})$ is finite dimensional, do the conditions given in Corollary \ref{suff condition} hold for $r>2$? 
\end{question}

\subsection{$G=G(2,1,2)=W(B_2)$}
 This example is of course well known; we work it out in detail here to illustrate the methods we have applied later on for the exceptional groups. Since there is an involution of the Coxeter diagram, we need to distinguish between long and short roots (the parametrization of the characters depends on this). We will denote the parabolic subgroup generated by a simple reflection associated with the long (resp. short) simple root by $A_1$ (resp. $\widetilde A_1$). In each case, the normalizer of the parabolic subgroup is the product of the parabolic subgroup with the center $\{1, w_0\}$, where $w_0 = -1$. The tables below give the restrictions of the simple $\CC G$-modules to each maximal parabolic subgroup. Since we also give the action of $w_0$, one can deduce the restriction to the normalizer.
 
Since there are two conjugacy classes of reflections, the parameter space is two dimensional. We denote by $c_1$ (resp. $c_2$)  the parameter of the class of reflections corresponding to short (resp. long) roots.
For a given $c = (c_1, c_2)$, let $\prec_c$ be the order on $\Lambda$ defined by $\lambda \prec_c \mu$ if
$c_\mu - c_\lambda \in \RR_{>0}$ (rather than $\ZZ_{>0}$ as in the definition of $<_c$).
We have ordered the rows of the tables below so that if $\mu \succ_c \lambda$, then the $\mu$ row is higher than the $\lambda$ row. Since the order depends on $c$, we have to make a table for each possible $\prec_c$-order. In general, one can define an equivalence relation on the space of $c$ parameters, such that two parameters are equivalent if they give rise the same partial order $\prec_c$. We call the equivalence classes \emph{regions}. For a region $D$, we denote by $\prec_D$ the common order $\prec_c$ for $c\in D$. We may restrict to parameters $c$, with $c_1$, $c_2 > 0$. For $G = W(B_2)$, there are three regions in the upper right quadrant: $D^- := \{c_1 < c_2\}$, $D^0 := \{c_1 = c_2\}$, and $D^+ := \{c_1 > c_2\}$. 

Let us explain in detail how to read off from the tables what our criterion says.
For a given simple module $L_c(\lambda)$, the central element $\sum_{s\in R}c_r(1-r)$ acts by a scalar which is
a linear function $a_1(\lambda) c_1 + a_2(\lambda) c_2$ of the parameters. The coefficients $a_1(\lambda)$ and $a_2(\lambda)$ will be displayed in the first column, for the generic regions $D^-$ and $D^+$. On the critical line $D^0$, the relevant information is rather the sum $a(\lambda) = a_1(\lambda) + a_2(\lambda)$. 
The second column gives the integer $m(\lambda) \in \{0,1\}$ such that the central element $w_0$ acts by $(-1)^{m(\lambda)}$ on $S^\lambda$.
%The sgn column gives $m\in\{0,1\}$ such that $w_0$ acts by $(-1)^m$ on $S^\lambda$.
The next column indicates the bipartition $\lambda$ indexing the simple $\CC G$-module (the two parts of $\lambda$ are separated by a dot). The table then gives the restriction of $S^\lambda$ to each parabolic subgroup. 

Fix a region $D$.
By Corollary \ref{simplemain}, if $L_c(\lambda)$ is finite dimensional for some value of $c\in D$, then for each column $\nu$ such that the entry $(\lambda, \nu)$ of the table is non-zero, there must exist a row $\mu$, with $\mu\succ_{D}\lambda$, such that the entry $(\mu, \nu)$ of the table is also non-zero.
%then the row $\lambda$ has to be a $\ZZ$-linear combination of rows $\mu$ with $\mu \succ_D \lambda$.
We highlighted the maximal (for the $\prec_D$ order) non-zero entries in each column. If there is any highlighted entry in the row $\lambda$, then the module $L_c(\lambda)$ is infinite dimensional for all $c\in D$.

This first step, using only $\prec_D$, discards many simple modules $S^\lambda$. For the ones that are not discarded, using $<_c$ will give conditions on $c\in D$ for $L_c(\lambda)$ to be finite dimensional. Assume $\dim L_c(\lambda) < \infty$. So there is no highlighted entry in row $\lambda$: For each non-zero entry in the row, say in column $\nu$ for some parabolic subgroup, there has to be at least one $\mu$ such that $\mu \succ_D \lambda$. But the Theorem says more: for at least one of those $\mu$'s, we must also have
\[
(a_1(\mu) - a_1(\lambda))c_1 + (a_2(\mu) - a_2(\lambda))c_2 \in 2\ZZ + m(\lambda).
\] The examples below illustrate how this is applied.

\subsubsection{$D^- = \{c_1 < c_2\}$}

\[
\begin{array}{cccc|cc|cc|}
\cline{5-8}
&&&&\multicolumn{2}{c|}{A_1}&\multicolumn{2}{c|}{\raisebox{1.25em}{}\widetilde A_1}
\\
\hline \multicolumn{1}{|c}{a_1}&
\multicolumn{1}{c|}{a_2}&
\multicolumn{1}{c|}{\text{sgn}}&
\multicolumn{1}{c|}{\lambda\backslash \nu}&
11&2&11&2
\\
\hline
\multicolumn{1}{|c}{4}&
\multicolumn{1}{c|}{4}&
\multicolumn{1}{c|}{0}&
\multicolumn{1}{c|}{.11}&
\highlight{1}&.&\highlight{1}&.\\
\multicolumn{1}{|c}{0}&
\multicolumn{1}{c|}{4}&
\multicolumn{1}{c|}{0}&
\multicolumn{1}{c|}{11.}&
1&.&.&\highlight{1}\\
\multicolumn{1}{|c}{2}&
\multicolumn{1}{c|}{2}&
\multicolumn{1}{c|}{1}&
\multicolumn{1}{c|}{1.1}&
1&\highlight{1}&1&1\\
\multicolumn{1}{|c}{4}&
\multicolumn{1}{c|}{0}&
\multicolumn{1}{c|}{0}&
\multicolumn{1}{c|}{.2}&
.&1&1&.\\
\multicolumn{1}{|c}{0}&
\multicolumn{1}{c|}{0}&
\multicolumn{1}{c|}{0}&
\multicolumn{1}{c|}{2.}&
.&1&.&1\\
\hline
\end{array}
\]

Looking at the highlighted entries, we see that the modules $L_c(\lambda)$, for $\lambda\in\{.11, 1.1, 11.\}$, are infinite dimensional for all $c\in D^-$. So the only possible finite dimensional modules are $L_c(2.)$ and $L_c(.2)$.

Now assume $L_c(2.)$ is finite dimensional for some $c\in D^-$. The column labelled $2$ for $A_1$ gives:
\[
c_1 + c_2 \in \ZZ + \frac 1 2 \text{ or } c_1 \in \frac 1 2 \ZZ
\]
while the column labelled $2$ for $\widetilde A_1$ gives:
\[
c_1 + c_2 \in \ZZ + \frac 1 2 \text{ or } c_2 \in \frac 1 2 \ZZ.
\]
Since both have to be satisfied, our criterion is:
\[
c_1 + c_2 \in \ZZ + \frac 1 2 \text{ or } \left(c_1 \in \frac 1 2 \ZZ \text{ and } c_2 \in \frac 1 2 \ZZ\right).
\]
So we find a union of lines and points. Comparing with \cite{Chm} (see also \cite{Eti}), we see that our necessary condition is sufficient for the lines, while among the remaining points we are not able to discard the points with both coordinates in $\ZZ$.

Next, assume $L_c(.2)$ is finite dimensional for some $c\in D^-$. The column labelled $2$ for $A_1$ gives:
\[
-c_1 + c_2 \in \ZZ + \frac 1 2
\]
while the column labelled $11$ for $\widetilde A_1$ gives:
\[
-c_1 + c_2 \in \ZZ + \frac 1 2 \text{ or } c_2 \in \frac 1 2 \ZZ.
\]
Since the first condition implies the second one, we can forget about the latter. Therefore we get
\[
-c_1 + c_2 \in \ZZ + \frac 1 2
\]
which, by \cite{Chm}, is also sufficient.

%Induction from parabolic number 2
%\[
%\begin{array}{|cc|c|c|cc|}
%\hline a_1&a_2&\text{sgn}&\lambda\backslash \mu&11&2\\\hline
%4&4&0&.11&1&.\\
%0&4&0&11.&.&1\\
%2&2&1&1.1&1&1\\
%4&0&0&.2&1&.\\
%0&0&0&2.&.&1\\
%\hline
%\end{array}
%\]

\subsubsection{$D^0 = \{c_1 = c_2\}$}

\[
\begin{array}{ccc|cc|cc|}
\cline{4-7}
&&&\multicolumn{2}{c|}{A_1}&\multicolumn{2}{c|}{\raisebox{1.25em}{}\widetilde A_1}
\\
\hline \multicolumn{1}{|c|}{a}&
\multicolumn{1}{c|}{\text{sgn}}&
\multicolumn{1}{c|}{\lambda\backslash \nu}&
11&2&11&2
\\
\hline
\multicolumn{1}{|c|}{8}&
\multicolumn{1}{c|}{0}&
\multicolumn{1}{c|}{.11}&
\highlight{1}&.&\highlight{1}&.\\
\multicolumn{1}{|c|}{4}&
\multicolumn{1}{c|}{0}&
\multicolumn{1}{c|}{11.}&
1&.&.&\highlight{1}\\
\multicolumn{1}{|c|}{4}&
\multicolumn{1}{c|}{1}&
\multicolumn{1}{c|}{1.1}&
1&\highlight{1}&1&\highlight{1}\\
\multicolumn{1}{|c|}{4}&
\multicolumn{1}{c|}{0}&
\multicolumn{1}{c|}{.2}&
.&\highlight{1}&1&.\\
\multicolumn{1}{|c|}{0}&
\multicolumn{1}{c|}{0}&
\multicolumn{1}{c|}{2.}&
.&1&.&1\\
\hline
\end{array}
\]

If $c = (c_1, c_1)$, then the only possible finite dimensional simple $H_c$-module is $L_c(2.)$ (all the other rows have some highlighted entry).
Looking at the $2$ column for $A_1$, we see that $4c_1$ has to be an integer, either even ($.2$ row) or odd
($1.1$ row). The $2$ column for $\widetilde A_1$ gives the same condition.

\subsubsection{$D^+ = \{c_1 > c_2\}$}

\[
\begin{array}{cccc|cc|cc|}
\cline{5-8}
&&&&\multicolumn{2}{c|}{A_1}&\multicolumn{2}{c|}{\raisebox{1.25em}{}\widetilde A_1}
\\
\hline \multicolumn{1}{|c}{a_1}&
\multicolumn{1}{c|}{a_2}&
\multicolumn{1}{c|}{\text{sgn}}&
\multicolumn{1}{c|}{\lambda\backslash\nu}&
11&2&11&2
\\
\hline
\multicolumn{1}{|c}{4}&
\multicolumn{1}{c|}{4}&
\multicolumn{1}{c|}{0}&
\multicolumn{1}{c|}{.11}&
\highlight{1}&.&\highlight{1}&.\\
\multicolumn{1}{|c}{4}&
\multicolumn{1}{c|}{0}&
\multicolumn{1}{c|}{0}&
\multicolumn{1}{c|}{.2}&
.&\highlight{1}&1&.\\
\multicolumn{1}{|c}{2}&
\multicolumn{1}{c|}{2}&
\multicolumn{1}{c|}{1}&
\multicolumn{1}{c|}{1.1}&
1&1&1&\highlight{1}\\
\multicolumn{1}{|c}{0}&
\multicolumn{1}{c|}{4}&
\multicolumn{1}{c|}{0}&
\multicolumn{1}{c|}{11.}&
1&.&.&1\\
\multicolumn{1}{|c}{0}&
\multicolumn{1}{c|}{0}&
\multicolumn{1}{c|}{0}&
\multicolumn{1}{c|}{2.}&
.&1&.&1\\
\hline
\end{array}
\]

Here the only possible finite dimensional simple $H_c$-modules are $L_c(2.)$ and $L_c(11.)$. This case is actually symmetric to the $D^-$-case, via the diagram automorphism. So for $c \in D^+$, our criterion says:
\[
L_c(2.) \text{ finite dimensional } \Longrightarrow c_1 + c_2 \in \ZZ + \frac 1 2 \text{ or } \left(c_1 \in \frac 1 2 \ZZ \text{ and } c_2 \in \frac 1 2 \ZZ\right)
\]
\[
L_c(11.) \text{ finite dimensional } \Longrightarrow c_1 - c_2 \in \ZZ + \frac 1 2
\]

We remark that, although the $\prec_D$ order depends on $D$, for a given simple $\CC G$-module we only care about its relative position with respect to the others. The trivial module will always be in the bottom row (since we assume $c_1, c_2 > 0$), so for the trivial module the division into three regions is irrelevant. For a general $G$, for each character there will be a unions of regions which give the same criterion (see also the case of $F_4$).

%\subsection{$G=G(2,1,3)=W(B_3)$} The maximal parabolic subgroups of $G(2,1,3)$ are $G(2,1,2)$, $G(1,1,3)$, $S_2\times G(2,1,1)$.

\subsection{Exceptional real groups} The next six sections deal with the exceptional real groups. We leave the rank $2$ groups aside, as they are to be discussed in the thesis of the second author. We note that if the center of a real group is of order $2$, then for each maximal parabolic $G_S$, the normalizer $N_S$ is a direct product $N_S=G_S \times \{ \pm 1\}$. This is the case in all of the following examples except $G=W(E_6)$, and makes the application of Theorem \ref{main} especially straightforward.
%  {\color{red}{\textbf ML: Make here a comment about the degrees.}}

We write $V$ for the reflection representation of $G$ throughout, and when no more familiar description is available, we will use the GAP notation $\phi_{x,y}$ for an irreducible $G$-module of dimension $x$ appearing in degree $y$ of the coinvariant algebra but not in lower degree. In the following lists, for groups $G$ with only one conjugacy class of reflections (all but $W(F_4)$) we assume that the corresponding parameter $c$ is positive. We assumed moreover that if $c$ is constant its denominator divides one of the degrees of $G$, otherwise, as shown in \cite{DJO}, category $\mathcal{O}_c$ would be semisimple. Any representation not appearing in the list for a given group can not be finite dimensional for any choice of $c>0$.

We have tried to list previously known results where they are available.

\subsection{$G=W(H_3)$}
\begin{enumerate}
\item[(1)] $L_c(\mathrm{triv})$ can be finite dimensional only if $10c$ is an odd positive integer or $6c$ is an integer.

\item[(2)] $L_c(V)$ and $L_c(\widetilde{V})$ can be finite dimensional only if $c$ is half an odd positive integer, where $\widetilde{V}$ is the Galois conjugate of $V$ by $\sqrt{5} \mapsto -\sqrt{5}$ .
\end{enumerate} In Theorem 3.1 of \cite{BaPu}, Balagovic and Puranik proved that $L_c(\mathrm{triv})$ is finite dimensional if and only if the denominator of $c$ is $10$, $6$, or $2$, while $L_c(V)$ and $L_c(\widetilde{V})$ are finite dimensional exactly when $c$ has denominator $2$.

\subsection{$G=W(H_4)$}
\begin{enumerate}[(1)]
\item If $L_c(\mathrm{triv})$ is finite dimensional, then the denominator of $c$ divides a degree and our criterion does not give any further restriction (Etingof \cite{Eti} proved that such a condition is also sufficient provided $c$ is not an integer).
\item If $L_c(V)$ or $L_c(\widetilde{V})$ is finite dimensional, then $10c \in \ZZ_{>0}$ or $15c \in \ZZ_{>0}$ or $6c \in \ZZ_{>0}$. Here $\widetilde{V}$ is the Galois conjugate of $V$.  According to \cite{Nor}, $L_{\frac{1}{d}}(V)$ is finite dimensional if $d\in\{3,5,6,10\}$ and, conjecturally, if $d=2$, while  $L_{\frac{1}{d}}(\widetilde{V})$ is finite dimensional if $d\in\{3,6,10,15\}$ and, conjecturally, if $d=2$. 
\item If $L_c(\phi_{9,2})$ or $L_c(\phi_{9,6})$ is finite dimensional, then $10c \in \ZZ_{>0}$ or $4c \in \ZZ_{>0}$. These are Galois conjugates. By \cite{Nor}, $L_{\frac{1}{d}}(\phi_{9,2})$ is finite dimensional if $d\in\{5,4\}$, while  $L_{\frac{1}{d}}(\widetilde{V})$ is finite dimensional if $d\in\{4,10\}$. Moreover, it is conjectured in \cite{Nor} that if $c=1/2$, then $L_c(\phi_{9,2})$ and $L_c(\phi_{9,6})$  have finite dimension. 
\item If $L_c(\phi_{16,3})$  is finite dimensional, then $3c \in 2\ZZ_{\geq 0} + 1$ or $5c\in 2\ZZ_{>0}$. In fact, by \cite{Nor} $L_{\frac{1}{3}}(\phi_{16,3})$ is finite dimensional. 
\item If $L_c(\phi_{16,6})$ is finite dimensional, then $3c \in 2\ZZ_{>0}$ or $5c \in \ 2\ZZ_{\geq 0} + 1$. According to \cite{Nor} $L_{\frac{1}{5}}(\phi_{16,3})$ is finite dimensional. 
\item If $L_c(\phi_{25,4})$ is finite dimensional, then $2c \in  2\ZZ_{\geq 0} + 1$.  In \cite{Nor} it is conjectured that $L_{\frac{1}{2}}(\phi_{25,4})$ is finite dimensional.
\end{enumerate}

\subsection{$G=W(F_4)$}

We assume $c_1, c_2 > 0$ (we recover the other cases by twisting by a linear character) and we set $\kappa = c_2 / c_1$. The critical slopes $\kappa$ where the $c$-order changes are:
1/5, 1/4, 1/3, 2/5, 1/2, 2/3, 1, 3/2, 2, 5/2, 3, 4, 5. However, if we fix one simple $G$-module, only part of these slopes will be relevant, i.e. the slopes where the relative position of this simple with respect to the others actually changes. This defines zones (angular sectors) in the space of parameters, or intervals in the $\kappa$ line. For example, the trivial module is always below all other modules, so in the spherical case there is only one zone to consider
($0< \kappa< +\infty$). We may also use the diagram symmetry: for a pair of characters exchanged by the symmetry, we only need to consider one of them; and for a character fixed by the symmetry, we may assume $c_1 \geq c_2$, i.e. $\kappa \leq 1$. 
% As a further reduction, up to the diagram symmetry automorphism of $G$, we may assume that $c_1 > c_2$, i.e. $\kappa \leq 1$. 

Since, in a given zone, the potential cuspidality only depends on the class of $(c_1, c_2)$ modulo $\ZZ^2$, it is more convenient to work with the parameters $q_j = e^{2i\pi c_j}$, $j = 1$, $2$. We will describe the locus of ``potential cuspidality'' in the $(q_1, q_2)$ plane (given by our criterion) by its defining ideal (for the reduced structure).  We used {\sf GAP3} \cite{GAP3} and {\sf MAGMA} \cite{MAGMA}.

The characters of $G$ for which there may be cuspidal modules for some values of $\kappa$ are:
$\phi_{1,0}$, $\phi_{1,12}'$, $\phi_{1,12}''$, $\phi_{2,4}'$, $\phi_{2,4}''$, $\phi_{4,1}$,
$\phi_{4,7}'$, $\phi_{4,7}''$, $\phi_{8,3}'$, $\phi_{8,3}''$, $\phi_{9,2}$. That's 11 out of 25, but there are really only 7 cases to consider, taking the diagram symmetry into account. We will give more details for the spherical case.

\subsubsection{$\phi_{1,0}$}

As we said before, in the spherical case there is only one zone: for all the slopes above, the position of $\phi_{1,0}$
relatively to other simple modules does not change (it is always at the bottom).
The locus of ``potential cuspidality'' has 19 components over $\QQ$ (which are the Galois orbits of the components over the algebraic closure): four one-dimensional and 15 zero-dimensional. Let $\Phi_j$ denote the $j$-th cyclotomic polynomial. The ideal of potential cuspidality is:

\[
\begin{array}{l}
\Phi_6(q_1q_2) \Phi_4(q_1q_2) \Phi_2(q_1q_2) \Phi_1(q_1q_2)\\
(\Phi_3(q_1), \Phi_1(q_2)) (\Phi_6(q_1), \Phi_2(q_2)) (q_1 - q_2, \Phi_3(q_2)) (q_1 - q_2, \Phi_6(q_2))\\
(\Phi_2(q_1), \Phi_6(q_2)) (\Phi_1(q_1), \Phi_3(q_2)) (\Phi_{12}(q_1), \Phi_2(q_2)) (\Phi_3(q_1), \Phi_4(q_2))\\
(q_1^2 - q_2 - 1, \Phi_3(q_2)) (\Phi_4(q_1), \Phi_3(q_2)) (q_1 - q_2^2 + 1, \Phi_{12}(q_2)) (\Phi_1(q_1), \Phi_{12}(q_2))\\
(q_1 + q_2^4, \Phi_{18}(q_2)) (q_1 - q_2^4 + q_2, \Phi_{18}(q_2)) (q_1 - q_2, \Phi_{18}(q_2))
\end{array}
\]

The first row gives the four one-dimensional components. In terms of $c$ parameters, those are the lines
$c_1 + c_2 = m/6$ with $m\in\ZZ$ not divisible by $3$, and the lines $c_1 + c_2 = m/4$ with $m \in \ZZ$.
Comparing with \cite{Eti}, we see that only the factor $\Phi_1(q_1q_2)$ is superfluous,
i.e. the lines $c_1 + c_2 = m$ with $m \in\ZZ$.

The rest gives the 15 (Galois orbits of) zero-dimensional components. For example, the first one is:
$q_1$ is a primitive third root of unity, and $q_2 = 1$; i.e. $c_1 = m/3$ with $m\in\ZZ$ not divisible $3$, and
$c_2\in\ZZ$. Comparing with \cite{Eti}, some are superfluous, while some do not appear because they are contained in the superfluous line mentioned above.

We can summarize the condition given by the necessary condition with the following picture. This is a $1 \times 1$ square in the $(c_1, c_2)$ plane (since $c_1$ and $c_2$ only matter modulo $1$). Green lines and dots
are those given by \cite{Eti} (i.e. the true answer). Our criterion is quite good for lines, since there is
only one irrelevant line ($c_1 + c_2 = 1$, or more generally $c_1 + c_2 \in \ZZ$). But there are many red dots!

In Etingof's notation:
\begin{itemize}
\item case 2a does not appear since this point is on the red line;
\item 2b is half on the red line, and the other points are $(q_1 - q_2, \Phi_3(q_2))$, i.e. $(1/3, 1/3)$ and $(2/3, 2/3)$;
\item 2c is both $(q_1^2 - q_2 - 1, \Phi_3(q_2))$ and $(q_1 - q_2^2 + 1, \Phi_{12}(q_2))$ (note that the Groebner basis algorithm breaks the symmetry);
\item 2d is $(q_1 - q_2, \Phi_6(q_2))$, i.e. $(1/6, 1/6)$ and $(5/6, 5/6)$
\end{itemize}

\begin{center}
\begin{tikzpicture}[scale=10]
%\path[use as bounding box]  (0,0) rectangle (2,2);
%Styles
\tikzstyle{axes}=[]
\tikzstyle{wall}=[thick]
\tikzstyle{relevant wall}=[very thick]
\tikzstyle{dot}=[fill]
\draw (0,0) grid (1,1);
%The graphic
\begin{scope}[style=axes]
\draw[->] (0,0) -- (1,0) node[right] {$c_1$} coordinate(c1 axis);
\draw[->] (0,0) -- (0,1) node[above] {$c_2$} coordinate(c2 axis);
\end{scope}

\begin{scope}[very thick,green,auto=left]
\draw (1/4, 0) -- (0, 1/4);
\draw (1/6, 0) -- (0, 1/6);
\draw (1/2, 0) -- (0, 1/2);
\draw (5/6, 0) -- (0, 5/6);
\draw (3/4, 0) -- (0, 3/4);
\draw[red] (1, 0) -- (0, 1);
\draw (1/4, 1) -- (1, 1/4);
\draw (1/6, 1) -- (1, 1/6);
\draw (1/2, 1) -- (1, 1/2);
\draw (5/6, 1) -- (1, 5/6);
\draw (3/4, 1) -- (1, 3/4);
\end{scope}

\begin{scope}[green]
\draw[fill] (1/2, 1/2) circle (.3pt);
\draw[fill] (1/3, 1/12) circle (.3pt);
\draw[fill] (1/3, 7/12) circle (.3pt);
\draw[fill] (2/3, 5/12) circle (.3pt);
\draw[fill] (2/3, 11/12) circle (.3pt);
\draw[fill] (1/6, 1/6) circle (.3pt);
\draw[fill] (5/6, 5/6) circle (.3pt);
\draw[fill] (1/3, 1/3) circle (.3pt);
\draw[fill] (1/3, 2/3) circle (.3pt);
\draw[fill] (2/3, 1/3) circle (.3pt);
\draw[fill] (2/3, 2/3) circle (.3pt);
\end{scope}

\begin{scope}[red]
\draw[fill] (1/3, 0) circle (.3pt);
\draw[fill] (1/3, 1) circle (.3pt);
\draw[fill] (2/3, 0) circle (.3pt);
\draw[fill] (2/3, 1) circle (.3pt);
\draw[fill] (1/6, 1/2) circle (.3pt);
\draw[fill] (5/6, 1/2) circle (.3pt);
\draw[fill] (1/2, 1/6) circle (.3pt);
\draw[fill] (1/2, 5/6) circle (.3pt);
\draw[fill] (0, 1/3) circle (.3pt);
\draw[fill] (1, 1/3) circle (.3pt);
\draw[fill] (0, 2/3) circle (.3pt);
\draw[fill] (1, 2/3) circle (.3pt);
\draw[fill] (1/12, 1/2) circle (.3pt);
\draw[fill] (5/12, 1/2) circle (.3pt);
\draw[fill] (7/12, 1/2) circle (.3pt);
\draw[fill] (11/12, 1/2) circle (.3pt);
\draw[fill] (1/3, 1/4) circle (.3pt);
\draw[fill] (1/3, 3/4) circle (.3pt);
\draw[fill] (2/3, 1/4) circle (.3pt);
\draw[fill] (2/3, 3/4) circle (.3pt);
\draw[fill] (1/12, 1/3) circle (.3pt);
\draw[fill] (5/12, 2/3) circle (.3pt);
\draw[fill] (7/12, 1/3) circle (.3pt);
\draw[fill] (11/12, 2/3) circle (.3pt);
\draw[fill] (1/3, 1/12) circle (.3pt);
\draw[fill] (2/3, 5/12) circle (.3pt);
\draw[fill] (1/3, 7/12) circle (.3pt);
\draw[fill] (2/3, 11/12) circle (.3pt);
\draw[fill] (1/4, 1/3) circle (.3pt);
\draw[fill] (3/4, 1/3) circle (.3pt);
\draw[fill] (1/4, 2/3) circle (.3pt);
\draw[fill] (3/4, 2/3) circle (.3pt);
\draw[fill] (0, 1/12) circle (.3pt);
\draw[fill] (0, 5/12) circle (.3pt);
\draw[fill] (0, 7/12) circle (.3pt);
\draw[fill] (0, 11/12) circle (.3pt);
\draw[fill] (1, 1/12) circle (.3pt);
\draw[fill] (1, 5/12) circle (.3pt);
\draw[fill] (1, 7/12) circle (.3pt);
\draw[fill] (1, 11/12) circle (.3pt);
\end{scope}

\begin{scope}[red]
\draw[fill] (13/18, 1/18) circle (.3pt);
\draw[fill] (11/18, 5/18) circle (.3pt);
\draw[fill] (1/18, 7/18) circle (.3pt);
\draw[fill] (17/18, 11/18) circle (.3pt);
\draw[fill] (7/18, 13/18) circle (.3pt);
\draw[fill] (5/18, 17/18) circle (.3pt);

\draw[fill] (1/18, 13/18) circle (.3pt);
\draw[fill] (5/18, 11/18) circle (.3pt);
\draw[fill] (7/18, 1/18) circle (.3pt);
\draw[fill] (11/18, 17/18) circle (.3pt);
\draw[fill] (13/18, 7/18) circle (.3pt);
\draw[fill] (17/18, 5/18) circle (.3pt);

\draw[fill] (1/18, 1/18) circle (.3pt);
\draw[fill] (5/18, 5/18) circle (.3pt);
\draw[fill] (7/18, 7/18) circle (.3pt);
\draw[fill] (11/18, 11/18) circle (.3pt);
\draw[fill] (13/18, 13/18) circle (.3pt);
\draw[fill] (17/18, 17/18) circle (.3pt);
\end{scope}

\end{tikzpicture}
\end{center}

\subsubsection{$\phi_{1,12}'$}

There are three zones in this case. The locus of potential cuspidality is given by the following table.

\[
\begin{array}{|c|c|}
\hline
0 < \kappa \leq 1 & 1\\
\hline
1 < \kappa \leq 2 & \Phi_6(q_1q_2^{-1}) \Phi_4(q_1q_2^{-1}) \Phi_2(q_1q_2^{-1}) \Phi_1(q_1q_2^{-1})\\
\hline
 &
\Phi_6(q_1q_2^{-1}) \Phi_4(q_1q_2^{-1}) \Phi_2(q_1q_2^{-1}) \Phi_1(q_1q_2^{-1})\\
& (\Phi_6(q_1), \Phi_2(q_2)) (q_1 + q_2 - 1, \phi_6(q_2)) (\Phi_2(q_1), \phi_6(q_2))\\
2 < \kappa& (\Phi_{12}(q_1), \Phi_1(q_2)) (q_1^2 + q_2, \Phi_3(q_2)) (\Phi_4(q_1), \Phi_3(q_2))\\
& (q_1 + q_2^5 - q_2^2, \Phi_{18}(q_2)) (q_1 - q_2^5, \Phi_{18}(q_2)) (q_1 + q_2^2, \Phi_{18}(q_2))\\
\hline
\end{array}
\]

Note that the variety defined by $(1)$ is empty: there cannot be any cuspidal modules when $0<\kappa \leq 1$.

One can deduce the results for $\phi_{1,12}''$ via the diagram automorphism.

%\begin{itemize}
%\item $\kappa \in ]0, 1]$
%
%There cannot be any cuspidal modules for those parameters.
%
%\item $\kappa \in ]1, 2]$
%
%\[
%\begin{array}{l}
%\Phi_6(q_1q_2^{-1}) \Phi_4(q_1q_2^{-1}) \Phi_2(q_1q_2^{-1}) \Phi_1(q_1q_2^{-1}) 
%\end{array}
%\]
%
%\begin{verbatim}
%number of components = 4
%Dimension 1, x^2 - x*y + y^2,
%Dimension 1, x^2 + y^2,
%Dimension 1, x + y,
%Dimension 1, x - y
%\end{verbatim}
%
%\item $\kappa \in ]2, +\infty[$
%
%\[
%\begin{array}{l}
%\Phi_6(q_1q_2^{-1}) \Phi_4(q_1q_2^{-1}) \Phi_2(q_1q_2^{-1}) \Phi_1(q_1q_2^{-1})\\
%(\Phi_6(q_1), \Phi_2(q_2))
%(q_1 + q_2 - 1, \phi_6(q_2))
%(\Phi_2(q_1), \phi_6(q_2))\\
%(\Phi_{12}(q_1), \Phi_1(q_2))
%(q_1^2 + q_2, \Phi_3(q_2))
%(\Phi_4(q_1), \Phi_3(q_2))\\
%(q_1 + q_2^5 - q_2^2, \Phi_{18}(q_2))
%(q_1 - q_2^5, \Phi_{18}(q_2))
%(q_1 + q_2^2, \Phi_{18}(q_2))
%\end{array}
%\]
%
%\begin{verbatim}
%Dimension 1, x^2 - x*y + y^2,
%Dimension 1, x^2 + y^2,
%Dimension 1, x + y,
%Dimension 1, x - y,
%Dimension 0, x^2 - x + 1, y + 1,
%Dimension 0, x + y - 1, y^2 - y + 1,
%Dimension 0, x + 1, y^2 - y + 1,
%Dimension 0, x^4 - x^2 + 1, y - 1,
%Dimension 0, x^2 + y, y^2 + y + 1,
%Dimension 0, x^2 + 1, y^2 + y + 1,
%Dimension 0, x + y^5 - y^2, y^6 - y^3 + 1,
%Dimension 0, x - y^5, y^6 - y^3 + 1,
%Dimension 0, x + y^2, y^6 - y^3 + 1,
%\end{verbatim}
%

\subsubsection{$\phi_{2,4}'$}

\[
\begin{array}{|c|c|}
\hline
0 < \kappa \leq 1/2 & \Phi_6(q_2) \Phi_2(q_2)\\
\hline
%\kappa \in ]1/2, 1]
& \Phi_6(q_2) \Phi_2(q_2)\\
& (\Phi_2(q_1), \Phi_1(q_2)) (\Phi_1(q_1), \Phi_1(q_2)) (q_1 + q_2 + 1, \Phi_3(q_2)) (q_1 - q_2 - 1, \Phi_3(q_2))\\
1/2 < \kappa \leq 1 & (\Phi_2(q_1), \Phi_4(q_2)) (\Phi_1(q_1), \Phi_4(q_2)) (q_1 + q_2^2, \Phi_{12}(q_2)) (q_1 + q_2^2, \Phi_8(q_2))\\
& (q_1 - q_2^2, \Phi_{12}(q_2)) (q_1 - q_2^2, \Phi_8(q_2)) (q_1 + q_2^2, \Phi_{18}(q_2)) (q_1 - q_2^2, \Phi_{18}(q_2))\\
\hline
%\kappa \in ]1, +\infty[ & \Phi_6(q_2) \Phi_2(q_2)\\
& (\Phi_2(q_1), \Phi_1(q_2)) (\Phi_1(q_1), \Phi_1(q_2)) (q_1 + q_2 + 1, \Phi_3(q_2)) (q_1 - q_2 - 1, \Phi_3(q_2))\\
& (\Phi_2(q_1), \Phi_4(q_2)) (\Phi_1(q_1), \Phi_4(q_2)) (q_1 + q_2^2, \Phi_{12}(q_2)) (q_1 + q_2^2, \Phi_8(q_2))\\
1 < \kappa & (q_1 - q_2^2, \Phi_{12}(q_2)) (q_1 - q_2^2, \Phi_8(q_2)) (q_1 + q_2^2, \Phi_{18}(q_2)) (q_1 - q_2^2, \Phi_{18}(q_2))\\
& (\Phi_6(q_1), \Phi_1(q_2)) (q_1 + q_2, \Phi_3(q_2)) (\Phi_2(q_1), \Phi_3(q_2))\\
& (q_1 + q_2^4 - q_2, \Phi_{18}(q_2)) (q_1 - q_2^4 - q_2, \Phi_9(q_2))\\
\hline
\end{array}
\]

One can deduce the results for $\phi_{2,4}''$ via the diagram automorphism.

In \cite{Nor}, Norton deals with the equal parameter case ($\kappa=1$ in our notation). She proves that $L_{(c,c)}(\phi_{2,4}')$ and $L_{(c,c)}(\phi_{2,4}'')$ are finite dimensional for $c=\frac{1}{6}$ and conjectures  they are finite dimensional for $c=\frac{1}{2}$, which is compatible with our results.

\subsubsection{$\phi_{4,1}$}

\[
\begin{array}{|c|c|}
\hline
& \Phi_2(q_1 q_2)\\
0 < \kappa < 1 & 
(\Phi_1(q_1), \Phi_1(q_2))
(\Phi_3(q_1), \Phi_1(q_2))
(\Phi_6(q_1), \Phi_1(q_2))
(q_1 + q_2 + 1, \Phi_3(q_2))\\
& (q_1 + q_2, \Phi_3(q_2))
(q_1 - q_2, \Phi_3(q_2))
(\Phi_2(q_1), \Phi_3(q_2))
(\Phi_1(q_1), \Phi_3(q_2))\\
\hline
%& \Phi_4(q_1 ^2)\\
\kappa = 1 & \Phi_3(q)\Phi_4(q)
%(\Phi_1(q_1), \Phi_1(q_2))
%(\Phi_3(q_1), \Phi_1(q_2))
%(q_1 + q_2 + 1, \Phi_3(q_2))\\
%&
%(q_1 - q_2, \Phi_3(q_2))
%(\Phi_1(q_1), \Phi_3(q_2))
\\
\hline
\end{array}
\]

The ideal for $\kappa > 1$ is obtained from the one for $\kappa < 1$ by swapping $q_1$ and $q_2$.

 As for $\kappa=1$, we intersected the potential cuspidality locus with the diagonal $q_1=q_2$ and denoted by $q$ this common value. From our condition we deduce that if $L_{(c,c)}(\phi_{2,4}')$ is finite dimensional, then $4c\in\mathbb{Z}_{>0}$ or $3c\in\mathbb{Z}_{>0}$. By \cite{Nor}, $L_{(c,c)}(\phi_{2,4}')$ is finite dimensional if and only if the denominator of $c$ is exactly $4$ or $3$.

%Note that the ideal for $\kappa = 1$ can be written more compactly $(q_1 q_2 + 1)(q_1^3 - 1, q_2^3 -1)$.

\subsubsection{$\phi_{4,7}'$}

\[
\begin{array}{|c|c|}
\hline
0 < \kappa \leq 1 & 1\\
\hline
1 < \kappa & \Phi_2(q_1 q_2^{-1})\\
\hline
\end{array}
\]

One can deduce the results for $\phi_{4,7}''$ via the diagram automorphism.

\subsubsection{$\phi_{8,3}'$}

\[
\begin{array}{|c|c|}
\hline
0 < \kappa \leq 2 & 1\\
\hline
2 < \kappa & (\Phi_2(q_1), \Phi_2(q_2)) (\Phi_1(q_1), \Phi_2(q_2))
(\Phi_4(q_1), \Phi_1(q_2)) (q_1 - q_2 + 1, \Phi_6(q_2))\\
\hline
\end{array}
\]

One can deduce the results for $\phi_{8,3}''$ via the diagram automorphism.

\subsubsection{$\phi_{9,2}$}

\[
\begin{array}{|c|c|}
\hline
0 < \kappa \leq 1/2 & 1\\
\hline
1/2 < \kappa < 2 & (\Phi_2(q_1), \Phi_2(q_2)) (q_1 + q_2 - 1, \Phi_6(q_2))\\
\hline
2 < \kappa & 1\\
\hline
\end{array}
\]

From the above table, we see that in the case of equal parameters ($\kappa=1$) the module $L_{(c,c)}(\phi_{9,2})$ may be finite dimensional only if $2c\in \mathbb{Z}_{>0}$. This is compatible with \cite{Nor}, where it is conjectured that  $L_{(\frac{1}{2},\frac{1}{2})}(\phi_{9,2})$ has finite dimension.

\vspace{1cm}

\begin{center}%
    \begin{figure} \setlength{\unitlength}{1.20cm}%
    %\delimitershortfall-1sp% Just for the nested braces
\scalebox{1.18}{  \begin{picture}(8.75,6.85)
\put(8.65,-0.3){\makebox(0,0){\small $c_1$}}
 \put(0,0){\vector(1,0){8.75}}
\put(-0.3,6.5){\makebox(0,0){\small $c_2$}}
 \put(0,0){\vector(0,1){6.5}}
\thicklines

 \put(0,0){\color{purple}\line(2,1){7.7}}
\put(8.4,4.2){\makebox(0,0){\rotatebox{27}{\color{purple}\tiny{$c_2=\frac{1}{2}c_1$}}}}

 \put(0,0){\color{green}\line(1,1){5.5}}
\put(6,6){\color{green}\makebox(0,0){\rotatebox{45}{\tiny{$c_2=c_1$}}}}

 \put(0,0){\color{red}\line(1,2){2.8}}
\put(3.1,6.25){\makebox(0,0){\color{red}\rotatebox{65}{\tiny{$c_2\!=2 c_1$}}}}

\put(.7,6.6){\makebox(0,0){$\small\phi_{1,0}$}}
\put(.7,6.0){\makebox(0,0){$\small\color{green}\phi_{1,12}'$}}
\put(.7,5.4){\makebox(0,0){$\small\phi_{2,4}'$}}
\put(.7,4.8){\makebox(0,0){$\small\phi_{2,4}''$}}
\put(.7,4.2){\makebox(0,0){$\small\phi_{4,1}$}}
\put(.7,3.6){\makebox(0,0){$\small\color{green}\phi_{4,7}'$}}
\put(.7,3.0){\makebox(0,0){$\small\color{red}\phi_{8,3}'$}}

\put(4.5,6.6){\makebox(0,0){$\small\phi_{1,0}$}}
\put(4.1,6.0){\makebox(0,0){$\small\color{green}\phi_{1,12}'$}}
\put(3.7,5.4){\makebox(0,0){$\small\phi_{2,4}'$}}
\put(3.3,4.8){\makebox(0,0){$\small\phi_{2,4}''$}}
\put(2.9,4.2){\makebox(0,0){$\small\phi_{4,1}$}}
\put(2.5,3.6){\makebox(0,0){$\small\color{green}\phi_{4,7}'$}}
\put(2.1,3.0){\makebox(0,0){$\small\color{red}\phi_{9,2}$}}

\put(6.6, 4.5){\makebox(0,0){$\small\phi_{1,0}$}}
\put(6.0, 4.1){\makebox(0,0){$\small\color{green}\phi_{1,12}''$}}
\put(5.4, 3.7){\makebox(0,0){$\small\phi_{2,4}'$}}
\put(4.8, 3.3){\makebox(0,0){$\small\phi_{2,4}''$}}
\put(4.2, 2.9){\makebox(0,0){$\small\phi_{4,1}$}}
\put(3.6, 2.5){\makebox(0,0){$\small\color{green}\phi_{4,7}''$}}
\put(3.0, 2.1){\makebox(0,0){$\small\color{purple}\phi_{9,2}$}}

\put(9, .7){\makebox(0,0){$\small\phi_{1,0}$}}
\put(8, .7){\makebox(0,0){$\small\color{green}\phi_{1,12}''$}}
\put(7, .7){\makebox(0,0){$\small\phi_{2,4}'$}}
\put(6, .7){\makebox(0,0){$\small\phi_{2,4}''$}}
\put(5, .7){\makebox(0,0){$\small\phi_{4,1}$}}
\put(4, .7){\makebox(0,0){$\small\color{green}\phi_{4,7}''$}}
\put(3, .7){\makebox(0,0){$\small\color{purple}\phi_{8,3}''$}}

    \end{picture}
    }\caption{$F_4$: Potential finite dimensional simples in each region}
\end{figure}
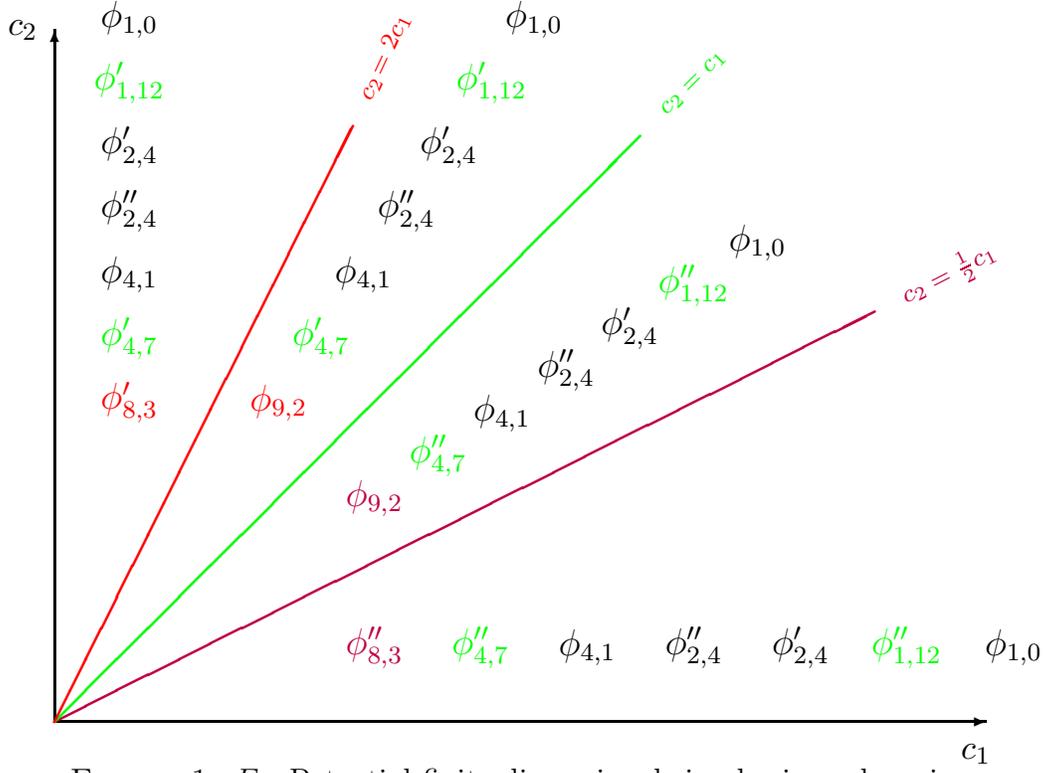
\end{center}

%Dimension 0, q_1 + 1, q_2 - 1,
%Dimension 0, q_1 - 1, q_2 - 1,
%Dimension 0, q_1^2 - q_1 + 1, q_2 - 1,
%Dimension 0, q_1 + q_2 + 1, q_2^2 + q_2 + 1,
%Dimension 0, q_1 + q_2, q_2^2 + q_2 + 1,
%Dimension 0, q_1 - q_2 - 1, q_2^2 + q_2 + 1,
%Dimension 0, q_1 + 1, q_2^2 + q_2 + 1,
%Dimension 0, q_1 + 1, q_2^2 + 1,
%Dimension 0, q_1 - 1, q_2^2 + 1,
%Dimension 0, q_1 + q_2^2, q_2^4 - q_2^2 + 1,
%Dimension 0, q_1 + q_2^2, q_2^4 + 1,
%Dimension 0, q_1 - q_2^2, q_2^4 - q_2^2 + 1,
%Dimension 0, q_1 - q_2^2, q_2^4 + 1,
%Dimension 0, q_1 + q_2^4 - q_2, q_2^6 - q_2^3 + 1,
%Dimension 0, q_1 - q_2^4 - q_2, q_2^6 + q_2^3 + 1,
%Dimension 0, q_1 + q_2^2, q_2^6 + q_2^3 + 1,
%Dimension 0, q_1 - q_2^2, q_2^6 - q_2^3 + 1,

%\item $\kappa \in ]0, 1/2]$
%
%\begin{verbatim}
%number of components = 2
%(y^2 - y + 1,
%Dimension 1, y + 1,
%\end{verbatim}
%
%\[
%\Phi_6(q_2) \Phi_2(q_2)
%\]
%
%\item $\kappa \in ]1/2, 1]$
%
%\item $\kappa \in ]1, +\infty[$
%
%\end{itemize}
%
%
%\end{itemize}

\subsection{$G=W(E_6)$}
\begin{enumerate}
\item[(1)] If $L_c(\mathrm{triv})$ is finite dimensional then $12c \in \ZZ_{>0}$ or $9c \in \ZZ_{>0}$. According to \cite{Eti} and \cite{VaVa}, $L_c(\mathrm{triv})$ is finite dimensional if and only if the denominator of $c$ belongs to the set of elliptic numbers $\{3,6,9,12 \}$.
\item[(2)] If $L_c(V)$ is finite dimensional then $6c \in \ZZ_{>0}$. 
\item[(3)] If $L_c(\wedge^2 V)$ is finite dimensional then $3c \in \ZZ_{>0}$.
\end{enumerate}
 By \cite{Nor} $L_c(V)$  is finite dimensional if and only if the denominator of $c$ is exactly $6$ or $3$  and $L_c(\wedge^2 V)$ is finite dimensional if and only if $c$ has denominator $3$.

\subsection{$G=W(E_7)$}
\begin{enumerate}
\item[(1)] If $L_c(\mathrm{triv})$ is finite dimensional then  $14c \in \ZZ_{>0}$ or  $18c \in 2\ZZ_{\geq 0}+1$. By \cite{Eti} and \cite{VaVa}, $L_c(\mathrm{triv})$ is finite dimensional if and only if the denominator of $c$ is an elliptic number: $2,6,14$ or $18$.
\item[(2)] If $L_c(V)$ is finite dimensional, then $6c \in \ZZ_{>0}$ or $10c \in 2\ZZ_{\geq 0}+1$. 
\item[(3)] If $L_c(\phi_{15,7})$ is finite dimensional, then  $2c \in \ZZ_{>0}$ or $6c \in 2\ZZ_{\geq 0}+1$.
\item[(4)] If $L_c(\phi_{21,6})$ is finite dimensional, then $6c \in \ZZ_{>0}$.
\item[(5)] If $L_c(\phi_{27,2})$ or $L_c(\phi_{35,13})$ or $L_c(\phi_{189,5})$ is finite dimensional then $2c \in 2\ZZ_{\geq 0}+1$.
\end{enumerate}
According to \cite{Nor}, $L_{\frac{1}{d}}(V)$ is finite dimensional if $d= 6$ or $10$, while $L_{\frac{1}{d}}(\phi_{15,7})$ and  $L_{\frac{1}{d}}(\phi_{21,6})$  are finite dimensional if $d= 6$.

\subsection{$G=W(E_8)$}
\begin{enumerate}
\item[(1)] If $L_c(\mathrm{triv})$ is finite dimensional, then $30c \in \ZZ_{>0}$ or $24c \in \ZZ_{>0}$ or $20c \in \ZZ_{>0}$. By \cite{Eti} and \cite{VaVa}, $L_c(\mathrm{triv})$ is finite dimensional if and only if the denominator of $c$ is an elliptic number, that is, belongs to the set $\{2,3,4,5,6,8,10,12,15,20,24,30\}$. These are precisely the integers greater than $1$ that divide one of $20,24$, or $30$.
\item[(2)] If $L_c(V)$ is finite dimensional, then $18c \in 2\ZZ_{\geq 0}+1$ or $30c \in \ZZ_{>0}$.
\item[(3)] If $L_c(\phi_{28,8})$ is finite dimensional, then $10c \in \ZZ_{>0}$ or $12c \in \ZZ_{>0}$ or $18c \in 2\ZZ_{\geq 0}+1$.
\item[(4)] If $L_c(\phi_{35,2})$ or $L_c(\phi_{50,8})$ is finite dimensional, then $12 c \in \ZZ_{>0}$.
\item[(5)] If $L_c(\phi_{56,19})$ is finite dimensional, then $6c \in 2\ZZ_{\geq 0}+1$ or $5 c \in \ZZ_{>0}$.
\item[(6)] If $L_c(\phi_{160,7})$ is finite dimensional, then  $3c \in \ZZ_{>0}$ or $8c \in 2\ZZ_{\geq 0}+1$.
\item[(7)] If $L_c(\phi_{175,12})$ or  $L_c(\phi_{300,8})$ is finite dimensional, then $6c \in \ZZ_{>0}$.
\item[(8)] If $L_c(\phi_{210,4})$  is finite dimensional, then $4c\in \ZZ_{>0}$ or $6c \in 2\ZZ_{\geq 0}+1$.
\item[(9)] If $L_c(\phi_{350,14})$ is finite dimensional, then $4c \in 2\ZZ_{\geq 0}+1$.
\item[(10)] If $L_c(\phi_{840,13})$ is finite dimensional, then $3c \in \ZZ_{>0}$.
\item[(11)] If $L_c(\phi_{560,5})$ or $L_c(\phi_{840,14})$ or $L_c(\phi_{1050,10})$ or  $L_c(\phi_{1400,8})$  then $2c \in 2\ZZ_{\geq 0}+1$.
\end{enumerate}
In this case, applying the stronger version of our necessary condition not only provides a restriction  on the set of the parameters $c$ for which a given module can be of  finite dimension, but also allows us to eliminate the module $L_c(\phi_{1575, 10})$, for any $c$, which could not be discarded by using the weaker version.

\subsection{Some exceptional complex examples}  For illustration, we discuss three examples of complex reflection groups for which we have applied the strong version of our criterion for $c$ constant and positive. 
As in the previous sections, we list only modules which can be finite dimensional; any irreducible not appearing in our lists cannot be the lowest weight of a finite dimensional module.

\subsection{$G=G_{4}$} 
\begin{enumerate}
\item[(1)] If $L_c(\textrm{triv})$ is finite dimensional, then the denominator of $c$ has to be $6$, $4$ or $2$. %$4c\in \mathbb{Z}_{> 0}$ or $c=\frac{m}{6}\in\mathbb{Z}_{>0}$ for $m\in\{1,5\mod6\}$.
\item[(2)] If $L_c(\phi_{2,3})$ or $L_c(\phi_{2,1})$  is finite dimensional, then $2c\in2\mathbb{Z}_{\geq 0}+1$.
\end{enumerate}

\subsection{$G=G_{12}$} 
\begin{enumerate}
\item[(1)] If $L_c(\textrm{triv})$ is finite dimensional, then $8c\in \mathbb{Z}_{> 0}$ or $12c\in\mathbb{Z}_{>0}$. In \cite{BaPo} it is shown that $L_c(\textrm{triv})$ is finite dimensional if $c=\frac{m}{12}$, with $m \equiv 1, 3, 4, 5, 6, 7, 8, 9, 11 \mod 12$.
\item[(2)] If $L_c(\phi_{2,4})$  is finite dimensional, then  $2c\in2\mathbb{Z}_{\geq 0}+1$. According to \cite{BaPo}, this condition is also sufficient.
\item[(3)] If $L_c(\phi_{2,5})$ or $L_c(\phi_{2,1})$ or $L_c(\phi_{4,3})$  is finite dimensional, then $4c\in 2\mathbb{Z}_{\geq 0}+1$ . In \cite{BaPo} is proved that  $L_c(\phi_{2,5})$, resp.  $L_c(\phi_{2,1})$,  is finite dimensional if $c=\frac{m}{4}$, with $m\equiv 1,3 \mod 8$, resp. $m\equiv 5,7 \mod 8$,  while $L_c(\phi_{4,3})$ is always infinite dimensional.
\item[(4) ] If $L_c(\phi_{3,2})$ is finite dimensional, then $4c\in \mathbb{Z}_{> 0}$. By \cite{BaPo}, $L_c(\phi_{3,2})$ is finite dimensional if $4c\in 2\mathbb{Z}_{\geq 0}+1$.
\end{enumerate}

\subsection{$G=G_{24}$} 
\begin{enumerate}
\item[(1)] If $L_c(\textrm{triv})$  is finite dimensional, then $21c\in\mathbb{Z}_{>0}$ or $c=\frac{m}{18}$ with $m\equiv 1,3,5,6,7,9,11,12,13,15,17\mod 18$.
\item[(2)] If $L_c(\phi_{3,3})$ or $L_c(\phi_{3,1})$   is finite dimensional, then $7c\in\mathbb{Z}_{>0}$ or $2c\in 2\mathbb{Z}_{\geq 0}+1$.
\item[(3)] If $L_c(\phi_{6,2})$ is finite dimensional, then $7c\in\mathbb{Z}_{>0}$ or $4c\in 2\mathbb{Z}_{\geq 0}+1$.
\item[(4)] If $L_c(\phi_{7,3})$ is finite dimensional, then $3c\in\mathbb{Z}_{>0}$.
\end{enumerate}

\def\cprime{$'$} \def\cprime{$'$}


\begin{thebibliography}{10}
\bibitem[BaPo]{BaPo} M. Balagovic and C. Policastro, \emph{Category ${\cal O}$ for the rational Cherednik algebra associated to the complex reflection group $G_{12}$}, Journal of Pure and Applied Algebra, Volume 216, Issue 4, April 2012, Pages 857--875, arXiv:1011.4126

\bibitem[BaPu]{BaPu} M. Balagovic and A. Puranik, \emph{Irreducible representations of the rational Cherednik algebra associated to the Coxeter group $H_3$}, arXiv:1004.2108

\bibitem[BeGi]{BeGi} G. Bellamy and V. Ginzburg, \emph{Hamiltonian reduction and nearby cycles for mirabolic D-modules},  arXiv:1207.1797

\bibitem[BEG]{BEG} Y. Berest, P. Etingof and V. Ginzburg, \emph{Finite-dimensional representations of rational Cherednik algebras}, Int. Math. Res. Not. 2003, no. 19, 1053--1088

\bibitem[BGS]{BGS} C. Berkesch, S. Griffeth and S. Sam, \emph{Jack polynomials as fractional quantum Hall states and the Betti numbers of the (k+1)-equals ideal},  arXiv:1303.4126, to appear in Comm. Math. Phys.

\bibitem[BeEt]{BeEt} R. Bezrukavnikov and P. Etingof, \emph{Parabolic induction and restriction functors for rational Cherednik algebras}, Selecta Math. (N.S.) 14 (2009), no. 3-4, 397--425

\bibitem[BCP] {MAGMA}W. Bosma, J. Cannon and C. Playoust, \emph{The Magma algebra system. I. The user language}, J. Symbolic Comput., 24 (1997), 235--265. 

\bibitem[BMR]{BMR} M. Brou\'e, G. Malle and R. Rouquier, \emph{Complex reflection groups, braid groups, Hecke algebras}, J. Reine Angew. Math. 500 (1998), 127--190

\bibitem[Chm]{Chm} T. Chmutova, \emph{Representations of the rational Cherednik algebras of dihedral type}, J. Algebra 297 (2006), no. 2, 542--565

\bibitem[Dun1]{Dun1} C.F. Dunkl, \emph{Singular polynomials for the symmetric groups}, Int. Math. Res. Not., 2004, no. 67, 3607-3635.

\bibitem[Dun2]{Dun2} C.F. Dunkl, \emph{Singular polynomials and modules for the symmetric groups},  Int. Math. Res. Not., 2005, no. 39, 2409-2436.

\bibitem[DJO]{DJO}C.F. Dunkl, M.F.E. de Jeu, E.M. Opdam, \emph{Singular polynomials for finite reflection
groups}, Trans. Amer. Math. Soc. 346 (1994), 237--256.

\bibitem[Eis]{Eis} D. Eisenbud, \emph{Commutative algebra. With a view toward algebraic geometry.} Graduate Texts in Mathematics, \textbf{150}. Springer-Verlag, New York, 1995. xvi+785 pp.

\bibitem[Eti]{Eti} P. Etingof, \emph{Supports of irreducible spherical representations of rational Cherednik algebras of finite Coxeter groups}, Adv. Math. 229 (2012), no. 3, 2042--2054

\bibitem[Eti2]{Eti2} P. Etingof, \emph{Cherednik and Hecke algebras of varieties with a finite group action}, arXiv:math/0406499

\bibitem[EtGi]{EtGi} P. Etingof and V. Ginzburg, \emph{Symplectic reflection algebras,
  {C}alogero-{M}oser space, and deformed {H}arish-{C}handra homomorphism},
  Invent. Math. \textbf{147} (2002), no.~2, 243--348.

\bibitem[EtMo]{EtMo} P. Etingof and S. Montarani, \emph{Finite dimensional representations of symplectic reflection
algebras associated to wreath products}, Represent. Theory 9 (2005), 457–467 (electronic).

\bibitem[GoLo]{GoLo} I. Gordon and I. Losev, \emph{On category O for cyclotomic rational Cherednik algebras }, arXiv:1109.2315

\bibitem[GoSt]{GoSt} I. Gordon and T. Stafford, \emph{Rational Cherednik algebras and Hilbert schemes}, Adv. Math. 198 (2005), no. 1, 222--274.

\bibitem[GoMa]{GoMa} I. Gordon and M. Martino, \emph{Monodromy of partial KZ functors for rational Cherednik algebras}, Symmetries, Integrable Systems and Representations, Springer Proceedings in Mathematics and Statistics, Vol. 40 (2013), 133--154

\bibitem[Gri]{Gri} S. Griffeth, \emph{Orthogonal functions generalizing Jack polynomials}, Trans. Amer. Math. Soc. 362 (2010), 6131--6157

\bibitem[How]{How} R. Howlett, \emph{Normalizers of parabolic subgroups of reflection groups}, J. London Math. Soc. (2) 21 (1980), no. 1, 62?80

\bibitem[Los]{Los} I. Losev,  \emph{On isomorphisms of certain functors for Cherednik algebras}, Represent. Theory 17 (2013), 247--262

\bibitem[Mur]{Mur} K. Muraleedaran, \emph{Normalizers of parabolic subgroups in unitary reflection groups}, PhD thesis, University of Sydney, 2005

\bibitem[Nor]{Nor}E. Norton, \emph{Irreducible representations of rational Cherednik algebras for exceptional Coxeter groups, Part I}, preprint 2014, arXiv:1411.7990

\bibitem[Opd]{Opd} E. Opdam, \emph{Complex reflection groups and fake degrees}, preprint 1998,  arXiv:math/9808026

\newcommand{\etalchar}[1]{$^{#1}$}

\bibitem[S{\etalchar{+}}97]{GAP3}
            M. Sch{\accent127 o}nert et~al.
\newblock {\em {GAP} --
            {Groups}, {Algorithms}, and {Programming} --
            version 3 release 4 patchlevel 4"}.
\newblock Lehrstuhl D f{\accent127 u}r Mathematik,
            Rheinisch Westf{\accent127 a}lische
            Technische Hoch\-schule, Aachen, Germany, 1997.

\bibitem[Sha]{Sha} P. Shan, \emph{Crystals of Fock spaces and cyclotomic rational double affine Hecke algebras}, Ann. Sci. Ec. Norm. Sup'er. (4) 44 (2011), no. 1, 147--182

\bibitem[ShVa]{ShVa} P. Shan and E. Vasserot, \emph{Heisenberg algebras and rational double affine Hecke algebras}, J. Amer. Math. Soc. 25 (2012), no. 4, 959--1031

\bibitem[Ste]{Ste} R. Steinberg, \emph{Differential equations invariant under finite reflection groups}, Trans. Amer. Math. Soc. 112 1964 392--400

\bibitem[VaVa]{VaVa} M. Varagnolo and E. Vasserot, \emph{Finite-dimensional representations of DAHA and affine Springer fibers: the spherical case},
Duke Math. J. 147 (2009), no. 3, 439--540

\bibitem[Wil]{Wil} S. Wilcox, \emph{Supports of representations of the rational Cherednik algebra of type A}, arXiv:1012.2585


\end{thebibliography}
\end{document}